\numberwithin{equation}{section}
\newcommand{\T}{\mathbb{T}}
\newcommand{\K}{\mathbb{K}}
\newcommand{\N}{\mathbb{N}}
\newcommand{\E}{\mathbb{E}}
\newcommand{\F}{\mathbb{F}}
\newcommand{\C}{\mathbb{C}}
\newcommand{\D}{\mathbb{D}}
\newcommand{\Z}{\mathbb{Z}}
\newcommand{\cJ}{{\mathcal{J}}}
\newcommand{\cI}{{\mathcal{I}}}
\newcommand{\cA}{{\mathcal{A}}}
\newcommand{\cH}{{\mathcal{H}}}
\newcommand{\cK}{{\mathcal{K}}}
\newcommand{\cL}{{Lip_\omega}}
\newtheorem{thm}{\bf Theorem}[]
\newtheorem{coro}{\bf Corollary}
\newtheorem{prop}[thm]{\bf Proposition}
\newtheorem{lem}{\bf Lemma}
\begin{document}
\title{Weighted Big Lipschitz algebras of analytic functions and closed ideals.}
\date{\today}
\subjclass[2000]{primary 46E20, secondary 30C85, 47A15.}
\author[B. Bouya and M. Zarrabi]{Brahim Bouya and Mohamed Zarrabi}

\address{B. BOUYA \\
Madretschstr.52, CH-2503, Biel.}
\email{brahimbouya@gmail.com}

\address{M. ZARRABI \\
Institut De Math\'ematiques de Bordeaux (IMB), Universit\'e Bordeaux,
351 cours de la lib\'eration, 33405 Talence, France.}
\email{Mohamed.Zarrabi@math.u-bordeaux1.fr}

\thanks{This paper was partially supported by ANR FRAB: ANR-09-BLAN-0058-02}
\thanks{The first author wishes to thank the ICTP of Trieste, where a part of this paper was achieved during his stay as a post-doctoral researcher in mathematics section.}

\maketitle

\begin{abstract}
{We give the smallest closed ideal with given hull and inner factor for some weighted big Lipschitz algebras of analytic functions.}
\end{abstract}

\section{\bf Introduction.}

Let $\cH^{\infty}$ be the space of all bounded analytic functions on the open unit disk $\D$ of the complex plane.
Various authors have characterized completely the structure of the closed ideals in some separable Banach algebras of analytic functions in $\D$, we refer the reader for instance to \cite{Bou1,Kor,Mat,Sha,Shi} and some references therein.  They have proved that this structure is standard in the sense of that given by Beurling and Rudin in the usual disk algebra $\cA(\D)$, of functions $f\in\cH^{\infty}$ that are continuous up to the boundary $\T,$ see  \cite[Page. 85]{Hof}  and \cite{Rud}. However, the structure of closed ideals in nonseparable Banach algebras of analytic functions seems much more difficult to characterize,  we can see for instance \cite{BZ, GIR, Hed, Hof2, Ped, She}.
In this paper we are interested in the description of the closed ideals in the following weighted big Lipschitz algebra
$$\cL(\D):=\Big\{f\in\cH^\infty\text{ :
}\sup_{z,w\in\D\atop z\neq w}\frac{|f(z)-f(w)|}{\omega(|z-w|)}<+\infty\Big\}, $$ where $\omega\neq0$ is a {\it modulus of
continuity}, i.e., a nonnegative nondecreasing continuous function
on $[0,2]$ with $\omega(0)=0$ and  $t\mapsto \omega(t)/t$ is  nonincreasing.
Our main result is stated in Theorem \ref{main1} below.

It is well-known that $\cL$ is a nonseparable commutative Banach algebra when equipped with the norm
$$\|f\|_{\omega,\D}:=\|f\|_\infty + \sup_{z,w\in\D\atop z\neq w}\frac{|f(z)-f(w)|}{\omega(|z-w|)},$$
where  $\|f\|_\infty:=\sup\limits_{z\in\D}|f(z)|$ is the supremum norm.
Similarly the weighted big Lipschitz algebra $\cL(\T)$ is defined by
$$\cL(\T):=\Big\{f\in\cA(\D)\text{ :
}\sup_{
z,w\in\T\atop z\neq w}\frac{|f(z)-f(w)|}{\omega(|z-w|)}<+\infty\Big\}.$$
Tamrazov \cite{Tam} has shown that the
algebras $\cL(\D)$ and $\cL:=\cL(\T)$ coincide for any
modulus of continuity $\omega,$ see also \cite[Appendix A]{Bou1}. Furthermore, the norms $\|f\|_{\omega,\D}$ and $\|f\|_\omega:=\|f\|_{\omega,\T}$ are equivalent, where
$$\|f\|_{\omega,\T}:=\|f\|_\infty + \sup_{z,w\in\T\atop z\neq w}\frac{|f(z)-f(w)|}{\omega(|z-w|)}.$$
In what follows, we denote by  $\E$ a closed subset of $\T$ of Lebesgue measure zero and by $U\in\cH^{\infty}$  an inner function   such that $\sigma(U)\cap\T\subseteq \E,$ where
$$
\sigma(U)  :=\{\lambda\in\overline{\D}\ :\ \liminf_{z\rightarrow\lambda\atop z\in\D }|U(z)|=0\},\\
$$
is called the spectrum of $U,$   see for instance \cite[Pages. 62-63]{Nik}.
It is known that $\sigma(U)=\overline {\Z_U} \cup {\text{supp}} (\mu_{_U}),$
where  $\Z_U$ is the zero set in $\D$ of $U$ and ${\text{supp}} (\mu_{_U})$ is the closed support of the singular measure $\mu_{_U}$  associated with the singular part of $U.$

For a closed ideal $\cI$  of $\cL,$ we denote by
$U_{_\cI}$  the greatest inner common divisor of the inner parts of
functions in $\cI\setminus\{0\}$  and by $\E_{_{\cI}}$ the standard hull of $\cI$, that is
$$\E_{_{\cI}}:=\{\xi\in\T\text{ : }f(\xi)= 0 ,\ \forall f\in\cI\},$$
see \cite[Page. 85]{Hof} and \cite{Rud}.
We define
$$\cI_{\omega}(\E, U):=\{f\in\cL\mbox{ :
} f_{|\E}\equiv0\mbox{ and
}f/U\in\cH^{\infty}\},$$
and
$$\cJ_{\omega}(\E, U):=\Big\{f\in\cI_{\omega}(\E, U)\text{ : }
\lim_{\delta\rightarrow 0}\sup_{\xi,\zeta\in\E(\delta)\atop \xi\neq \zeta}\frac{|f(\xi)-f(\zeta)|}{\omega(|\xi-\zeta|)}=0\Big\},$$
where
$$\E(\delta):=\{\xi\in\T\ :\ d(\xi,\E)\leq\delta\} ,\qquad \delta>0,$$
and  $d(z,\E)$ designs the Euclidian distance from the point $z\in\overline{\D}$ to the set $\E.$
It should be mentioned that for any $\omega,$ the closed ideal $\cJ_{\omega}(\E, U)$ coincides with
$$\cK_{\omega}(\E, U):=\Big\{f\in\cI_{\omega}(\E, U)\ :\
\lim_{\delta\rightarrow 0}\sup_{d(z,\E),d(w,\E)\leq\delta\atop z,w\in\D \text{ and } z\neq w}\frac{|f(z)-f(w)|}{\omega(|z-w|)}=0\Big\},$$
as it is obtained in Proposition \ref{coincide} of section \ref{sect51} below. For $0<\alpha\leq1,$ we set
\begin{equation*}\label{f11}
\omega_\alpha(t):=t^\alpha, \qquad 0\leq t\leq 2.
\end{equation*}
A classical result due to Carleson \cite{Car} says that $\E$  is a boundary zero set of a function  $f\in Lip_{\omega_\alpha}\setminus\{0\}$ if and only if
\begin{equation}\label{f2}
\int_{\T} \log d(\xi,\E)|d\xi|>-\infty.
\end{equation}
In this case  $\E$ is called a Beurling-Carleson set. Recently, it is proved \cite[Theorem 1]{BZ} that if $\cI$ is a closed ideal of $Lip_{\omega_\alpha},$ then
\begin{equation}\label{f1}
\cJ_{\omega_\alpha}(\E_{_\cI}, U_{_\cI})\subseteq\cI.
\end{equation}
The proof of this result uses the following stronger property: If $\E$ is a Beurling-Carleson set then there exists a function $f$ holomorphic in $\D$ and continuously differentiable in $\overline{\D}$ such that both $f$ and $f'$ vanish exactly on $\E$.
On the other hand, Shirokov \cite[Chapter III]{Shi} has shown that generally the boundary zero set of a function  $f\in Lip_{\omega}$ does not necessarily satisfy \eqref{f2} as in the case of $Lip_{\chi_{_\beta}},$ where
\begin{equation*}\label{f3}
\chi_{_\beta}(t):=\log^{-\beta}\big(\frac{2e^{\beta}}{t}\big), \qquad 0< t\leq 2,
\end{equation*}
and $\beta>0$ is a fixed real number. According to these facts, we see that the method used in \cite[Proof of Theorem 1]{BZ} to prove \eqref{f1} does not work for $Lip_{\chi_{_\beta}}.$

A result of type \eqref{f1} was stated first by H. Hedenmalm \cite{Hed} in the algebras $\cH^{\infty}$ and $Lip_{\omega_1},$ for  closed ideals $\cI$  such that $\E_{_\cI}$ is a single point. Later, T. V. Pederson \cite{Ped} has obtained the same result in $Lip_{\omega_\alpha},$ for  closed ideals $\cI$  such that $\E_{_\cI}$ is countable. In this paper we are interested in the case where $\omega$ satisfies the following condition
\begin{equation}\label{cond}
\inf_{0< t\leq 1}\frac{\omega(t^2)}{\omega(t)}=:\eta_\omega>0.
\end{equation}

We obtain the following theorem which will be proved in the next section.

\begin{thm}\label{main1}
Let $\cI\subseteq\cL$ be a closed ideal, where $\omega$ is a modulus of continuity satisfying \eqref{cond}.
Then $\cJ_{\omega}(\E_{_\cI}, U_{_\cI})\subseteq\cI.$
\end{thm}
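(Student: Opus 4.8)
The plan is to approximate every $f\in\cJ_{\omega}(\E_{_\cI},U_{_\cI})$ in the norm $\|\cdot\|_{\omega}$ by elements of $\cI$ and then to use that $\cI$ is closed. Set $\E:=\E_{_\cI}$ and $U:=U_{_\cI}$. Two reductions prepare the ground. First, by Proposition \ref{coincide} we have $\cJ_{\omega}(\E,U)=\cK_{\omega}(\E,U)$, so the defining little-oh condition for $f$ may be used in its \emph{interior} form, namely the decay of $|f(z)-f(w)|/\omega(|z-w|)$ for $z,w\in\D$ tending to $\E$; this interior formulation is what makes the analytic estimates on $\D$ below available. Second, invoking Shirokov's division property of $\cL$ (division by an inner factor preserves $\cL$ with control of the norm, \cite{Shi}) together with a standard argument that combines countably many elements of $\cI$, we may fix a single function $g\in\cI$ whose inner part is exactly $U$ and whose boundary zero set is exactly $\E$, and write $g=U\Phi$ with $\Phi$ outer.

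The decisive step is to arrange that the outer factor satisfies $\Phi\in\cL$ and $|\Phi(\xi)|\asymp\omega\big(d(\xi,\E)\big)$ on $\T$; its existence is exactly where hypothesis \eqref{cond} is used. Iterating $\omega(t^{2})\ge\eta_{\omega}\,\omega(t)$ gives
\[
\omega\big(t^{2^{n}}\big)\ \ge\ \eta_{\omega}^{\,n}\,\omega(t),\qquad 0<t\le1,\ n\ge1,
\]
so that $\omega$ varies slowly and decays at most like a negative power of $\log(1/t)$. Because $\E=\E_{_\cI}$ is the hull of a nonzero closed ideal it is a boundary zero set of some element of $\cL\setminus\{0\}$, and Shirokov's characterisation of $\cL$-zero sets then yields $\log\omega\big(d(\cdot,\E)\big)\in L^{1}(\T)$, the $\omega$-analogue of \eqref{f2}. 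Hence the outer function $\Phi$ with the prescribed modulus is well defined \emph{even when $\E$ fails to be a Beurling--Carleson set}. This is precisely the point at which the method departs from \cite{BZ}: there the proof of \eqref{f1} required \eqref{f2} and a $C^{1}$ function vanishing to first order on $\E$, which is unavailable for moduli such as $\chi_{_\beta}$; the slow variation coming from \eqref{cond} replaces that device.

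With $g=U\Phi\in\cI$ fixed, I would introduce a family of outer multipliers $(\phi_{\delta})_{\delta>0}$, normalised so that $\phi_{\delta}\to1$ as $\delta\to0$, with $|\phi_{\delta}|\equiv1$ on $\{\xi:d(\xi,\E)\ge\delta\}$ and $\phi_{\delta}$ decreasing to order $\omega\big(d(\cdot,\E)\big)/\omega(\delta)$ on $\E(\delta)$. The construction is arranged so that near $\E$ the product $\phi_{\delta}f$ vanishes faster than $g$, whence $\phi_{\delta}f/g\in\cL$ and therefore $\phi_{\delta}f=g\cdot(\phi_{\delta}f/g)\in\cI$. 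It then remains to establish
\[
\|\phi_{\delta}f-f\|_{\omega}=\|(\phi_{\delta}-1)f\|_{\omega}\longrightarrow 0\qquad(\delta\to0),
\]
after which the closedness of $\cI$ gives $f\in\cI$, as desired.

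The main obstacle is this final convergence, which amounts to a Lipschitz-seminorm estimate for the product $(\phi_{\delta}-1)f$. I would split pairs $z,w\in\D$ according to whether both, one, or neither of them lies in a shrinking neighbourhood of $\E$. Where both points are far from $\E$ the factor $\phi_{\delta}-1$ is uniformly small and regular, and those contributions are routine; the delicate region is where $z$ and $w$ are close to $\E$. There one expands the increment by the Leibniz splitting and bounds the two resulting terms using, on one side, the interior little-oh decay of $f$ furnished by $f\in\cK_{\omega}(\E,U)$, and on the other side the modulus-of-continuity bounds for $\phi_{\delta}$ coming from $|\Phi|\asymp\omega(d(\cdot,\E))$. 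These bounds force one to compare $\omega$ at a scale $t$ with its value at $t^{2}$, and it is here that \eqref{cond}, in the form $\omega(t^{2})\ge\eta_{\omega}\,\omega(t)$, is used again to reabsorb the cross terms into a single factor $\omega(|z-w|)$ up to the constant $\eta_{\omega}$. Carrying out these estimates uniformly in $\delta$ and letting $\delta\to0$ finishes the proof.
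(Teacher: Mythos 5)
Your proposal has two genuine gaps, and the first is fatal. The ``decisive step'' --- fixing $g=U\Phi\in\cI$ with $\Phi$ outer, $\Phi\in\cL$ and $|\Phi(\xi)|\asymp\omega\big(d(\xi,\E)\big)$ on $\T$ --- is not merely unjustified; it is false in general. The ideal structure only lets you \emph{add} outer factors to elements of $\cI$ (by multiplying by elements of $\cL$); it gives no way to enlarge an outer part, and enlarging it is essentially the problem the theorem solves. Concretely: take $\omega$ satisfying \eqref{cond} and a set $\E$ satisfying \eqref{f4} with $U=1$ (for instance $\omega=\chi_{_\beta}$ and $\E$ a single point), and let $\cI:=\cJ_{\omega}(\E,1)$, which by Corollary \ref{corol} is a nontrivial closed ideal with hull $\E$ and trivial inner factor. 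If some $g\in\cI$ had $|g(\xi)|=|O_g(\xi)|\geq c\,\omega\big(d(\xi,\E)\big)$ near $\E$, then choosing $\zeta\in\E$ with $|\xi-\zeta|=d(\xi,\E)$ gives $|g(\xi)-g(\zeta)|=|g(\xi)|\geq c\,\omega(|\xi-\zeta|)$, which contradicts the little-oh condition defining $\cJ_{\omega}(\E,1)$. So this $\cI$ contains \emph{no} element with outer part comparable to $\omega\big(d(\cdot,\E)\big)$, and your scheme cannot start. What is actually available is the much weaker Proposition \ref{lem2}: $\cI$ contains some $f$ with $U_f=U_{_\cI}$, with no lower bound whatsoever on $|O_f|$; and even this is a substantial result, not ``a standard argument that combines countably many elements of $\cI$'' --- in the nonseparable algebra $\cL$ it requires Lemma \ref{multi2} (via Beurling--Rudin in $\cA(\D)$), Theorem \ref{main2}, and --- circularly for your purposes --- Proposition \ref{lem22} itself.

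The second gap is the division step: ``near $\E$ the product $\phi_{\delta}f$ vanishes faster than $g$, whence $\phi_{\delta}f/g\in\cL$.'' Domination of boundary moduli makes the quotient of outer parts bounded, hence in $\cH^{\infty}$, but it does not put the quotient in $\cL$: the F-property of Havin--Shamoyan--Shirokov concerns division by \emph{inner} factors only, and division by outer functions in Lipschitz-type algebras is not controlled by size comparisons. This is exactly why the paper never divides by an outer function. Its route is the opposite of yours: given $g\in\cJ_{\omega}(\E_{_\cI},U_{_\cI})$ and the special $f\in\cI$ from Proposition \ref{lem2}, one \emph{multiplies}, writing $gO_f=(g/U_{_\cI})f\in\cI$ (only inner division is used, via the F-property), and then the unwanted outer factor $O_f$ is stripped off inside $\cI$ by Proposition \ref{lem22}, i.e.\ by Korenblum's localization: the Korenblum outer functions $f_{\Gamma}$ of \eqref{korenblum}, invertibility of suitable cosets in the quotient algebra $\cL/\cI$ (Lemma \ref{fal}), and the limiting Lemmas \ref{fliyo}, \ref{lem3} and \ref{hamam} --- which is also where hypothesis \eqref{cond} actually enters. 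Your peripheral observations (the use of Proposition \ref{coincide}, the integrability $\log\omega\big(d(\cdot,\E)\big)\in L^{1}(\T)$ for the hull of a nonzero ideal, the slow variation forced by \eqref{cond}) are correct, but they do not repair either gap.
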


In particular Theorem \ref{main1} provides us with an extension of \eqref{f1} to a large class of smooth algebras between $\cH^\infty$ and $\bigcup\limits_{0<\alpha\leq 1} Lip_{\omega_\alpha},$ such as  $Lip_{\chi_{_\beta}}$ and $Lip_{\psi_{_\beta}},$ where
$$\psi_{_\beta}(t):=\log^{-\beta} \log\big(\frac{2e^{1+\beta}}{t}\big), \qquad 0< t\leq 2.$$
As consequence of Theorem \ref{main1} we deduce the following corollary.

\begin{coro}\label{corol}
We suppose that $\E$ or $U$ are nontrivial such that $\E\supseteq\sigma(U)\cap\T$ and
\begin{equation}\label{f4}
\int_{\T} \log \omega\big(d(\xi,\Z_U\cup\E)\big)|d\xi|>-\infty,
\end{equation}
where $\omega$ is a modulus of continuity satisfying \eqref{cond}. Then $\cJ_{\omega}(\E, U)$ is a nontrivial principal closed ideal with  $\E$ as hull and $U$ as inner factor.
\end{coro}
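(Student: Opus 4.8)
The plan is to exhibit a single function $f_0$ that generates $\cJ_{\omega}(\E,U)$ as a closed ideal, and then to read off every assertion from Theorem \ref{main1}. Set $E^\ast:=\overline{\Z_U}\cup\E$; since $\overline{\Z_U}\cap\T\subseteq\sigma(U)\cap\T\subseteq\E$, the set $E^\ast$ is a closed subset of $\overline{\D}$ whose trace on $\T$ is exactly $\E$. Because $\omega\le\omega(2)$, the map $\xi\mapsto\log\omega\big(d(\xi,\Z_U\cup\E)\big)$ is bounded above, and by hypothesis \eqref{f4} its integral is finite; hence the outer function
$$
G(z):=\exp\Big(\frac{1}{2\pi}\int_\T\frac{\xi+z}{\xi-z}\,\log\omega\big(d(\xi,\Z_U\cup\E)\big)\,|d\xi|\Big)
$$
is a well-defined nonzero element of $\cH^{\infty}$ with $|G(\xi)|=\omega\big(d(\xi,\Z_U\cup\E)\big)$ for a.e.\ $\xi\in\T$ and $\|G\|_\infty\le\omega(2)$. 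I would then put $f_0:=UG$, so that $f_0/U=G\in\cH^{\infty}$. The technical core is to check, by the standard estimates for outer functions with prescribed boundary modulus together with the regularity \eqref{cond}, that $f_0\in\cL$ and that $f_0$ has vanishing oscillation along $\E$, i.e.\ $f_0\in\cJ_{\omega}(\E,U)$; the finiteness of \eqref{f4} forces $G\not\equiv0$, whence $f_0\neq0$ and $\cJ_{\omega}(\E,U)$ is nonzero. It is also proper: when $\E\neq\emptyset$ its elements vanish on $\E$, and when $U$ is nonconstant they are divisible by $U$, so under the hypothesis that $\E$ or $U$ is nontrivial the constants are excluded. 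Finally, once $f_0\in\cL\subseteq\cA(\D)$, the continuous functions $|f_0|$ and $\omega\big(d(\cdot,\Z_U\cup\E)\big)$ agree a.e.\ on $\T$ (as $|U|=1$ a.e.), hence everywhere, so $f_0$ vanishes on $\T$ exactly on $\overline{\Z_U\cup\E}\cap\T=\E$.

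Next I would compute the hull and inner factor of the closed principal ideal $\cI_0:=\overline{f_0\cL}$. As $f_0\in\cI_0$ vanishes on $\T$ precisely on $\E$, every point of $\E_{\cI_0}$ lies in $\E$, while conversely every element of $\cI_0$ is a uniform limit of functions $f_0h$ and so vanishes on $\E$; hence $\E_{\cI_0}=\E$. For the inner factor, taking $h=1$ shows $U_{\cI_0}$ divides the inner part of $f_0$, which equals $U$ because $G$ is outer; and if $g=\lim_n f_0h_n$ in $\cL$, then $g/U=\lim_n Gh_n$ is a bounded pointwise limit on $\D$, so $g/U\in\cH^{\infty}$ and $U$ divides the inner part of every element of $\cI_0$. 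Therefore $U_{\cI_0}=U$.

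Finally, Theorem \ref{main1} applied to $\cI_0$ gives $\cJ_{\omega}(\E,U)=\cJ_{\omega}(\E_{\cI_0},U_{\cI_0})\subseteq\cI_0$. On the other hand $f_0\in\cJ_{\omega}(\E,U)$ and $\cJ_{\omega}(\E,U)$ is a closed ideal, so $\cI_0=\overline{f_0\cL}\subseteq\cJ_{\omega}(\E,U)$. Hence $\cJ_{\omega}(\E,U)=\overline{f_0\cL}$ is a nontrivial principal closed ideal, and by the previous paragraph its hull is $\E$ and its inner factor is $U$, as claimed.

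The step I expect to be the main obstacle is the verification in the first paragraph that $f_0=UG$ really lies in $\cL$ with vanishing oscillation near $\E$: the boundary modulus $\omega\big(d(\cdot,\Z_U\cup\E)\big)$ is tailored precisely so that multiplication by the inner function $U$ — whose bad behaviour is confined to $\sigma(U)\cap\T\subseteq\E$ and to the interior zeros $\Z_U$ — does not destroy the $\omega$-Lipschitz estimate, and condition \eqref{cond} is exactly what makes the comparison between boundary distances and interior oscillations go through. Everything else is a formal consequence of Theorem \ref{main1} and the definitions.
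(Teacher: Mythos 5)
Your ideal-theoretic skeleton is the same as the paper's: produce one nonzero $f_0\in\cJ_{\omega}(\E,U)$ whose boundary zero set is $\E$ and whose inner factor is $U$, form the closed principal ideal it generates, identify its hull and inner factor, and close the circle with Theorem \ref{main1}. The genuine gap is exactly the step you deferred to ``standard estimates'': for your candidate $f_0=UG$, with $G$ the outer function of boundary modulus $|G|=\omega\big(d(\cdot,\Z_U\cup\E)\big)$, neither the membership $f_0\in\cL$ nor, more seriously, the membership $f_0\in\cJ_{\omega}(\E,U)$ is standard, and the second does not follow from the first. Belonging to $\cJ_{\omega}(\E,U)$ is strictly stronger than an $\omega$-Lipschitz bound: the oscillation quotient $|f_0(\xi)-f_0(\zeta)|/\omega(|\xi-\zeta|)$ must tend to $0$ uniformly as $\xi,\zeta$ approach $\E$, and a function whose boundary modulus is exactly $\omega(d(\cdot,\E))$ is the borderline case: for $|\xi-\zeta|\asymp d(\xi,\E)$ the trivial bound gives only $|f_0(\xi)-f_0(\zeta)|\le|f_0(\xi)|+|f_0(\zeta)|\asymp\omega(|\xi-\zeta|)$, which is $O(\omega)$ but not $o(\omega)$. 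That no general principle of the kind you invoke can supply the missing decay is shown by $\E=\{1\}$, $U=1$, $\omega(t)=t$: then $G$ is a unimodular multiple of $1-z$, which lies in $\cI_{\omega_1}(\{1\})$ but not in $\cJ_{\omega_1}(\{1\})$, since its oscillation quotient is identically $1$. (That example violates \eqref{cond}, but your argument never isolates how \eqref{cond} would rescue the claim; doing so would require genuine conjugate-function estimates, i.e.\ precisely the hard analysis you set aside. Likewise, even the weaker claim $UG\in\cL$ is not a routine computation: it is essentially the Shirokov theorem the paper cites.)

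The paper sidesteps both difficulties with two ingredients. First, it quotes Shirokov \cite[Page.~137]{Shi}: under \eqref{f4} there exists $f\in\cL$ with $f^{-1}(0)=\Z_U\cup\E$ and $f/U\in\cH^{\infty}$. Second, by the F-property of $\cL$, both $O_f$ and $UO_f$ lie in $\cL$, hence in $\cI_{\omega}(\E)$, and the generator is taken to be $g:=UO_f^2=(UO_f)\cdot O_f$, a \emph{product of two} elements of $\cI_{\omega}(\E)$. The squaring is the whole point: if $u,v\in\cI_{\omega}(\E)$ then
\begin{equation*}
\frac{|uv(\xi)-uv(\zeta)|}{\omega(|\xi-\zeta|)}\le |u(\xi)|\,\|v\|_{\omega}+|v(\zeta)|\,\|u\|_{\omega},
\end{equation*}
and the right-hand side tends to $0$ uniformly as $\xi,\zeta\to\E$ because $u$ and $v$ are continuous and vanish on $\E$; so $uv\in\cJ_{\omega}(\E)$ automatically. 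If you replace your $f_0$ by this $g$, the rest of your argument --- hull and inner factor of the principal closed ideal, Theorem \ref{main1} for one inclusion, and closedness of $\cJ_{\omega}(\E,U)$ for the other --- goes through essentially as you wrote it.
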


The estimate \eqref{f4} is in fact a necessary condition guaranteeing that $\cJ_{\omega}(\E, U)$ is nontrivial,
see the proof of Corollary \ref{corol}. By joining together the results established in Theorem \ref{main1} and Corollary \ref{corol} we deduce that $\cJ_{\omega}(\E, U)$ is the smallest closed ideal with $\E$ as hull and $U$ as inner factor provided that \eqref{f4} is satisfied.

To prove Theorem \ref{main1} we give an adaptation in the space $\cJ_{\omega}(\E):=\cJ_{\omega}(\E,1)$ of  Korenblum's functional approximation method, see for instance \cite{Bou1,Kor,Mat}. To do so, we will use some properties enjoyed by the space $\cJ_{\omega}(\E)$ which we describe in the following theorem.

\begin{thm}\label{main2}
Let $g\in \cJ_{\omega}(\E)$ be a function, where $\omega$ is an arbitrary modulus of continuity. Let $V\in\cH^{\infty}$ be an inner function dividing $g,$ that is $g/V\in\cH^{\infty}.$
Then
\begin{equation}\label{fprop1}
VO_{g}\in \cJ_{\omega}(\E),
 \end{equation}
 where $O_{g}$ is the outer factor of $g.$

Let $\{V_n\ :\ n\in\N\}\subset\cH^{\infty}$ be a sequence of inner functions dividing $g$ such that $\sigma(V_n)\cap\T\subseteq\E,$ for every $n\in\N.$ If
\begin{equation}\label{fprop1.5}\lim\limits_{
n\rightarrow\infty} \|V_nO_{g}-VO_{g}\|_{\infty}=0,
 \end{equation}
then
\begin{equation}\label{fprop2}
   \lim\limits_{n\rightarrow\infty} \|V_nO_{g}-VO_{g}\|_{\omega}=0.
 \end{equation}
\end{thm}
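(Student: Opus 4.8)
The plan is to reduce both assertions to pointwise derivative estimates in the disk and to test membership in $\cJ_{\omega}(\E)$ through the equivalent disk condition of $\cK_{\omega}(\E)$, using Proposition \ref{coincide} and Tamrazov's norm equivalence to move freely between the $\T$- and $\D$-seminorms. Write $g=I_{g}O_{g}$ for the inner--outer factorization; since the inner function $V$ divides $g$ we have $I_{g}=WV$ with $W:=I_{g}/V$ inner, and the object to study is $\Phi:=VO_{g}=g/W$. For \eqref{fprop1} I must check three things: that $\Phi\in\cL$, that $\Phi_{|\E}\equiv0$, and that the local modulus of $\Phi$ near $\E$ vanishes. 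The boundary vanishing is the easiest: once $\Phi\in\cL$ it is continuous on $\overline{\D}$, and since $|W|=1$ a.e. on $\T$ we get $|\Phi|=|O_{g}|=|g|$ a.e. on $\T$; both sides being continuous, the equality holds everywhere on $\T$, so $\Phi$ vanishes exactly where $g$ does, in particular on $\E$.

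The membership $\Phi=g/W\in\cL$ is the division property (the $F$-property) of the Lipschitz algebra, and once $\Phi\in\cL$ the standard bound $|\Phi'(z)|\le C\|\Phi\|_{\omega}\,\omega(1-|z|)/(1-|z|)$ holds. The substance of \eqref{fprop1} is therefore the vanishing of the local modulus near $\E$. Starting from $\Phi'=g'/W-\Phi W'/W$ and the Schwarz--Pick inequality $|W'|\le(1-|W|^{2})/(1-|z|^{2})$, a term-by-term bound fails because of the factor $1/|W|$; the required estimate must exploit the cancellation in $g'W-gW'=W^{2}\Phi'$ together with the fact that, for $g\in\cJ_{\omega}(\E)=\cK_{\omega}(\E)$, one has $|g(z)|\le\varepsilon\,\omega(d(z,\E))$ and $|g'(z)|\le C\varepsilon\,\omega(d(z,\E))/d(z,\E)$ whenever $d(z,\E)$ is small (with $\varepsilon\to0$ as the region shrinks; the first comes from comparing $g(z)$ to $g$ at the nearest point of $\E$, the second from a Cauchy estimate on a disk of radius $d(z,\E)/2$). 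Combining these with Shirokov-type estimates for $W'/W$ near $\sigma(W)$ should give $|\Phi'(z)|\le C\varepsilon\,\omega(d(z,\E))/d(z,\E)$ near $\E$, whence, after integrating along short radial and circular paths, $\Phi\in\cK_{\omega}(\E)=\cJ_{\omega}(\E)$. I expect this cancellation estimate to be the main obstacle and the only place where the full strength of the smoothness machinery is needed; crucially, the constant it produces should depend only on the local modulus of $g$ and on $\omega$, and not on the particular inner divisor $W$.

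For the convergence statement set $h_{n}:=V_{n}O_{g}-VO_{g}$. By the first part both $V_{n}O_{g}$ and $VO_{g}$ lie in $\cJ_{\omega}(\E)$, hence so does $h_{n}$; since $\|h_{n}\|_{\omega}=\|h_{n}\|_{\infty}+[h_{n}]_{\omega}$ and $\|h_{n}\|_{\infty}\to0$ by \eqref{fprop1.5}, it suffices to control the Lipschitz seminorm $[h_{n}]_{\omega}$. I would split it at a scale $\delta_{0}$ into pairs $z,w\in\E(\delta_{0})$ and pairs not both in $\E(\delta_{0})$. The near-$\E$ part is handled by the uniform local-modulus bound from the first part: because that bound depends only on $g$ and $\omega$, it applies simultaneously to every $V_{n}O_{g}$ and to $VO_{g}$, so the near-$\E$ contribution to $[h_{n}]_{\omega}$ is below any prescribed $\varepsilon$ once $\delta_{0}$ is small, uniformly in $n$.

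With $\delta_{0}$ fixed it remains to show the far-from-$\E$ contribution tends to $0$ as $n\to\infty$. For pairs with $|z-w|\ge\delta_{0}/2$ this is immediate from $\frac{|h_{n}(z)-h_{n}(w)|}{\omega(|z-w|)}\le2\|h_{n}\|_{\infty}/\omega(\delta_{0}/2)$. For the remaining pairs both points, and the segment joining them, lie in $R':=\{d(\cdot,\E)>\delta_{0}/4\}$; here I use $\sigma(V_{n})\cap\T\subseteq\E$ (and, passing to the limit via Hurwitz and the reflection principle, $\sigma(V)\cap\T\subseteq\E$), so that $V_{n}$ and $V$ are analytic and uniformly bounded across $\T\setminus\E$, with uniformly bounded derivatives on $R'$ by Schwarz--Pick and reflection. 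Since $O_{g}$ does not vanish in $\D$, the uniform convergence $V_{n}O_{g}\to VO_{g}$ forces $V_{n}\to V$ locally uniformly in $\D$, and the uniform bounds then upgrade this, through a normal-families and Cauchy-estimate argument, to $V_{n}\to V$ and $V_{n}'\to V'$ uniformly on $R'$. Writing $h_{n}(z)-h_{n}(w)=(V_{n}-V)(z)\big(O_{g}(z)-O_{g}(w)\big)+\big((V_{n}-V)(z)-(V_{n}-V)(w)\big)O_{g}(w)$ and dividing by $\omega(|z-w|)$ bounds the quantity by $\|V_{n}-V\|_{\infty,R'}\,[O_{g}]_{\omega}+\big(\sup_{0<t\le2}t/\omega(t)\big)\,\|O_{g}\|_{\infty}\,\sup_{R'}|(V_{n}-V)'|\to0$. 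The delicate point is that $O_{g}$ degenerates near the zeros of $g$ sitting in $\T\setminus\E$, where its derivative blows up; the remedy is that there the $V_{n}$ and $V$ are genuinely analytic (their spectra avoid $\T\setminus\E$), so the argument leans on the smoothness of the inner factors rather than on any boundary smoothness of $O_{g}$.
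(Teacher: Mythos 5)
Your outline correctly identifies the skeleton of the argument (F-property for membership in $\cL$, reduction to a local-modulus estimate near $\E$ that must be \emph{uniform over inner divisors}, and a near/far splitting for the convergence statement, the far part using analyticity of the $V_n$ across $\T\setminus\E$ plus normal families — this parallels the paper's use of Lemma \ref{hilbert}). But the core of the theorem is missing: precisely the estimate you defer with ``should give'' is the whole content of the paper's proof. The paper establishes
\begin{equation*}
\lim_{\delta\rightarrow 0}\ \sup_{\xi,\zeta\in\E(\delta)\setminus\E_g,\ \xi\neq\zeta}\Big|g(\zeta)\frac{V(\xi)-V(\zeta)}{\omega(|\xi-\zeta|)}\Big|=0
\quad\text{uniformly over all inner divisors } V \text{ of } U_g,
\end{equation*}
(its \eqref{souklarba3}) by a case analysis on $|\xi-\zeta|$ versus $d(\zeta,\Z_g)$ and $a^{-1}_{U_g}(\zeta)$, whose engine is the refined Shirokov-type bound of Lemma \ref{forml}: $|g(\zeta)|=o\big(\omega(\min\{1,a^{-1}_{U_g}(\zeta)\})\big)$ as $d(\zeta,\E)\to 0$. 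That lemma in turn rests on a harmonic-majorant estimate for the outer factor inside the disk (Lemma \ref{form0}) and on Proposition \ref{coincide}. None of this is supplied or replaced in your proposal; asserting that ``Shirokov-type estimates for $W'/W$'' combined with ``cancellation'' will produce the bound is exactly the step that requires proof, and it is the only genuinely hard step.

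Moreover, the specific route you sketch has two internal obstructions you do not resolve. First, working with $\Phi'=(g'W-gW')/W^2$ in the disk forces division by $W$, whose zeros accumulate at $\sigma(W)\cap\T\subseteq\E_g$, i.e.\ precisely in the region you must control; the paper avoids this entirely by estimating on the boundary, where the relevant decomposition is $VO_g(\xi)-VO_g(\zeta)=V(\xi)\big(O_g(\xi)-O_g(\zeta)\big)+O_g(\zeta)\big(V(\xi)-V(\zeta)\big)$ and $|O_g(\zeta)|=|g(\zeta)|$, so that Lemma \ref{forml} applies directly. Second, even granting your target bound $|\Phi'(z)|\le C\varepsilon\,\omega(d(z,\E))/d(z,\E)$, it does not yield $\Phi\in\cK_{\omega}(\E)$ by path integration alone: for pairs with $|z-w|\gtrsim d(z,\E)$ one needs the pointwise vanishing $|\Phi(z)|=o(\omega(d(z,\E)))$, and since $\int_0\omega(t)\,dt/t$ may diverge (e.g.\ for $\omega=\chi_{_\beta}$, which is the regime this paper cares about), this cannot be obtained by integrating the derivative bound from the nearest point of $\E$; one needs a direct estimate of $|O_g|$ inside $\D$ by the boundary modulus of $g$, which is again the paper's Lemma \ref{form0}. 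So the second half of your argument is structurally sound but entirely conditional on an estimate whose proof is absent.
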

In particular,  assertion \eqref{fprop1} shows that $\cJ_{\omega}(\E)$ possesses the F-property.
This kind of factorization property has been proved by Havin, Shamoyan and Shirokov for the whole space $\cL$ with respect to any modulus of continuity,  see for instance \cite{HS}, \cite[Chapter I]{Shi} and also \cite[Appendix B]{Bou1}.

The remainder  of this paper is organized as follows: In section \ref{sect2}, we give the proof of Theorem \ref{main1}
and Corollary \ref{corol}. Section \ref{sect31} is devoted to presenting some needed results.
Sections \ref{sect4}  and \ref{sect3} contain respectively the proofs of  Propositions \ref{lem22} and \ref{lem2}. In the last section we give the proof of Theorem \ref{main2}.

\section{\bf Proofs of Theorem \ref{main1} and Corollary \ref{corol}.\label{sect2}}

In section \ref{sect42} we give the proof of the following proposition.

\begin{prop}\label{lem22}
Let  $\cI\subseteq\cL$ be  a closed ideal and let $g\in\cJ_{\omega}(\E_{_\cI})$ be a function, where $\omega$ is a modulus of continuity satisfying \eqref{cond}.
We suppose that $gf\in\cI$ for some outer function $f\in\cL.$ Then $g\in\cI.$
\end{prop}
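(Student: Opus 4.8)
The plan is to realize $g$ as a limit in $\cL$ of a sequence $g_n\in\cI$; since $\cI$ is closed this gives $g\in\cI$. The idea is to ``divide out'' the outer factor $f$ from the hypothesis $gf\in\cI$ by a controlled regularization.

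Normalize $\|f\|_\infty\leq1$, so that $\log|f|\leq0$ is integrable on $\T$, and let $f_n$ be the outer function with boundary modulus $\max(|f|,1/n)$. Then $|f_n|\geq1/n$, so $1/f_n$ is holomorphic and bounded on $\D$; moreover $f_n\in\cL$ because taking the maximum of the $\cL$-modulus $|f|$ with a positive constant again yields the modulus of an outer function in $\cL$ (the Havin--Shamoyan--Shirokov F-property of $\cL$ recalled after Theorem \ref{main2}), and $1/f_n\in\cL$ since the reciprocal of an $\cL$-function bounded away from $0$ stays in $\cL$. Hence $f/f_n=f\cdot(1/f_n)\in\cL$, with $|f/f_n|=\min(1,n|f|)\leq1$ on $\T$, and
$$g_n:=g\,\frac{f}{f_n}=(gf)\,\frac{1}{f_n}\in\cI,$$
because $gf\in\cI$ and $\cI$ is an ideal of $\cL$.

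It remains to prove $\|g_n-g\|_\omega=\|g(f/f_n-1)\|_\omega\to0$. Since $\log|f|\in L^1(\T)$, dominated convergence gives $\log\min(1,n|f|)\to0$ in $L^1(\T)$, whence $f/f_n\to1$ in every $\cH^p$ and uniformly on compact subsets of $\D$; this controls the behaviour away from the boundary zero set $\{f=0\}$. The first delicate point is that near boundary zeros of $f$ lying outside $\E_{_\cI}$ the factor $f/f_n-1$ stays of size one while $g$ need not vanish, so the global approximant $g_n$ must be corrected locally: using that $\E_{_\cI}$ is the hull of $\cI$, I would produce near each such point a generator $h\in\cI$ with $h\neq0$ and splice $g_n$ with these pieces through an $\cL$-partition of unity, thereby recovering sup-norm convergence of the corrected sequence.

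The heart of the matter, and the step I expect to be the main obstacle, is the uniform estimate of the Lipschitz--$\omega$ seminorm of $g(f/f_n-1)$ near $\E_{_\cI}$. Here I would split the difference quotient $|(g(f/f_n-1))(z)-(g(f/f_n-1))(w)|/\omega(|z-w|)$ according to whether $z,w$ both lie in a shrinking neighbourhood $\E_{_\cI}(\delta)$ of $\E_{_\cI}$, using the defining little-oh property of $g\in\cJ_\omega(\E_{_\cI})$ (in the equivalent form of Proposition \ref{coincide}) on that regime, while on the complementary regime condition \eqref{cond} is what allows one to transfer the $L^1$-smallness of $\log\min(1,n|f|)$ into genuine control of $\omega(|z-w|)^{-1}|(f/f_n-1)(z)-(f/f_n-1)(w)|$. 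Balancing these two regimes uniformly in $n$, and in particular exploiting $\eta_\omega>0$ to absorb the quadratic loss of scale that appears when comparing $\omega$ at a point and at its square, is the technical core on which the whole proposition rests.
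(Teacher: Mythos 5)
Your proposal does not reach a proof, and two of its load-bearing steps are genuinely broken rather than merely unfinished. First, the regularization step: the claim that the outer function $f_n$ with boundary modulus $\max(|f|,1/n)$ (and hence $1/f_n$) lies in $\cL$ is not what the F-property gives. The Havin--Shamoyan--Shirokov F-property concerns division by \emph{inner} factors and says nothing about truncating the modulus of an outer function. Whether $f_n\in\cL$ is a question about harmonic conjugation: one needs $\widetilde{\log\max(|f|,1/n)}$ to have modulus of continuity $O(\omega)$, and for the moduli singled out by condition \eqref{cond} --- e.g.\ $\chi_{_\beta}$ with $\beta\leq1$, which fail the Dini condition $\int_0\omega(s)s^{-1}ds<\infty$ --- the conjugation operator does not preserve $Lip_\omega$, while truncation destroys exactly the special structure that makes $\arg f$ behave for the given $f\in\cL$. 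So already the assertion $g_n=(gf)(1/f_n)\in\cI$ is unjustified. Second, and more fundamentally, your own diagnosis of the obstruction is correct (at a boundary zero of $f$ off $\E_{_\cI}$ one has $|f/f_n|=0$ while $g\neq0$, so $\|g_n-g\|_\infty\not\to0$), but the proposed repair --- splicing $g_n$ with local generators of $\cI$ through an ``$\cL$-partition of unity'' --- is impossible: elements of $\cL$ are analytic in $\D$, so this algebra contains no cutoff functions and admits no partitions of unity. Finally, the uniform $\omega$-seminorm estimate, which you yourself call the technical core, is only described, not proved; so what remains is a plan whose two concrete steps are, respectively, unjustified and unrealizable.

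For contrast, the paper never divides by a regularization of $f$: it multiplies $g$ by Korenblum's outer functions $f_{\Gamma_n}$ of \eqref{korenblum}, whose modulus equals $|f|$ only on the union $\Gamma_n$ of the small complementary arcs of $\E_{_\cI}$ and equals $1$ elsewhere. Since these arcs shrink to $\E_{_\cI}$, the little-oh property of $g\in\cJ_{\omega}(\E_{_\cI})$ together with \eqref{cond} yields $\|gf_{\Gamma_n}-g\|_{\omega}\to0$ (Lemma \ref{fliyo}); this sidesteps your zeros-of-$f$ problem because the zeros of $f$ on the finitely many remaining arcs are simply absent from $f_{\Gamma_n}$. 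The membership $gf_{\Gamma_n}\in\cI$ is then obtained not by division but by Lemma \ref{fal}: writing $0=\pi\big(\phi^{2}_{\rho,\varepsilon}gf\big)=\pi\big(\phi_{\rho,\varepsilon}gf_{\T\setminus\gamma_{\varepsilon}}\big)\pi\big(\phi_{\rho,\varepsilon}f_{\gamma_{\varepsilon}}\big)$ in $\cL/\cI$, using that $\pi\big(\phi_{\rho,\varepsilon}f_{\gamma_{\varepsilon}}\big)$ is invertible because its zero set avoids $\E_{_\cI}\cup\sigma(U_{_\cI})$, and then removing the auxiliary factors $\phi_{\rho,\varepsilon}$ and $\phi_{\rho,0}$ via the limiting Lemmas \ref{hamam} and \ref{lem3}. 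That invertibility-in-the-quotient argument is the analytic substitute for the partition of unity you invoke, and it, together with the uniform estimates of Lemma \ref{fliyo}, is precisely what your proposal lacks.
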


For $f\in\cH^{\infty}$ we denote by $U_{f}$ and $O_{f}$  respectively the inner  and the outer factor of $f$. In section \ref{sect32} we will use both Theorem \ref{main2} and Proposition \ref{lem22} to give the proof of the following proposition.

\begin{prop}\label{lem2}
Let $\cI\subseteq\cL$ be a closed ideal, where $\omega$ is a modulus of continuity satisfying \eqref{cond}. There exists a function $f\in\cI$ such that $U_f=U_{_\cI}.$
\end{prop}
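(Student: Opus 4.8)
The plan is to realize $U_{_\cI}$ as the inner factor of a limit of functions of $\cI$, using Theorem \ref{main2} to upgrade uniform convergence to convergence in $\|\cdot\|_\omega$, and Proposition \ref{lem22} to pass from a product in $\cI$ to one of its factors. First I would reduce to a countable problem. Since $U_{_\cI}$ is the greatest common inner divisor of $\{U_{g}:g\in\cI\setminus\{0\}\}$, and inner functions form a lattice under divisibility governed by their zero sets and singular measures, I would choose $g_{1},g_{2},\dots\in\cI\setminus\{0\}$ and set $W_{n}:=\gcd(U_{g_{1}},\dots,U_{g_{n}})$, so that $U_{_\cI}\,|\,W_{n+1}\,|\,W_{n}$ for all $n$ and $\bigwedge_{n}W_{n}=U_{_\cI}$; concretely the $g_{n}$ are selected so that both the zeros (with multiplicity) and the singular measure of $W_{n}$ decrease to those of $U_{_\cI}$. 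Recalling that for $g\in\cL\subseteq\cA(\D)$ one has $\sigma(U_{g})\cap\T\subseteq\{\xi\in\T:g(\xi)=0\}$, and that divisibility of inner functions is compatible with inclusion of spectra, this also yields $\sigma(U_{_\cI})\cap\T\subseteq\E_{_\cI}$, which is what allows $\E_{_\cI}$ to play the role of the set $\E$ in Theorem \ref{main2}.

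Next I would pass to a common outer factor. Because $\cL$ has the F-property, each outer factor $O_{g_{k}}$ again lies in $\cL$, so, $\cI$ being an ideal, the products $\tilde g_{i}:=g_{i}\prod_{k\neq i}O_{g_{k}}$ belong to $\cI$ and share one outer factor $\Phi:=\prod_{k}O_{g_{k}}$, the infinite product being normalised to converge in $\cL$; thus $\tilde g_{i}=U_{g_{i}}\Phi$. I would then approximate $U_{_\cI}$ from above by inner divisors: thinning the singular part through the divisors $S_{t\nu}$ ($t\downarrow0$) and dropping Blaschke zeros, I construct inner functions $V_{n}$ with $U_{_\cI}\,|\,V_{n}\,|\,W_{n}$, with $\sigma(V_{n})\cap\T\subseteq\E_{_\cI}$, and $V_{n}\to U_{_\cI}$ in divisibility. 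The decisive feature of the common outer factor $\Phi$, which vanishes on $\E_{_\cI}$, is that the differences $V_{n}-U_{_\cI}$ are concentrated near $\E_{_\cI}$, so that the decay of $\Phi$ there forces $\|V_{n}\Phi-U_{_\cI}\Phi\|_{\infty}=\|(V_{n}-U_{_\cI})\Phi\|_{\infty}\to0$, which is exactly the hypothesis \eqref{fprop1.5}.

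With \eqref{fprop1.5} verified I would invoke Theorem \ref{main2}, whose assertion \eqref{fprop1} places $U_{_\cI}\Phi$ and the $V_{n}\Phi$ in $\cJ_{\omega}(\E_{_\cI})$ and whose assertion \eqref{fprop2} upgrades the uniform estimate to $\|V_{n}\Phi-U_{_\cI}\Phi\|_{\omega}\to0$. Exhibiting the $V_{n}\Phi$, up to a residual outer factor, as suitable combinations of the $\tilde g_{i}$ that lie in the closed ideal, the limit produces an element of $\cI$ of the form $(U_{_\cI}\Phi)\,f$ with $f$ outer in $\cL$; since $U_{_\cI}\Phi\in\cJ_{\omega}(\E_{_\cI})$, Proposition \ref{lem22} then gives $U_{_\cI}\Phi\in\cI$, and as $\Phi$ is outer this function has inner factor exactly $U_{_\cI}$, so $f:=U_{_\cI}\Phi$ satisfies $U_{f}=U_{_\cI}$. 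I expect the main obstacle to be the construction in the second paragraph: forcing the approximants $V_{n}$ simultaneously to have boundary spectra inside $\E_{_\cI}$ and to satisfy the uniform estimate $\|(V_{n}-U_{_\cI})\Phi\|_{\infty}\to0$, while controlling both the Blaschke and the singular components of $W_{n}/U_{_\cI}$ and ruling out accidental inner factors created by the combinations. This is precisely the step where the F-property of $\cL$ and the quantitative decay of $\Phi$ near $\E_{_\cI}$ must be combined, and it is condition \eqref{cond} that makes the relevant estimates uniform.
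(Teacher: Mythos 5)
Your outline gets the endgame right (produce $U_{_\cI}\cdot(\text{outer})\in\cI$ via Theorem \ref{main2} to upgrade $\|\cdot\|_{\infty}$-convergence to $\|\cdot\|_{\omega}$-convergence, then strip outer factors with Proposition \ref{lem22}), but the middle of the argument contains a circularity that it does not resolve. The step ``exhibiting the $V_n\Phi$, up to a residual outer factor, as suitable combinations of the $\tilde g_i$ that lie in the closed ideal'' \emph{is} the proposition. Knowing that $U_{_\cI}$ is the greatest common inner divisor of $\{U_g : g\in\cI\setminus\{0\}\}$ gives no algebraic recipe for manufacturing an element of $\cI$ whose inner factor is $U_{_\cI}$ (or even close to it in any sense): already for two generators, the question of whether $\gcd(U_{g_1},U_{g_2})\cdot(\text{outer})$ lies in the closed ideal generated by $g_1,g_2$ is a Beurling--Rudin-type statement, not a lattice identity. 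The paper supplies exactly this missing mechanism by two devices you have no substitute for: (i) quotient-algebra invertibility arguments --- the finite Blaschke product $B_{\{f,n\}}$ is invertible in $\cL/\cJ_n$ because the hull of $\cJ_n$ lies outside $\D_n$, and functions whose zero sets avoid $\sigma(U_{_\cI})\cup\E_{_\cI}$ are invertible in $\cL/\cI$ --- which lets one swap $B_f$ for $B_{_\cI}$ and trim the singular measure down to $\E_{_\cI}$ one arc at a time; and (ii) the Beurling--Rudin theorem in the disk algebra, applied to the closure of $\cI$ in $\cA(\D)$ (Lemma \ref{multi2}), which produces a sequence $g_k\in\cI$ with $B_{g_k}\to B_{_\cI}$ locally uniformly and $\|\mu_{_{S_{g_k}}}-\mu_{_{S_{_\cI}}}\|\to 0$. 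It is this imported result from the separable setting, not the gcd structure alone, that makes the approximants converge to $U_{_\cI}$.

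There is also a technical gap at the start: the common outer factor $\Phi=\prod_k O_{g_k}$ has no reason to exist. An infinite product of outer functions generically diverges to $0$ (the sum $\sum_k\int_{\T}\log|g_k|\,|d\xi|$ will typically be $-\infty$), and no normalisation is indicated that would keep it in $\cH^\infty$, let alone in $\cL$; multiplication does not control $\|\cdot\|_\omega$ with constants uniform in the number of factors, and membership of the infinite product in $\cL$ is needed even to place $\tilde g_i$ in $\cI$. The paper avoids any such construction: it works with a single $f\in\cI$, uses $O_f^2$ as the fixed outer multiplier (squared precisely so that the relevant products are in $\cJ_\omega(\E_{_\cI})$, as a product of two functions of $\cI_\omega$ vanishing on $\E_{_\cI}$), and only at the very end brings in the countable family $g_k$ --- through Lemma \ref{multi2} and Proposition \ref{lem22} --- to replace $(S_f)_{\E_{_\cI}}$ by $S_{_\cI}$. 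Finally, note that your claim $\|(V_n-U_{_\cI})\Phi\|_\infty\to 0$ also needs the extra zeros of $W_n/U_{_\cI}$ to escape to $\T$, which is false in general: they are merely common zeros of finitely many $g_k$ and can sit deep inside $\D$, where $\Phi$ is not small; this is another place where the quotient-algebra step (removing interior Blaschke factors by invertibility mod $\cJ_n$) cannot be dispensed with.
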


\subsection*{\bf \it Proof of Theorem \ref{main1}\label{sect21}}

Let $\cI\subseteq\cL$ be a closed ideal. By applying Proposition \ref{lem2}, there exists a function
$f\in\cI$ such that $U_f=U_{_{\cI}}.$ Let $g\in\cJ_\omega(\E_{_{\cI}}, U_{_{\cI}})$ be a function.
We note that $g/U_{_{\cI}}\in\cL,$ by using the F-property of $\cL.$
Since $gO_f=(g/U_{_{\cI}})f$ then $gO_f\in\cI.$  Thus $g\in\cI,$ by applying Proposition \ref{lem22}. Therefore $\cJ_\omega(\E_{_{\cI}}, U_{_{\cI}})\subseteq\cI,$ which is the desired result. $\hfill\qed$

\subsection*{\bf \it Proof of Corollary \ref{corol}\label{sect22}}

It is known \cite[Page. 137]{Shi} that \eqref{f4} ensure the existence of a function $f\in\cL$ such that $f^{-1}(0)=\Z_{U}\cup\E$
and $f/U\in\cH^\infty.$ The functions $O_f$ and $UO_f$ belong to $\cL,$  since $\cL$ possesses the F-property. Therefore $g:=UO^{2}_f\in\cJ_\omega(\E, U),$ as product of two functions in $\cI_\omega(\E):=\cI_\omega(\E, 1).$ Hence $\cJ_\omega(\E, U)$ is non trivial and possesses $\E$ as hull and $U$ as inner factor. Now, we let $[g]$ be the closed ideal in $\cL$ generated by $g.$ Clearly $[g]\subseteq\cJ_\omega(\E, U).$ Since $\E_{_{[g]}}=\E$ and $U_{_{[g]}}=U,$ then
$\cJ_\omega(\E, U)\subseteq [g],$ by applying Theorem \ref{main1}.
Therefore $\cJ_\omega(\E, U)=[g]$ and hence
$\cJ_\omega(\E, U)$ is principal.  The proof of Corollary \ref{corol} is completed. $\hfill\qed$

\section{\bf Approximation Lemmas. \label{sect31}}

We first note that $\omega$ is positive on $]0,2]$ and since $t\mapsto \omega(t)/t$ is nonincreasing,
\begin{equation}\label{omega}
\omega(2t)\leq 2\omega(t), \qquad t\in[0,1].
\end{equation}
For a closed subset $\E\subseteq\T,$ we let $\{(a_m,b_m)\ :\ m\in\N\}$ be a sequence of distinct arcs such that
$\T\setminus\E=\bigcup\limits_{m\in\N}(a_m,b_m),$ where
$(a_m,b_m)$ is an open arc of $\T\setminus\E$ joining the points $a_m,b_m\in\E.$ We set
$$\Omega_{\E}:=\{\bigcup_{m\in M}(a_m,b_m)\ :\ M\subseteq\N\}.$$
The next lemma will be used to simplify the following proofs of Lemma \ref{fliyo}, Lemma \ref{lem3}  and Theorem \ref{main2}.

\begin{lem}\label{hilbert}
Let $g\in\cI_{\omega}(\E)$ be a function, where $\omega$ is an arbitrary modulus of continuity.
Let $\{\Delta_n\in\Omega_{\E}\ :\ n\in\N\}$ be a sequence such that $\Delta_{n}\subseteq\Delta_{n+1},$ for every $n\in\N,$ and
$\bigcup\limits_{n\in\N}\Delta_n=\T\setminus\E.$
We consider a sequence of functions $\{h_n\in\cH^{\infty}\ :\ n\in\N\}$ such that;
\begin{itemize}
\item [$i_1.$] $\sup\limits_{n\in\N}\|h_n\|_{\infty}<+\infty,$
\item [$i_2.$] For each $n\in\N,$  the function $h_n$ has an extension both to a continuous function on $\overline{\D}\setminus\E$ and to an analytic function across $\Delta_n,$
\item [$i_3.$] For each $p\in\N,$ the both sequences $\{h_n\ :\ n\geq p\}$ and  $\{h^{'}_n\ :\ n\geq p\}$ converge uniformly to $0$ on each compact subset $\K\subseteq\Delta_p.$
\end{itemize}
Then $gh_n\in\cI_\omega(\E)$ if and only if
\begin{equation}\label{9amar1}
\sup_{\xi,\zeta\in\T\setminus\E\atop\xi\neq\zeta
}\Big|g(\xi)\frac{h_n(\xi)-h_n(\zeta)}{\omega(|\xi-\zeta|)}\Big|<+\infty,
\end{equation}
where  $n\in\N.$
If moreover we suppose that $g\in\cJ_{\omega}(\E)$ and
\begin{equation}\label{chams2}
\lim_{\delta\rightarrow 0}\sup_{\xi,\zeta\in\E(\delta)\setminus\E\atop\xi\neq\zeta
\text{ and }
|\xi-\zeta|\leq\frac{1}{2}d(\xi,\E)}\Big|g(\xi)\frac{h_n(\xi)-h_n(\zeta)}{\omega(|\xi-\zeta|)}\Big|=0,
\end{equation}
uniformly with respect to all numbers $n\in\N,$ then $gh_n\in\cJ_\omega(\E)$ for sufficiently large numbers $n\in\N,$ and
$\displaystyle\lim_{n\rightarrow \infty}\|gh_n\|_{\omega}=0.$

\end{lem}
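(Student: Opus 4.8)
The plan is to reduce every estimate to the elementary product decomposition
$$(gh_n)(\xi)-(gh_n)(\zeta)=g(\xi)\big(h_n(\xi)-h_n(\zeta)\big)+h_n(\zeta)\big(g(\xi)-g(\zeta)\big),$$
and to treat separately the \emph{longitudinal} term $g(\xi)(h_n(\xi)-h_n(\zeta))$ and the \emph{transverse} term $h_n(\zeta)(g(\xi)-g(\zeta))$. I work on the boundary via $\cL(\D)=\cL(\T)$ and use that, since $t\mapsto\omega(t)/t$ is nonincreasing, $\omega(t)\geq c\,t$ with $c=\tfrac12\omega(2)$, so $\cL\subset\mathrm{Lip}_1$. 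For the first assertion, the transverse term is always controlled: by $(i_1)$, $|h_n(\zeta)|\,|g(\xi)-g(\zeta)|/\omega(|\xi-\zeta|)\leq\|h_n\|_\infty\|g\|_\omega$. Hence the difference quotient of $gh_n$ and the longitudinal term are simultaneously bounded or unbounded, so the seminorm of $gh_n$ is finite if and only if \eqref{9amar1} holds; the pairs meeting $\E$ are harmless because $g$ vanishes there (for $\zeta\in\E$ one has $(gh_n)(\xi)-(gh_n)(\zeta)=(g(\xi)-g(\zeta))h_n(\xi)$). Continuity of $gh_n$ on $\overline{\D}$, hence $gh_n\in\cA(\D)$ with $(gh_n)_{|\E}\equiv0$, follows from $(i_2)$ and $g_{|\E}\equiv0$. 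I emphasise that $(i_3)$ is not needed here.

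For the quantitative second assertion, fix $\epsilon>0$. I first prove $\|gh_n\|_\infty\to0$: choosing $\delta$ so small that $|g|<\epsilon$ on $\E(\delta)$, the compact set $\T\setminus\E(\delta)$ lies in some $\Delta_p$, where $\sup|h_n|\to0$ by $(i_3)$; with $(i_1)$ this gives $\limsup_n\|gh_n\|_\infty\leq\epsilon\sup_n\|h_n\|_\infty$. It then remains to show $S_n:=\sup_{\xi\neq\zeta}|(gh_n)(\xi)-(gh_n)(\zeta)|/\omega(|\xi-\zeta|)\to0$, which I obtain by splitting the pairs. For $|\xi-\zeta|\geq\rho$ (a fixed threshold) the quotient is $\leq2\|gh_n\|_\infty/\omega(\rho)\to0$. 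For $|\xi-\zeta|<\rho$ with both points at distance $>\delta$ from $\E$, their joining arc lies in a compact subset of some $\Delta_p$; applying the decomposition, the transverse part is $\leq\|g\|_\omega\sup|h_n|$ and the longitudinal part is $\leq c^{-1}\|g\|_\infty\sup|h_n'|$ (integrating $h_n'$ along the arc, legitimate by $(i_2)$, and using $\omega(t)\geq c\,t$), both tending to $0$ by $(i_3)$.

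The decisive region is $|\xi-\zeta|<\rho$ with at least one point near $\E$, forcing both into some $\E(\delta)$. There the transverse term is $\leq\sup_n\|h_n\|_\infty\cdot\sup_{\xi,\zeta\in\E(\delta)}|g(\xi)-g(\zeta)|/\omega(|\xi-\zeta|)$, which is small uniformly in $n$ for $\delta$ small because $g\in\cJ_\omega(\E)$. For the longitudinal term I split once more. If $|\xi-\zeta|\leq\tfrac12 d(\xi,\E)$, the uniform-in-$n$ hypothesis \eqref{chams2} makes it small. If $|\xi-\zeta|>\tfrac12 d(\xi,\E)$ — the range \emph{excluded} from \eqref{chams2} — then $d(\xi,\E)<2|\xi-\zeta|$, and taking the nearest point of $\E$ shows $g\in\cJ_\omega(\E)$ gives $|g(\xi)|\leq\epsilon_g(\delta)\,\omega(d(\xi,\E))$, where $\epsilon_g(\delta):=\sup_{\xi,\zeta\in\E(\delta),\,\xi\neq\zeta}|g(\xi)-g(\zeta)|/\omega(|\xi-\zeta|)\to0$; the doubling inequality \eqref{omega} then yields $\omega(d(\xi,\E))\leq\omega(2|\xi-\zeta|)\leq2\omega(|\xi-\zeta|)$, so this term is $\leq4\sup_n\|h_n\|_\infty\,\epsilon_g(\delta)$, again small uniformly in $n$. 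I expect this last sub-case to be the main obstacle, since it is precisely what \eqref{chams2} omits and can be controlled only by balancing the smallness of $g$ near $\E$ against the doubling of $\omega$.

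The order of quantifiers is essential: given $\epsilon$, I first fix $\delta$ so that all near-$\E$ contributions (the transverse term and both longitudinal sub-cases) are $<\epsilon$ \emph{for every} $n$, using $g\in\cJ_\omega(\E)$ and the uniformity in \eqref{chams2}; this freezes the compact region $\T\setminus\E(\delta)$, on which $(i_3)$ then drives the remaining far contributions to $0$ as $n\to\infty$. Hence $S_n\leq C\epsilon$ for large $n$, giving $S_n\to0$ and, with the first step, $\|gh_n\|_\omega\to0$. Finally, for each fixed $n$ the same near-$\E$ bounds, now letting $\delta\to0$, show $\lim_{\delta\to0}\sup_{\xi,\zeta\in\E(\delta)}|(gh_n)(\xi)-(gh_n)(\zeta)|/\omega(|\xi-\zeta|)=0$, so that $gh_n\in\cJ_\omega(\E)$.
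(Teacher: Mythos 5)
Your proof is correct and follows essentially the same route as the paper's: the identical longitudinal/transverse decomposition, the identical treatment of pairs near $\E$ (splitting at $|\xi-\zeta|\lessgtr\tfrac12 d(\xi,\E)$, using \eqref{chams2} in one sub-case and, in the other, the smallness of $|g(\xi)|/\omega(d(\xi,\E))$ together with the doubling inequality \eqref{omega}), the identical use of $(i_1)$--$(i_3)$ away from $\E$, and the same proof that $\|gh_n\|_\infty\to 0$; the only variation is cosmetic --- you glue the regions with a distance threshold $\rho$ and the bound $2\|gh_n\|_\infty/\omega(\rho)$, whereas the paper interpolates a third point on the boundary of $\E(\delta)$ to handle mixed pairs. (One harmless slip: the inclusion ``$\cL\subset Lip_{\omega_1}$'' is stated backwards, since $\omega(t)\geq\tfrac{\omega(2)}{2}\,t$ gives $Lip_{\omega_1}\subset \cL$; but what you actually use, namely $|\xi-\zeta|/\omega(|\xi-\zeta|)\leq 2/\omega(2)$ to convert bounds on $h_n'$ into bounds on the $\omega$-quotient, is the correct direction.)
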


\begin{proof}
By definition, a function  $g\in\cI_{\omega}(\E)$  is continuous on $\overline{\D}$ and vanishes on $\E.$
Let $\{h_n\in\cH^{\infty}\ :\ n\in\N\}$ be a sequence of functions satisfying the hypotheses $i_1$, $i_2$ and $i_3$ of the lemma.
Since  $h_n$ is bounded and has an extension to a continuous function on $\overline{\D}\setminus\E,$  the product $gh_n$ possesses an extension to a function in $\cA(\D)$ with vanishing values on $\E,$ for every $n\in\N.$ Using the facts that $h_n\in\cH^{\infty},$ $g\in\cL$ and the following equality
\begin{eqnarray}\label{9amar2}
\frac{gh_n(\xi)-gh_n(\zeta)}{\omega(|\xi-\zeta|)}
=g(\xi)\frac{h_n(\xi)-h_n(\zeta)}{\omega(|\xi-\zeta|)}+h_n(\zeta)\frac{g(\xi)-g(\zeta)}{\omega(|\xi-\zeta|)},
\end{eqnarray}
where $\xi,\zeta\in\T\setminus\E$ are two distinct points,  we see easily that $gh_n\in\cL$ if and only if \eqref{9amar1} holds.
Thus $gh_n\in\cI_\omega(\E)$ if and only if \eqref{9amar1} is satisfied.
We now suppose additionally that $g\in\cJ_{\omega}(\E).$ If we show both
\begin{equation}\label{chams00}
\lim_{n\rightarrow \infty}\sup_{\xi,\zeta\in\T\setminus\E(\delta)\atop \xi\neq\zeta}\frac{|gh_n(\xi)-gh_n(\zeta)|}{\omega(|\xi-\zeta|)}=0
\end{equation}
for each $\delta\in]0,1],$ and
\begin{equation}\label{chams11}
\lim_{\delta\rightarrow 0}\sup_{\xi,\zeta\in\E(\delta)\atop\xi\neq\zeta}\frac{|gh_n(\xi)-gh_n(\zeta)|}{\omega(|\xi-\zeta|)}=0
\end{equation}
uniformly with respect to $n\in\N,$ then  $gh_n\in\cJ_{\omega}(\E),$ for sufficiently large numbers  $n\in\N,$ and $\displaystyle\lim_{n\rightarrow \infty}\|gh_n\|_{\omega}=0.$
Indeed, we suppose that \eqref{chams00} and \eqref{chams11} are satisfied.
Let $\varepsilon$ be a positive number. Using \eqref{chams11}, there exists  $\delta_\varepsilon\in]0,1]$  such that
\begin{equation}\label{chams31}
\sup_{\xi,\zeta\in\E(\delta_\varepsilon)\atop\xi\neq\zeta}\frac{|gh_n(\xi)-gh_n(\zeta)|}{\omega(|\xi-\zeta|)}\leq\varepsilon,\qquad \text{ for every }n\in\N.
\end{equation}
We observe that $\overline{\T\setminus\E(\delta_\varepsilon)}\subseteq \T\setminus\E(\delta_\varepsilon/2).$ Then,
by using \eqref{chams00}, there exists $n_\varepsilon\in\N$ such that
\begin{equation}\label{chams30}
\sup_{\xi,\zeta\in\overline{\T\setminus\E(\delta_\varepsilon)}\atop \xi\neq\zeta}\frac{|gh_n(\xi)-gh_n(\zeta)|}{\omega(|\xi-\zeta|)}\leq\varepsilon,\qquad  \text{ for every } n\geq n_\varepsilon.
\end{equation}
On the other hand, for two points $\xi_{_0}\in\E(\delta_\varepsilon)$ and $\zeta_{_0}\in\T\setminus\E(\delta_\varepsilon)$ we can associate a third point $z_{\xi_{_0},\zeta_{_0}}$ in the boundary of $\E(\delta_\varepsilon)$ such that $$\max\{|\xi_{_0}-z_{\xi_{_0},\zeta_{_0}}|,|z_{\xi_{_0},\zeta_{_0}}-\zeta_{_0}|\}\leq |\xi_{_0}-\zeta_{_0}|.$$
Then,
\begin{eqnarray}\label{chams4}
\frac{|gh_n(\xi_{_0})-gh_n(\zeta_{_0})|}{\omega(|\xi_{_0}-\zeta_{_0}|)}
 \leq \frac{|gh_n(\xi_{_0})-gh_n(z_{\xi_{_0},\zeta_{_0}})|}{\omega(|\xi_{_0}-z_{\xi_{_0},\zeta_{_0}}|)}+ \frac{|gh_n(z_{\xi_{_0},\zeta_{_0}})-gh_n(\zeta_{_0})|}{\omega(|z_{\xi_{_0},\zeta_{_0}}-\zeta_{_0}|)},
\end{eqnarray}
since $\omega$ is nondecreasing.
Combining the estimates \eqref{chams31}, \eqref{chams30} and \eqref{chams4}, we obtain
\begin{eqnarray}\label{chams5}
\sup_{\xi,\zeta\in\T\setminus\E\atop \xi\neq\zeta}\frac{|gh_n(\xi)-gh_n(\zeta)|}{\omega(|\xi-\zeta|)}
\leq 2\varepsilon,\qquad  \text{ for every } n\geq n_\varepsilon.
\end{eqnarray}
It follows $gh_n\in\cI_\omega(\E),$  for every $n\geq n_\varepsilon,$ and hence $gh_n\in\cJ_\omega(\E),$ since
we have assumed \eqref{chams11}. We now observe that for each $\delta\in]0,1],$
there exists $n_\delta\in\N$ such that
$\overline{\T\setminus\E(\delta)}\subseteq\Delta_n,$ for all numbers $n\geq n_\delta,$ since  $\bigcup\limits_{n\in\N}\Delta_n=\T\setminus\E.$
Then, for each $\delta\in]0,1],$ the sequence $\{h_n\ :\ n\in\N\}$ converges uniformly to $0$ on $\overline{\T\setminus\E(\delta)},$
 by using the hypothesis $i_3.$  Furthermore, by using hypothesis $i_1,$
 $$\lim_{\delta\rightarrow0}\sup_{\xi\in\E(\delta)\atop n\in\N}|gh_n(\xi)|=0.$$
We deduce  $\displaystyle\lim_{n\rightarrow \infty}\|gh_n\|_{\infty}=0.$
Hence $\displaystyle\lim_{n\rightarrow \infty}\|gh_n\|_{\omega}=0,$ by using once more \eqref{chams5}.

So, it remains to show that  \eqref{chams00} and \eqref{chams11} are fulfilled if   \eqref{chams2} holds. For this we fix two distinct points $\xi,\zeta\in\T\setminus\E$  and a number $\delta\in]0,1].$
We consider the following cases:

\textit{Case $A.$ } We suppose that $\displaystyle\xi,\zeta\in\T\setminus\E(\delta).$

\textit{\hspace{0.5cm}$A_1.$} If $\displaystyle|\xi-\zeta|\geq\frac{1}{2}d(\xi,\E),$ then
$$\omega(|\xi-\zeta|)\geq \omega(\frac{1}{2}d(\xi,\E))\geq \frac{1}{2}\omega(d(\xi,\E)),$$
by using the properties of $\omega.$ Thus
\begin{eqnarray}\label{radab3}
\Big|g(\xi)\frac{h_n(\xi)-h_n(\zeta)}{\omega(|\xi-\zeta|)}\Big|
\leq 4\|g\|_{\omega} \sup_{z\in\T\setminus\E(\delta)}|h_n(z)|, \qquad  n\geq n_\delta.
\end{eqnarray}

\textit{\hspace{0.5cm}$A_2.$} In the case where $\displaystyle|\xi-\zeta|\leq\frac{1}{2}d(\xi,\E),$  we consider the open arc $(\xi,\zeta)$ of $\T$ joining  the points $\xi$ and $\zeta,$ and such that $(\xi,\zeta)\subseteq\T\setminus\E(\delta).$ In all what follows we use the notation
$X\lesssim Y$ to mean that there exists a positive constant $c$ such that $X\leq cY,$  where $X$ and $Y$ are two nonnegative functions.
Since $t\mapsto t/\omega(t)$ is nondecreasing,
\begin{eqnarray}\nonumber
\Big|g(\xi)\frac{h_n(\xi)-h_n(\zeta)}{\omega(|\xi-\zeta|)}\Big|
&\leq& |g(\xi)|\frac{|h_n(\xi)-h_n(\zeta)|}{|\xi-\zeta|}\frac{|\xi-\zeta|}{\omega(|\xi-\zeta|)}
\\\nonumber&\lesssim& \frac{2\|g\|_{\infty}}{\omega(2)}\sup_{z\in(\xi,\zeta)}|h_{n}^{'}(z)|
\\\label{radab4}&\lesssim& \|g\|_{\infty}\sup_{z\in\T\setminus\E(\delta)}|h_{n}^{'}(z)|, \qquad  n\geq n_\delta.
\end{eqnarray}
From \eqref{radab3} and \eqref{radab4} we deduce that for each $\delta\in]0,1],$ there exists $n_\delta\in\N$ such that
 \begin{eqnarray}\label{radab5}
\sup_{\xi,\zeta\in\T\setminus\E(\delta)\atop \xi\neq\zeta}\Big|g(\xi)\frac{h_n(\xi)-h_n(\zeta)}{\omega(|\xi-\zeta|)}\Big|
\lesssim \|g\|_{\omega}\big( \sup_{z\in\T\setminus\E(\delta)}|h_n(z)|+\sup_{z\in\T\setminus\E(\delta)}|h_{n}^{'}(z)| \big),
\end{eqnarray}
for all numbers  $n\geq n_\delta.$
On the other hand it is plain that
 \begin{eqnarray}\label{radab5'}
\sup_{\xi,\zeta\in\T\setminus\E(\delta)\atop \xi\neq\zeta}\Big|h_n(\zeta)\frac{g(\xi)-g(\zeta)}{\omega(|\xi-\zeta|)}\Big|
\lesssim \|g\|_{\omega} \sup_{z\in\T\setminus\E(\delta)}|h_n(z)|, \qquad n\in\N.
\end{eqnarray}
Taking account of the hypothesis  $i_3,$  the equality \eqref{chams00} is deduced by combining together \eqref{9amar2}, \eqref{radab5} and \eqref{radab5'}.

\textit{Case $B.$ } Now, we suppose that $\displaystyle\xi,\zeta\in\E(\delta)\setminus\E.$
If we suppose additionally that $\displaystyle|\xi-\zeta|\geq\frac{1}{2}d(\xi,\E),$ then
\begin{eqnarray}\nonumber
\Big|g(\xi)\frac{h_n(\xi)-h_n(\zeta)}{\omega(|\xi-\zeta|)}\Big|
&\leq& 4\sup_{n\in\N}\|h_n\|_{\infty} \sup_{\xi\in\E(\delta)}\frac{|g(\xi)|}{\omega(d(\xi,\E))}
\\\label{radab8}&=&o(1),\qquad \text{ as }\delta\rightarrow0,
\end{eqnarray}
by using the hypothesis $i_1$ and the fact that  $g\in\cJ_\omega(\E).$
It follows
\begin{equation}\label{radab9}
\lim_{\delta\rightarrow 0}\sup_{\xi,\zeta\in\E(\delta)\setminus\E\atop\xi\neq\zeta
\text{ and }
|\xi-\zeta|\geq\frac{1}{2}d(\xi,\E)}\Big|g(\xi)\frac{h_n(\xi)-h_n(\zeta)}{\omega(|\xi-\zeta|)}\Big|=0,
\end{equation}
and hence
\begin{eqnarray}\label{radab10}
\lim_{\delta\rightarrow 0}\sup_{\xi,\zeta\in\E(\delta)\setminus\E\atop\xi\neq\zeta}\Big|g(\xi)\frac{h_n(\xi)-h_n(\zeta)}{\omega(|\xi-\zeta|)}\Big|=0,
\end{eqnarray}
uniformly with respect to $n\in\N,$ since we have assumed \eqref{chams2}.
On the other hand, by using again the hypothesis $i_1$ and the fact that $g\in\cJ_\omega(\E),$ it is easily seen that
 \begin{eqnarray}\label{radab10'}
\lim_{\delta\rightarrow 0}\sup_{\xi,\zeta\in\E(\delta)\setminus\E\atop\xi\neq\zeta}\Big|h_n(\zeta)\frac{g(\xi)-g(\zeta)}{\omega(|\xi-\zeta|)}\Big|
=0,
\end{eqnarray}
uniformly with respect to $n\in\N.$
Hence, the equality \eqref{chams11} is deduced by combining together  \eqref{9amar2}, \eqref{radab10} and \eqref{radab10'}.
The proof of Lemma \ref{hilbert} is completed.
\end{proof}

Let $f\in\cA(\D)$ be an outer function and let $\Gamma\in\Omega_{_{\E}},$ for some closed subset $\E$ of $\T.$ We define the following Korenblum's outer function
\begin{equation}\label{korenblum}
f_{\Gamma}(z):=\exp\Big\{\frac{1}{2\pi}\int_{\Gamma}\frac{\xi+z}{\xi-z}\log |f|(\xi)|d\xi|\Big\},\qquad z\in\D.
\end{equation}
We observe that $f_{\Gamma}$ belongs to $\cH^\infty$ with boundary values satisfying
$$|f_\Gamma|(\xi)=\left\{
  \begin{array}{ll}
   |f|(\xi), \quad \ \hbox{ if } \xi\in\Gamma,  \\
    1, \qquad \quad \ \   \hbox{if } \xi\in\T\setminus\overline{\Gamma}.
  \end{array}
\right.$$
We also observe that $f_\Gamma=f\times f^{-1}_{\T\setminus\Gamma}$ and that $f_{\Gamma}$ has an extension  both to an analytic function on $\C\setminus\overline{\Gamma}$ and to a continuous one on $\overline{\D}\setminus\E.$ For a function $f\in\cA(\D),$
we denote by $\E_f$ the boundary zero set of $f.$
The following lemma will be used in the proofs of Proposition \ref{lem22} and  Proposition \ref{lem2}.

\begin{lem}\label{fliyo}
Let $f\in\cL$ be an outer function, where $\omega$ is a modulus of continuity satisfying \eqref{cond}. Let $g\in\cL$ be a function and let $\E$ be a closed subset of $\E_g.$ Then  $g f_{{\Gamma}}\in\cL$ and
\begin{equation}\label{fig11}
\|g f_{{\Gamma}}\|_\omega\leq c_{\omega,f}\|g\|_\omega, \qquad \text{ for every }\Gamma\in\Omega_{_{\E}},
\end{equation}
where $c_{\omega,f}>0$ is a constant not depending on both $\Gamma$ and $g.$
If moreover $g\in\cJ_{\omega}(\E),$  then
\begin{equation}\label{fig1}
g f_{{\Gamma}}\in\cJ_{\omega}(\E), \qquad \text{ for every }\Gamma\in\Omega_{_{\E}},
\end{equation}
and
\begin{equation}\label{fig2}
\lim\limits_{n\rightarrow \infty}\|g f_{\Gamma_n}-g\|_{\omega}=0,
\end{equation}
where $\Gamma_{n}:=\bigcup\limits_{m> n}(a_m,b_m),$ for each $n\in\N.$
\end{lem}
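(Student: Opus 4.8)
The plan is to derive the three assertions from the approximation device of Lemma~\ref{hilbert}, after reducing each one to a single pointwise estimate on $f_{\Gamma}$ that is uniform in $\Gamma\in\Omega_{_{\E}}$. Throughout I will use two elementary facts. First, since $g\in\cL$ vanishes on $\E\subseteq\E_g$, comparing $g(\xi)$ with its value at the nearest point of $\E$ gives $|g(\xi)|\leq\|g\|_{\omega}\,\omega(d(\xi,\E))$ for all $\xi\in\T\setminus\E$. Second, $f_{\Gamma}$ is outer with $|f_{\Gamma}|\leq\max(\|f\|_{\infty},1)=:M_f$ on $\T$, uniformly in $\Gamma$. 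Applying the decomposition \eqref{9amar2} to $h=f_{\Gamma}-1$ shows that $gf_{\Gamma}\in\cL$ as soon as \eqref{9amar1} holds, the term $f_{\Gamma}(\zeta)\frac{g(\xi)-g(\zeta)}{\omega(|\xi-\zeta|)}$ being bounded by $M_f\|g\|_{\omega}$; moreover $gf_{\Gamma}$ is then continuous on $\overline{\D}$ and vanishes on $\E$, so $gf_{\Gamma}\in\cI_{\omega}(\E)$. Thus \eqref{fig11} follows once I prove
\begin{equation*}
\sup_{\xi,\zeta\in\T\setminus\E\atop\xi\neq\zeta}\Big|g(\xi)\frac{f_{\Gamma}(\xi)-f_{\Gamma}(\zeta)}{\omega(|\xi-\zeta|)}\Big|\leq c_{\omega,f}\|g\|_{\omega},\qquad\Gamma\in\Omega_{_{\E}}.\tag{A}
\end{equation*}

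To establish (A) I would split the pair $\xi,\zeta$ as in the proof of Lemma~\ref{hilbert}. When $|\xi-\zeta|\geq\tfrac12 d(\xi,\E)$ the bound is immediate: from $|f_{\Gamma}(\xi)-f_{\Gamma}(\zeta)|\leq 2M_f$, $d(\xi,\E)\leq 2|\xi-\zeta|$ and \eqref{omega} one gets a multiple of $\|g\|_{\omega}\,\omega(d(\xi,\E))/\omega(|\xi-\zeta|)\lesssim\|g\|_{\omega}$, with no condition on $\omega$. All the difficulty sits in the complementary regime, where $\xi,\zeta$ belong to the same arc of $\T\setminus\E$ and $|\xi-\zeta|\leq\tfrac12 d(\xi,\E)$; there $f_{\Gamma}$ is analytic on the arc joining $\xi$ to $\zeta$, and $|f_{\Gamma}(\xi)-f_{\Gamma}(\zeta)|\leq|\xi-\zeta|\sup_{[\xi,\zeta]}|f_{\Gamma}'|$. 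Inserting $|g(\xi)|\leq\|g\|_{\omega}\omega(d(\xi,\E))$ and using that $t\mapsto t/\omega(t)$ is nondecreasing, so that $\frac{\omega(d(\xi,\E))}{\omega(|\xi-\zeta|)}\cdot\frac{|\xi-\zeta|}{d(\xi,\E)}\leq 1$, reduces (A) to the clean derivative bound
\begin{equation*}
|f_{\Gamma}'(z)|\leq\frac{c_{\omega,f}}{d(z,\E)},\qquad z\in\T\setminus\E,\tag{B}
\end{equation*}
valid uniformly in $\Gamma\in\Omega_{_{\E}}$.

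Estimate (B) is the main obstacle, and it is the only place where hypothesis \eqref{cond} is needed. I would start from the logarithmic derivative $\frac{f_{\Gamma}'}{f_{\Gamma}}(z)=\frac1{\pi}\int_{\Gamma}\frac{\xi}{(\xi-z)^2}\log|f|(\xi)\,|d\xi|$ and estimate $|f_{\Gamma}(z)|\int_{\Gamma}\frac{|\log|f|(\xi)|}{|\xi-z|^2}|d\xi|$ by cutting $\Gamma$ into dyadic pieces where $|\xi-z|\sim 2^{k}d(z,\E)$. The part of $\log|f|$ that stays bounded contributes a harmless $O(1/d(z,\E))$; the essential part is where $|f|$ is small, and there the factor $|f_{\Gamma}(z)|$, which is exponentially small precisely where $\Gamma$ carries a large amount of $-\log|f|$ near $z$, must absorb the growth of the integral. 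Turning this compensation into the clean factor $1/d(z,\E)$, with no logarithmic loss, is exactly what the comparison of $\omega$ at the scales $t$ and $t^{2}$ furnished by \eqref{cond} provides; carrying it out uniformly in $\Gamma$, i.e. independently of which arcs are switched on, is the most delicate bookkeeping of the proof.

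With (A) in hand, \eqref{fig11} is proved. For the refined statements I would keep $g\in\cJ_{\omega}(\E)$ and note that in the near regime the above reduction actually gives $|g(\xi)|\frac{|f_{\Gamma}(\xi)-f_{\Gamma}(\zeta)|}{\omega(|\xi-\zeta|)}\leq c_{\omega,f}\,\frac{|g(\xi)|}{\omega(d(\xi,\E))}$, and $\frac{|g(\xi)|}{\omega(d(\xi,\E))}\to 0$ as $d(\xi,\E)\to 0$ by the very definition of $\cJ_{\omega}(\E)$; this is \eqref{chams2} for $h=f_{\Gamma}-1$, uniformly in $\Gamma$. Handling the remaining $g$-difference term exactly as in Case $B$ of the proof of Lemma~\ref{hilbert} then yields the defining limit of $\cJ_{\omega}(\E)$ for $gf_{\Gamma}$, which is \eqref{fig1} for every $\Gamma$. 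Finally, for \eqref{fig2} I would apply Lemma~\ref{hilbert} to $h_n:=f_{\Gamma_n}-1$ with $\Delta_n:=\bigcup_{m\leq n}(a_m,b_m)$: properties $i_1$ and $i_2$ are clear, while $i_3$ holds because for $n\geq p$ the set $\Gamma_n$ stays at positive distance from any compact $\K\subseteq\Delta_p$, so both $f_{\Gamma_n}-1$ and $f_{\Gamma_n}'$ are controlled on $\K$ by $\int_{\Gamma_n}|\log|f||\,|d\xi|$, which tends to $0$ since $\log|f|\in L^{1}(\T)$. As (A) and its vanishing version hold uniformly over $\{\Gamma_n\}$, Lemma~\ref{hilbert} gives $\|gf_{\Gamma_n}-g\|_{\omega}=\|gh_n\|_{\omega}\to 0$, which is \eqref{fig2}.
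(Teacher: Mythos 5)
Your reduction of \eqref{fig11} to the pointwise estimate (A), and the use of Lemma \ref{hilbert} to get \eqref{fig1} and \eqref{fig2} afterwards, follows the paper's outline. But the core of your argument rests on the derivative bound (B), $|f_{\Gamma}'(z)|\leq c_{\omega,f}/d(z,\E)$ uniformly in $\Gamma$, and this bound is both unproven in your sketch and false in general. On $\T\setminus\overline{\Gamma}$ one has $|f_{\Gamma}|\equiv 1$, so there is no "exponentially small factor $|f_{\Gamma}(z)|$" available to compensate anything: the derivative there is exactly $\big|\frac{1}{\pi}\int_{\Gamma}\frac{\xi}{(\xi-z)^{2}}\log|f|(\xi)\,|d\xi|\big|$, and since $f\in\cL$ is only known to have $\log|f|\in L^{1}(\T)$ (outer functions in $\cL$ may vanish on large subsets of $\T$; this is the whole point of weights like $\chi_{\beta}$), the only uniform bound is $\|\log|f|\|_{L^{1}}/d^{2}(z,\E)$. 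Concretely, if $f$ vanishes at an endpoint $a$ of $\Gamma$ with $|f|(\xi)\asymp\omega(|\xi-a|)$, the mass of $-\log|f|$ on $\Gamma\cap\{|\xi-a|\leq d\}$ is of order $d\log\frac{1}{\omega(d)}$, so at points $z$ at distance $d$ from $a$ the derivative is of order $\frac{1}{d}\log\frac{1}{\omega(d)}\gg\frac{1}{d}$; condition \eqref{cond} is a hypothesis on $\omega$ alone and cannot repair this. Consequently your near regime $\frac14 d^{2}(\xi,\E)\lesssim|\xi-\zeta|\leq\frac12 d(\xi,\E)$ is genuinely out of reach of your tools: with the true bound $1/d^{2}$, at $|\xi-\zeta|\sim d$ your chain of inequalities produces $\|g\|_{\omega}\,\omega(d)\cdot\frac{d}{\omega(d)}\cdot\frac{1}{d^{2}}=\|g\|_{\omega}/d$, which blows up.

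The paper's fix is to move the case split from scale $d(\xi,\E)$ to scale $d^{2}(\xi,\E)$, and this is precisely where \eqref{cond} enters. For $|\xi-\zeta|\geq\frac14 d^{2}(\xi,\E)$ the trivial bound $|f_{\Gamma}(\xi)-f_{\Gamma}(\zeta)|\leq 2\max(1,\|f\|_{\infty})$ already suffices, because $\omega(|\xi-\zeta|)\geq\frac14\omega(d^{2}(\xi,\E))\geq\frac14\eta_{\omega}\,\omega(d(\xi,\E))$ absorbs $|g(\xi)|\leq\|g\|_{\omega}\omega(d(\xi,\E))$; for $|\xi-\zeta|\leq\frac14 d^{2}(\xi,\E)$ the crude estimate $|f_{\Gamma}'|\lesssim c_{f}/d^{2}$ is enough, since $\frac{|\xi-\zeta|}{\omega(|\xi-\zeta|)}\cdot\frac{1}{d^{2}}\leq\frac{1}{4\omega(\frac14 d^{2})}\lesssim\frac{\eta_{\omega}^{-1}}{\omega(d)}$. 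A second, independent gap in your argument: your claim that in the near regime "$f_{\Gamma}$ is analytic on the arc joining $\xi$ to $\zeta$" is false when that arc lies inside $\Gamma$; the analytic extension of $f_{\Gamma}$ exists only across $\T\setminus\overline{\Gamma}$, while on $\Gamma$ its modulus equals $|f|$, and $f\in\cL$ need not be differentiable on $\T$ at all. The paper handles this by writing $f_{\Gamma}=f\cdot f^{-1}_{\T\setminus\Gamma}$ on such arcs, estimating the $f$-difference by $\|f\|_{\omega}$ and applying the derivative argument only to $f_{\T\setminus\Gamma}$, which \emph{is} analytic across $\Gamma$. Without these two corrections (the $d^{2}$ threshold and the $B_{2}$-type decomposition), the proof of \eqref{fig11} — and hence of \eqref{fig1} and \eqref{fig2}, which you correctly derive from it via Lemma \ref{hilbert} — does not go through.
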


\begin{proof} Let $g\in\cL$ be a function. Let $\Gamma\in\Omega_{_{\E}},$ where  $\E$ is a fixed closed subset of $\E_g.$  Since $f_{{\Gamma}}\in\cH^\infty$ and has a continuous extension on $\overline{\D}\setminus\E,$
then $gf_{{\Gamma}}\in\cA(\D)$ with vanishing values on $\E.$
Let $\xi,\zeta\in\T\setminus\E$ be two distinct points such that $d(\xi,\E)\leq d(\zeta,\E).$
Two cases are possible;

\textit{Case $A.$} We suppose that
 $\displaystyle|\xi-\zeta|\geq \frac{1}{4}d^2(\xi,\E).$ Since $\omega$ satisfies \eqref{cond},
\begin{eqnarray}\nonumber
\Big|g(\xi)\frac{f_{\Gamma}(\xi)-f_{\Gamma}(\zeta)}{\omega(|\xi-\zeta|)}\Big|&\leq&
2\max\{1,\|f\|_{\infty}\}\frac{|g(\xi)|}{\omega(|\xi-\zeta|)}
\\&\leq&\label{bi1} 4\eta_{\omega}^{-1}\max\{1,\|f\|_{\infty}\}\frac{|g(\xi)|}{\omega(d(\xi,\E))}
\\&\leq&\label{bi11} 4\eta_{\omega}^{-1}\max\{1,\|f\|_{\infty}\}\|g\|_{\omega}.
\end{eqnarray}

\textit{Case $B.$} We suppose that
 $\displaystyle|\xi-\zeta|\leq \frac{1}{4}d^2(\xi,\E).$ In this case, we let $(\xi,\zeta)$ be the arc such that
 $(\xi,\zeta)\subseteq\T\setminus\E.$ We observe that
$$d(z,\E)\geq d(\xi,\E), \qquad \text{ for all }z\in(\xi,\zeta).$$

 \textit{\hspace{0.5cm}$B_1.$} We assume that $\displaystyle(\xi,\zeta)\subseteq \T\setminus\overline{\Gamma}.$
We simply compute
$$ \Big|\frac{f_{\Gamma}(\xi)-f_{\Gamma}(\zeta)}{\xi-\zeta}\Big|\lesssim \sup_{z\in(\xi,\zeta)}|f^{'}_{\Gamma}(z)|\leq\frac{c_{_f}}{d^2(\xi,\E)},$$
where $c_f>0$ is a constant not depending  on both $\Gamma$ and the points $\xi$ and $\zeta.$
Since $t\mapsto t/\omega(t)$ is nondecreasing, $\omega$ satisfies \eqref{cond} and $\displaystyle|\xi-\zeta|\leq \frac{1}{4}d^2(\xi,\E),$ we get
\begin{eqnarray}\nonumber
\Big|g(\xi)\frac{f_{\Gamma}(\xi)-f_{\Gamma}(\zeta)}{\omega(|\xi-\zeta|)}\Big|&=&
\frac{|g(\xi)||\xi-\zeta|}{\omega(|\xi-\zeta|)}\frac{|f_{\Gamma}(\xi)-f_{\Gamma}(\zeta)|}{|\xi-\zeta|}
\\\nonumber&\leq&c_f\frac{|g(\xi)||\xi-\zeta|}{\omega(|\xi-\zeta|)d^2(\xi,\E)}
\\\label{bi3}&\leq&\frac{c_f\eta^{-1}_{\omega}}{2}\frac{|g(\xi)|}{\omega(d(\xi,\E))}
\\\label{bi33}&\leq&\frac{c_f\eta^{-1}_{\omega}}{2}\|g\|_{\omega}.
\end{eqnarray}

\textit{\hspace{0.5cm} $B_2.$} We now assume that $\displaystyle(\xi,\zeta)\subseteq \Gamma.$
We have  $|f_{\T\setminus\Gamma}(\xi)|=|f_{\T\setminus\Gamma}(\zeta)|=1.$ The inequality \eqref{bi3} and the following equality
\begin{eqnarray}\nonumber
f_{\Gamma}(\xi)-f_{\Gamma}(\zeta)&=&
f(\xi)f^{-1}_{\T\setminus\Gamma}(\xi)-f(\zeta)f^{-1}_{\T\setminus\Gamma}(\zeta)\\\nonumber&=&
f^{-1}_{\T\setminus\Gamma}(\xi)\big(f(\xi)-f(\zeta)\big)+f(\zeta)\big(f^{-1}_{\T\setminus\Gamma}(\xi)-f^{-1}_{\T\setminus\Gamma}(\zeta)\big)\\\nonumber&=&
f^{-1}_{\T\setminus\Gamma}(\xi)\big(f(\xi)-f(\zeta)\big)\\\label{revien}
&&-f(\zeta)f^{-1}_{\T\setminus\Gamma}(\xi)f^{-1}_{\T\setminus\Gamma}(\zeta)
\big(f_{\T\setminus\Gamma}(\xi)-f_{\T\setminus\Gamma}(\zeta)\big)
\end{eqnarray}
give the following estimate
\begin{eqnarray}\nonumber
&&\Big|g(\xi)\frac{f_{\Gamma}(\xi)-f_{\Gamma}(\zeta)}{\omega(|\xi-\zeta|)}\Big|
\\\nonumber&\leq&
|g(\xi)|\frac{|f(\xi)-f(\zeta)|}{\omega(|\xi-\zeta|)}
+|f(\zeta)||g(\xi)|\frac{|f_{\T\setminus\Gamma}(\xi)-f_{\T\setminus\Gamma}(\zeta)|}{\omega(|\xi-\zeta|)}
\\\label{bi4}&\leq&
|g(\xi)|\|f\|_{\omega}+ \frac{c_f\eta^{-1}_{\omega}}{2}\|f\|_{\infty}\frac{|g(\xi)|}{\omega(d(\xi,\E))}
\\\label{bi44}&\leq&(1+\frac{c_f\eta^{-1}_{\omega}}{2})\|g\|_{\omega}\|f\|_{\omega}.
\end{eqnarray}
Joining together the estimates \eqref{bi11}, \eqref{bi33} and \eqref{bi44}, we obtain
\begin{eqnarray*}
\sup_{\xi,\zeta\in\T\setminus\E \atop \xi\neq\zeta}\Big|g(\xi)\frac{f_{\Gamma}(\xi)-f_{\Gamma}(\zeta)}{\omega(|\xi-\zeta|)}\Big|
\leq C_{\omega,f} \|g\|_{\omega},\qquad \Gamma\in\Omega_{_{\E}},
\end{eqnarray*}
where $C_{\omega,f}>0$ is a constant not depending on  $\Gamma$ and $g.$ On the other hand, it is obvious that
\begin{eqnarray*}
\sup_{\xi,\zeta\in\T\setminus\E \atop \xi\neq\zeta}\Big|f_{\Gamma}(\zeta)\frac{g(\xi)-g(\zeta)}{\omega(|\xi-\zeta|)}\Big|
\leq \|f\|_{\infty}\|g\|_{\omega},\qquad \Gamma\in\Omega_{_{\E}}.
\end{eqnarray*}
Therefore
\begin{eqnarray*}
\sup_{\xi,\zeta\in\T\setminus\E \atop \xi\neq\zeta}\Big|\frac{gf_{\Gamma}(\xi)-gf_{\Gamma}(\zeta)}{\omega(|\xi-\zeta|)}\Big|
\leq \big(C_{\omega,f}+\|f\|_{\infty}\big) \|g\|_{\omega},\qquad \Gamma\in\Omega_{_{\E}},
\end{eqnarray*}
which proves \eqref{fig11}, and so $gf_\Gamma\in\cL.$

We now suppose additionally that $g\in\cJ_{\omega}(\E).$ Then, by using  \eqref{bi1}, \eqref{bi3} and \eqref{bi4},
\begin{eqnarray}\label{atay2}
\lim_{\delta\rightarrow 0}\sup_{\xi,\zeta\in\E(\delta)\setminus\E\atop\xi\neq\zeta}\Big|g(\xi)\frac{f_{\Gamma}(\xi)-f_{\Gamma}(\zeta)}{\omega(|\xi-\zeta|)}\Big|=0,
\end{eqnarray}
uniformly with respect to all $\Gamma\in\Omega_{_{\E}}.$ On the other hand,  it is plain that
\begin{eqnarray}\label{atay3}
\lim_{\delta\rightarrow 0}\sup_{\xi,\zeta\in\E(\delta)\setminus\E\atop\xi\neq\zeta}\Big|f_\Gamma(\xi)\frac{g(\xi)-g(\zeta)}{\omega(|\xi-\zeta|)}\Big|=0,
\end{eqnarray}
uniformly with respect to all $\Gamma\in\Omega_{_{\E}},$ since we have supposed $g\in\cJ_{\omega}(\E).$
Therefore  $gf_{\Gamma}\in\cJ_{\omega}(\E).$
Taking account of the estimate \eqref{atay2}, we can now apply Lemma \ref{hilbert} with $\Delta_n=\T\setminus\{\overline{\Gamma_n}\cup\E\}$ and $h_n=1-f_{\Gamma_n}$
to deduce \eqref{fig2}.
This finishes the proof of Lemma \ref{fliyo}.
\end{proof}

\section{\bf Proof of Proposition \ref{lem22}. \label{sect4}}

The proof of the proposition will be given in subsection \ref{sect42}. Before this we need to point out some technical results.
\subsection{Technical Lemmas\label{sect41}}

We start with the following classical lemma.

\begin{lem}\label{lem3}
Let $g\in\cJ_{\omega}(\E)$ be a function, where $\omega$ is an arbitrary modulus of continuity.
Let $A:=\{a_k\ :\ 0\leq k\leq K\}$ be a finite sequence of points in $\E$ not necessarily distinct, where  $K\in\N.$   Then
\begin{equation}\label{shfang}
\lim\limits_{\rho\rightarrow 0^+}\|\varphi_{\rho,K}g-g\|_{\omega}=0,
\end{equation}
 where
$$\varphi_{\rho,K}(z):=\prod_{k=0}^{k=K}\frac{z\overline{a_k}-1}{z\overline{a_k}-1-\rho},\qquad z\in\D,$$
and $\rho\leq 1$ is a positive number.
\end{lem}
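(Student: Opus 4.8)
The plan is to derive \eqref{shfang} from Lemma \ref{hilbert}. Fix an arbitrary sequence of positive reals $\rho_n\le 1$ with $\rho_n\to 0$, put $h_n:=\varphi_{\rho_n,K}-1$, and let $\{\Delta_n\}\subset\Omega_\E$ be any increasing sequence of finite unions of complementary arcs with $\bigcup_n\Delta_n=\T\setminus\E$. Since $gh_n=\varphi_{\rho_n,K}g-g$, the conclusion $\lim_n\|gh_n\|_\omega=0$ of Lemma \ref{hilbert} is exactly $\lim_n\|\varphi_{\rho_n,K}g-g\|_\omega=0$; as the null sequence $\{\rho_n\}$ is arbitrary, this yields \eqref{shfang}. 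Thus it suffices to verify the hypotheses $i_1$, $i_2$, $i_3$ together with the uniform estimate \eqref{chams2} for the family $\{h_n\}$.

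All the needed properties of $\varphi_{\rho,K}$ follow from two elementary observations about its $k$-th factor $\varphi_k(z):=(z\overline{a_k}-1)/(z\overline{a_k}-1-\rho)$. First, $z\overline{a_k}-1=\overline{a_k}(z-a_k)$ since $|a_k|=1$, so $|z\overline{a_k}-1|=|z-a_k|$. Second, $\re(z\overline{a_k})\le|z|\le 1$ on $\overline\D$, whence $\re(z\overline{a_k}-1)\le 0$ and therefore $|z\overline{a_k}-1-\rho|^2\ge|z-a_k|^2+\rho^2\ge\max\{|z-a_k|^2,\rho^2\}$. In particular the poles $(1+\rho)a_k$ lie outside $\overline\D$, so each $\varphi_{\rho,K}$ is analytic on a neighborhood of $\overline\D$; this gives $i_2$ for every $\Delta_n$. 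From $|z\overline{a_k}-1|\le|z\overline{a_k}-1-\rho|+\rho$ and $|z\overline{a_k}-1-\rho|\ge\rho$ one gets $|\varphi_k|\le 2$, hence $\|\varphi_{\rho,K}\|_\infty\le 2^{K+1}$ and $\sup_n\|h_n\|_\infty<+\infty$, which is $i_1$. For $i_3$, on a compact $\K\subseteq\Delta_p$ we have $d(\K,\E)>0$, and since $\varphi_k-1=\rho/(z\overline{a_k}-1-\rho)$ and $\rho_n\to 0$, both $h_n$ and $h_n'=\varphi_{\rho_n,K}'$ tend to $0$ uniformly on $\K$.

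The crux is \eqref{chams2}, and the key step is a bound on $\varphi_{\rho,K}'$ that is uniform in $\rho$. Differentiating gives $\varphi_k'(z)=-\rho\,\overline{a_k}/(z\overline{a_k}-1-\rho)^2$, so, using $|z\overline{a_k}-1-\rho|^2\ge|z-a_k|^2+\rho^2\ge\rho\,|z-a_k|$, one finds $|\varphi_k'(z)|\le 1/|z-a_k|\le 1/d(z,\E)$; the spurious factor $\rho$ cancels. With $|\varphi_j|\le 2$ the product rule then yields $|\varphi_{\rho,K}'(z)|\lesssim 1/d(z,\E)$, with an implicit constant depending only on $K$. Now fix $\xi,\zeta\in\E(\delta)\setminus\E$ with $|\xi-\zeta|\le\tfrac12 d(\xi,\E)$. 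The short arc $(\xi,\zeta)$ stays in $\T\setminus\E$ and every $z$ on it satisfies $d(z,\E)\ge\tfrac12 d(\xi,\E)$, so $\sup_{z\in(\xi,\zeta)}|\varphi_{\rho,K}'(z)|\lesssim 1/d(\xi,\E)$. Combining $|\varphi_{\rho,K}(\xi)-\varphi_{\rho,K}(\zeta)|\lesssim|\xi-\zeta|\,\sup_{z\in(\xi,\zeta)}|\varphi_{\rho,K}'(z)|$ with the monotonicity of $t\mapsto t/\omega(t)$, i.e. $|\xi-\zeta|/\omega(|\xi-\zeta|)\le d(\xi,\E)/\omega(d(\xi,\E))$, I obtain
\begin{equation*}
\Big|g(\xi)\frac{h_n(\xi)-h_n(\zeta)}{\omega(|\xi-\zeta|)}\Big|
\lesssim |g(\xi)|\,\frac{d(\xi,\E)}{\omega(d(\xi,\E))}\cdot\frac{1}{d(\xi,\E)}
=\frac{|g(\xi)|}{\omega(d(\xi,\E))}.
\end{equation*}
Because $g\in\cJ_\omega(\E)$ vanishes on $\E$, evaluating the $\cJ_\omega(\E)$-modulus against the point of $\E$ nearest to $\xi$ shows $\sup_{\xi\in\E(\delta)\setminus\E}|g(\xi)|/\omega(d(\xi,\E))\to 0$ as $\delta\to 0$. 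Since the right-hand side above is independent of $\rho$, this is exactly \eqref{chams2} uniformly in $n$, and Lemma \ref{hilbert} applies.

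The only genuinely delicate point, and the one I would treat most carefully, is this $\rho$-uniform derivative estimate: the naive bound $|\varphi_k'|=\rho/|z\overline{a_k}-1-\rho|^2\le\rho/|z-a_k|^2$ retains a factor $\rho$ with no lower control in the relevant range, whereas the inequality $|z\overline{a_k}-1-\rho|\ge\rho$ eliminates $\rho$ altogether and leaves the factor $d(\xi,\E)^{-1}$, which is then absorbed by $d(\xi,\E)/\omega(d(\xi,\E))$ and by the smallness of $|g(\xi)|/\omega(d(\xi,\E))$ furnished by $g\in\cJ_\omega(\E)$.
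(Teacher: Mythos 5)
Your proof is correct and takes essentially the same route as the paper's: both apply Lemma \ref{hilbert} with $\Delta_n$ exhausting $\T\setminus\E$ to $h_n=\varphi_{\rho_n,K}-1$, and reduce \eqref{chams2} to the $\rho$-uniform bound $|\varphi_{\rho,K}'(z)|\lesssim 1/d(z,\E)$ on $\T\setminus\E$, which together with the monotonicity of $t\mapsto t/\omega(t)$ and the fact that $|g(\xi)|/\omega(d(\xi,\E))\to 0$ near $\E$ (from $g\in\cJ_{\omega}(\E)$) gives the desired uniform estimate. The only difference is one of detail: you spell out the verification of hypotheses $i_1$--$i_3$ and the cancellation of the factor $\rho$ in the derivative bound, which the paper asserts without proof.
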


\begin{proof}
We will apply Lemma \ref{hilbert} with $\Delta_n=\T\setminus\E,$ for every $n\in\N.$  To prove \eqref{shfang} it is sufficient to show that
\begin{equation}\label{radab2}
\lim_{\delta\rightarrow 0}\sup_{\xi,\zeta\in\E(\delta)\setminus\E\atop\xi\neq\zeta\text{ and }
|\xi-\zeta|\leq\frac{1}{2}d(\xi,\E)}\Big|g(\xi)\frac{h_{\rho}(\xi)-h_{\rho}(\zeta)}{\omega(|\xi-\zeta|)}\Big|=0,
\end{equation}
uniformly with respect to all positive numbers $\rho\leq 1,$ where $h_{\rho}:=1-\varphi_{\rho,K}.$ For this, we let $\xi,\zeta\in\E(\delta)\setminus\E$ be two distinct points such that $|\xi-\zeta|\leq\frac{1}{2}d(\xi,\E).$ We consider the arc $(\xi,\zeta)$ satisfying $(\xi,\zeta)\subseteq\T\setminus\E.$ A simple calculation gives
$$d(z,\E)\geq \frac{1}{2}d(\xi,\E), \qquad \text{ for all }z\in(\xi,\zeta),$$
and
$$|\varphi^{'}_{\rho,K}(z)|\leq \frac{K}{d(z,\E)},\qquad z\in\T\setminus\E.$$
Since $t\mapsto t/\omega(t)$ is nondecreasing, we obtain
\begin{eqnarray}\nonumber
\Big|g(\xi)\frac{h_{\rho}(\xi)-h_{\rho}(\zeta)}{\omega(|\xi-\zeta|)}\Big|&=&
|g(\xi)|\frac{|\varphi_{\rho,K}(\xi)-\varphi_{\rho,K}(\zeta)|}{|\xi-\zeta|}\frac{|\xi-\zeta|}{\omega(|\xi-\zeta|)}
\\\nonumber&\lesssim&  |g(\xi)|\sup_{z\in(\xi,\zeta)}|\varphi^{'}_{\rho,K}(z)|\frac{|\xi-\zeta|}{\omega(|\xi-\zeta|)}
\\\nonumber&\leq& 2K \frac{|g(\xi)|}{d(\xi,\E)}\frac{|\xi-\zeta|}{\omega(|\xi-\zeta|)}
\\\label{eq99}&\leq& 2K \frac{|g(\xi)|}{\omega(d(\xi,\E))}.
\end{eqnarray}
Since by hypothesis $g\in\cJ_{\omega}(\E),$  the desired result \eqref{radab2} follows from \eqref{eq99}.
\end{proof}

The next lemma will be used to simplify the proof of the following one.

\begin{lem}\label{hamam}
Let $f\in\cL$ be an outer function, where $\omega$ is a modulus of continuity satisfying \eqref{cond}.
Let  $\gamma:=(a,b)$ be an open arc joining two points $a,b\in \E_{f}$ such that $\gamma\subseteq\T\setminus \E_{f}.$
Let $\varepsilon$ be a nonzero real number such that  $\gamma_{\varepsilon}:=(a e^{i\varepsilon},b e^{-i\varepsilon})\subset\gamma.$
Then
\begin{equation*}
\lim\limits_{\varepsilon\rightarrow0}\|\phi^{2}_{{\rho,0}}\phi^{}_{{\rho,\varepsilon}}
f_{{\gamma\setminus\gamma_{\varepsilon}}}-\phi^{3}_{{\rho,0}}\|_{\omega}=0,
\end{equation*}
where $\rho$ is a fixed positive number and
$$\displaystyle  \phi_{{\rho,\varepsilon}}(z):=\Big(\frac{z\overline{a}e^{-i\varepsilon}-1}
{z\overline{a}e^{-i\varepsilon}-1-\rho}\Big)
\Big(\frac{z\overline{b}e^{i\varepsilon}-1}{z\overline{b}e^{i\varepsilon}-1-\rho}\Big),\qquad z\in\D.$$
\end{lem}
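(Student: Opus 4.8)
The plan is to write $F_\varepsilon:=\phi_{\rho,0}^2\phi_{\rho,\varepsilon}f_{\gamma\setminus\gamma_\varepsilon}-\phi_{\rho,0}^3$, to set $\E:=\{a,b\}$ and $\E_\varepsilon:=\{a,ae^{i\varepsilon},be^{-i\varepsilon},b\}$, and to prove separately that $\|F_\varepsilon\|_\infty\to0$ and that the $\omega$-seminorm of $F_\varepsilon$ tends to $0$ as $\varepsilon\to0$. I would \emph{not} try to invoke Lemma \ref{hilbert} verbatim, since $f_{\gamma\setminus\gamma_\varepsilon}$ is not analytic along the two shrinking arcs making up $\gamma\setminus\gamma_\varepsilon$ and no fixed $\Delta_n\in\Omega_{\E}$ can avoid them (the set $\E$ has only two complementary arcs). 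Instead I would reproduce the concluding step of the proof of Lemma \ref{hilbert}, which uses only the two limits \eqref{chams00} and \eqref{chams11} and not the arc hypotheses $i_1$--$i_3$. Note that the three factors cannot be separated: only the full product $\phi_{\rho,0}^2\phi_{\rho,\varepsilon}f_{\gamma\setminus\gamma_\varepsilon}$ lies in $\cL$, because the jumps of $f_{\gamma\setminus\gamma_\varepsilon}$ at $a,b$ are killed by $\phi_{\rho,0}^2$ and those at $ae^{i\varepsilon},be^{-i\varepsilon}$ by $\phi_{\rho,\varepsilon}$.

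First I would record membership and a uniform bound. The function $g_\varepsilon:=\phi_{\rho,0}^2\phi_{\rho,\varepsilon}$ is a rational outer function whose boundary zero set is exactly $\E_\varepsilon$, and the family $\{g_\varepsilon\}$ is uniformly bounded in $\cL$ (it is uniformly bounded in $C^1(\overline{\D})$, the poles staying at distance $\rho$ from $\overline{\D}$, and $C^1(\overline{\D})\hookrightarrow\cL$ because $t\mapsto t/\omega(t)$ is bounded). Since $\gamma\setminus\gamma_\varepsilon\in\Omega_{\E_\varepsilon}$ with $\E_\varepsilon\subseteq\E_{g_\varepsilon}$, Lemma \ref{fliyo} applies and yields $g_\varepsilon f_{\gamma\setminus\gamma_\varepsilon}\in\cJ_{\omega}(\E_\varepsilon)\subseteq\cJ_{\omega}(\E)$ together with $\|g_\varepsilon f_{\gamma\setminus\gamma_\varepsilon}\|_\omega\le c_{\omega,f}\|g_\varepsilon\|_\omega$; the constant $c_{\omega,f}$ in \eqref{fig11} depends only on $\omega$ and $f$, so this bound is independent of $\varepsilon$. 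As $\phi_{\rho,0}^3\in\cJ_{\omega}(\E)$ as well, we get $F_\varepsilon\in\cJ_{\omega}(\E)$ with $\sup_\varepsilon\|F_\varepsilon\|_\omega<+\infty$. For the sup norm I would split $\T$ into $\E(\delta)$ and its complement: for fixed $\delta$ and small $\varepsilon$ the set $\gamma\setminus\gamma_\varepsilon$ and the points $ae^{i\varepsilon},be^{-i\varepsilon}$ lie inside $\E(\delta)$, so on $\overline{\T\setminus\E(\delta)}$ the integral defining $f_{\gamma\setminus\gamma_\varepsilon}$ is taken over a set shrinking to $\{a,b\}$ (where $\log|f|$ is integrable), whence $f_{\gamma\setminus\gamma_\varepsilon}\to1$ and $\phi_{\rho,\varepsilon}\to\phi_{\rho,0}$ uniformly and $F_\varepsilon\to0$ there, while on $\E(\delta)$ one has $|F_\varepsilon|\lesssim|\phi_{\rho,0}^2|\lesssim\delta^2$; letting $\delta\to0$ gives $\|F_\varepsilon\|_\infty\to0$.

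For the seminorm I would verify the two regimes. Far from $\E$: on the compact set $\overline{\T\setminus\E(\delta)}$ the argument above even gives $F_\varepsilon\to0$ in $C^1$, so for nearby pairs $|F_\varepsilon(\xi)-F_\varepsilon(\zeta)|/\omega(|\xi-\zeta|)\le\big(|\xi-\zeta|/\omega(|\xi-\zeta|)\big)\sup|F_\varepsilon'|\to0$, and for distant pairs one uses $\|F_\varepsilon\|_\infty\to0$ with $\omega(|\xi-\zeta|)$ bounded below; this is the analogue of \eqref{chams00}. Near $\E$ I must establish the analogue of \eqref{chams11}, namely $\lim_{\delta\to0}\sup_{\varepsilon}\sup_{\xi,\zeta\in\E(\delta),\,\xi\neq\zeta}|F_\varepsilon(\xi)-F_\varepsilon(\zeta)|/\omega(|\xi-\zeta|)=0$. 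The $\phi_{\rho,0}^3$ contribution is controlled since $\phi_{\rho,0}^3\in\cJ_{\omega}(\E)$; for $g_\varepsilon f_{\gamma\setminus\gamma_\varepsilon}$ I would rerun the case analysis of the proof of Lemma \ref{fliyo} relative to the set $\E_\varepsilon$, which, using condition \eqref{cond} to absorb the $d^{-2}(\cdot,\E_\varepsilon)$ growth of $f_{\gamma\setminus\gamma_\varepsilon}'$, bounds the difference quotient by a fixed multiple of $|g_\varepsilon(\xi)|/\omega(d(\xi,\E_\varepsilon))$.

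The whole point of the squared factor $\phi_{\rho,0}^2$ now enters: near $a,b$ one has $|g_\varepsilon(\xi)|\lesssim d(\xi,\E)^2\,d(\xi,\E_\varepsilon)$. For $\xi\in\E(\delta)$, either $d(\xi,\E_\varepsilon)=d(\xi,\E)=:d\le\delta$, so $d(\xi,\E_\varepsilon)\le d+|\varepsilon|$ and the ratio is $\lesssim d^3/\omega(d)+|\varepsilon|\,d^2/\omega(d)$, or $d(\xi,\E_\varepsilon)$ is attained at a moving point, so the ratio is $\lesssim\delta^2\,d(\xi,\E_\varepsilon)/\omega(d(\xi,\E_\varepsilon))\lesssim\delta^2$. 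Since $t^3/\omega(t)$ and $t^2/\omega(t)$ are nondecreasing and $t/\omega(t)$ is bounded, both bounds tend to $0$ as $\delta\to0$ uniformly for $|\varepsilon|\le1$, giving the required uniform little-oh near $\E$. With both regimes in hand, the triangle-inequality argument ending the proof of Lemma \ref{hilbert} (splitting an arbitrary pair through a point of the boundary of $\E(\delta)$ and using that $\omega$ is nondecreasing) yields $\lim_{\varepsilon\to0}\|F_\varepsilon\|_\omega=0$, as desired. The main obstacle is precisely this uniform-in-$\varepsilon$ estimate near the endpoints: the zeros $ae^{i\varepsilon},be^{-i\varepsilon}$ of $\phi_{\rho,\varepsilon}$ and the jump points of $f_{\gamma\setminus\gamma_\varepsilon}$ slide into $a,b$, and it is the second-order vanishing of $\phi_{\rho,0}^2$ combined with \eqref{cond} that keeps the bound uniform.
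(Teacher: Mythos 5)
Your skeleton is the paper's own: a uniform-in-$\varepsilon$ little-oh estimate on $\E(\delta)$, $C^{1}$-convergence to $0$ on $\overline{\T\setminus\E(\delta)}$, and the gluing argument that ends the proof of Lemma \ref{hilbert}; your far-region and sup-norm arguments are correct. The gap is in the near-$\E$ regime. Writing $g_\varepsilon:=\phi_{\rho,0}^{2}\phi_{\rho,\varepsilon}$ and $\E:=\{a,b\}$, the quantity to be controlled splits by the product rule \eqref{9amar2} into
\begin{equation*}
g_\varepsilon(\xi)\,\frac{f_{\gamma\setminus\gamma_\varepsilon}(\xi)-f_{\gamma\setminus\gamma_\varepsilon}(\zeta)}{\omega(|\xi-\zeta|)}
\qquad\text{and}\qquad
f_{\gamma\setminus\gamma_\varepsilon}(\zeta)\,\frac{g_\varepsilon(\xi)-g_\varepsilon(\zeta)}{\omega(|\xi-\zeta|)}.
\end{equation*}
Your case analysis (Lemma \ref{fliyo} rerun relative to $\E_\varepsilon$, condition \eqref{cond}, and the cubic vanishing $|g_\varepsilon(\xi)|\lesssim d(\xi,\E)^{2}d(\xi,\E_\varepsilon)$) controls only the first term; the claim that it bounds \emph{the} difference quotient by a fixed multiple of $|g_\varepsilon(\xi)|/\omega\big(d(\xi,\E_\varepsilon)\big)$ cannot hold for the second: take $\xi=ae^{i\varepsilon}$, so that $g_\varepsilon(\xi)=0$ and the proposed bound is $0$, while the second term equals $|f_{\gamma\setminus\gamma_\varepsilon}(\zeta)|\,|g_\varepsilon(\zeta)|/\omega(|\xi-\zeta|)$, which is not $0$ in general. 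What the second term needs is that the family $\{g_\varepsilon\}$ lie in $\cJ_{\omega}(\E)$ \emph{uniformly} in $\varepsilon$; this is true but nowhere established in your text. A related misstep: you cannot deduce $g_\varepsilon f_{\gamma\setminus\gamma_\varepsilon}\in\cJ_{\omega}(\E_\varepsilon)$ from Lemma \ref{fliyo}, because the conclusion \eqref{fig1} carries the hypothesis $g_\varepsilon\in\cJ_{\omega}(\E_\varepsilon)$, which fails in general: $g_\varepsilon$ has only simple zeros at the moving points $ae^{i\varepsilon}$, $be^{-i\varepsilon}$, and (for instance when $\omega=\omega_1$) a function with a simple boundary zero at a point of $\E_\varepsilon$ is never in $\cJ_{\omega_1}(\E_\varepsilon)$. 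Only the bound \eqref{fig11} of Lemma \ref{fliyo} applies the way you use it; fortunately the $\cJ_{\omega}(\E_\varepsilon)$ claim is dispensable, since only the two limits feed the gluing step.

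Both defects are cured at once by the device the paper actually uses, which is worth comparing with your plan: do not analyze $f_{\gamma\setminus\gamma_\varepsilon}$ near $\E$ at all, but peel off a single factor $\phi_{\rho,0}$ (which vanishes on $\E$) and keep the rest of the product in one block. By \eqref{fig11} (applied exactly as in your first step) together with \eqref{figure00}, the norms $\|\phi_{\rho,0}\phi_{\rho,\varepsilon}f_{\gamma\setminus\gamma_\varepsilon}\|_{\omega}$ admit the bound \eqref{figure1}, independent of $\varepsilon$; then one application of the product rule to $\phi_{\rho,0}\cdot\big(\phi_{\rho,0}\phi_{\rho,\varepsilon}f_{\gamma\setminus\gamma_\varepsilon}\big)$ gives
\begin{equation*}
\frac{\big|\phi^{2}_{\rho,0}\phi_{\rho,\varepsilon}f_{\gamma\setminus\gamma_\varepsilon}(\xi)-\phi^{2}_{\rho,0}\phi_{\rho,\varepsilon}f_{\gamma\setminus\gamma_\varepsilon}(\zeta)\big|}{\omega(|\xi-\zeta|)}
\leq c_{\omega,f,\rho}\big(|\phi_{\rho,0}(\xi)|+|\phi_{\rho,0}(\zeta)|\big),
\end{equation*}
with $c_{\omega,f,\rho}$ independent of $\varepsilon$; this single estimate covers both terms above, vanishes uniformly on $\E(\delta)$ as $\delta\to 0$, and makes the $d(\xi,\E)^{2}d(\xi,\E_\varepsilon)$ bookkeeping unnecessary. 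Your missing cross term can be repaired by the same trick in miniature: writing $g_\varepsilon=\phi_{\rho,0}\cdot\big(\phi_{\rho,0}\phi_{\rho,\varepsilon}\big)$, with both factors of $\omega$-norm bounded independently of $\varepsilon$ and the first vanishing on $\E$, one gets $|g_\varepsilon(\xi)-g_\varepsilon(\zeta)|/\omega(|\xi-\zeta|)\lesssim|\phi_{\rho,0}(\xi)|+|\phi_{\rho,0}(\zeta)|\lesssim\delta$ for $\xi,\zeta\in\E(\delta)$. With these supplements your argument closes and coincides in substance with the paper's proof.
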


\begin{proof}
Using \eqref{fig11},
\begin{equation}\label{figure0}
\|\phi^{}_{{\rho,0}} \phi^{}_{{\rho,\varepsilon}}f_{\gamma\setminus\gamma_{\epsilon}}\|_\omega\leq c_{\omega,f}
\|\phi^{}_{{\rho,0}}\|_\omega \|\phi^{}_{{\rho,\varepsilon}}\|_\omega,
\end{equation}
where $c_{\omega,f}>0$ is a constant not depending on $\varepsilon.$ Clearly
\begin{equation}\label{figure00}
\|\phi^{}_{{\rho,\varepsilon}}\|_\omega\leq \Big\|\frac{z\overline{a}e^{-i\varepsilon}-1}
{z\overline{a}e^{-i\varepsilon}-1-\rho} \Big\|_\omega\Big\|\frac{z\overline{b}e^{i\varepsilon}-1}{z\overline{b}e^{i\varepsilon}-1-\rho}\Big\|_\omega= \Big\|\frac{z-1}{z-1-\rho}\Big\|_\omega^2.
\end{equation}
Thus
\begin{equation}\label{figure1}
\|\phi^{}_{{\rho,0}} \phi^{}_{{\rho,\varepsilon}}f_{\gamma\setminus\gamma_{\epsilon}}\|_\omega\leq c_{\omega,f}
\Big\|\frac{z-1}{z-1-\rho}\Big\|_\omega^{4} .
\end{equation}
We set $\F:=\{a,b\}.$
We have $\phi^{2}_{{\rho,0}}\phi^{}_{{\rho,\varepsilon}}f_{\gamma\setminus\gamma_{\epsilon}}\in\cJ_\omega(\F),$ as product of two functions
 $\phi^{}_{{\rho,0}}$ and $\phi^{}_{{\rho,0}}\phi^{}_{{\rho,\varepsilon}}f_{\gamma\setminus\gamma_{\epsilon}}$ from $\cI_\omega(\F).$
Taking account of  \eqref{figure00}, \eqref{figure1} and the fact that $\| \phi^{}_{{\rho,\varepsilon}}\|_{\infty}\leq 1,$ we calculate, for all distinct points $\xi,\zeta\in\T,$ that
\begin{eqnarray}\nonumber
&&\frac{|\phi^{2}_{{\rho,0}}\phi^{}_{{\rho,\varepsilon}}f_{\gamma\setminus\gamma_{\epsilon}}(\xi)-
\phi^{2}_{{\rho,0}}\phi^{}_{{\rho,\varepsilon}}f_{\gamma\setminus\gamma_{\epsilon}}(\zeta)|}{\omega(|\xi-\zeta|)}
\\\nonumber&\leq&|\phi_{{\rho,0}}\phi^{}_{{\rho,\varepsilon}}f_{\gamma\setminus\gamma_{\epsilon}}(\xi)|
\frac{|\phi_{{\rho,0}}(\xi)-\phi_{{\rho,0}}(\zeta)|}{\omega(|\xi-\zeta|)}
+|\phi_{{\rho,0}}(\zeta)|\frac{|\phi_{{\rho,0}}\phi^{}_{{\rho,\varepsilon}}f_{\gamma\setminus\gamma_{\epsilon}}(\xi)-
\phi_{{\rho,0}}\phi^{}_{{\rho,\varepsilon}}f_{\gamma\setminus\gamma_{\epsilon}}(\zeta)|}{\omega(|\xi-\zeta|)}
\\\nonumber&\leq& |\phi_{{\rho,0}}(\xi)|(\|f\|_{\infty}+1)\|\phi_{{\rho,0}}\|_\omega
+|\phi_{{\rho,0}}(\zeta)|
\|\phi^{}_{{\rho,0}} \phi^{}_{{\rho,\varepsilon}}f_{\gamma\setminus\gamma_{\epsilon}}\|_\omega
\\\label{figure2}&\leq& c_{\omega,f,\rho}\big(|\phi_{{\rho,0}}(\xi)|
+|\phi_{{\rho,0}}(\zeta)|\big),
\end{eqnarray}
where $c_{\omega,f,\rho}>0$ is a constant not depending on $\varepsilon.$
Then
\begin{equation}\label{figure2}
\lim_{\delta\rightarrow0}\sup_{\xi,\zeta\in\F(\delta)\atop \xi\neq\zeta}\frac{|\phi^{2}_{{\rho,0}}\phi^{}_{{\rho,\varepsilon}}f_{\gamma\setminus\gamma_{\epsilon}}(\xi)-
\phi^{2}_{{\rho,0}}\phi^{}_{{\rho,\varepsilon}}f_{\gamma\setminus\gamma_{\epsilon}}(\zeta)|}{\omega(|\xi-\zeta|)}=0,
\end{equation}
and since $\phi^{3}_{{\rho,0}} \in\cJ_\omega(\F),$ we get
\begin{equation}\label{fig3}
\lim_{\delta\rightarrow0}\sup_{\xi,\zeta\in\F(\delta)\atop \xi\neq\zeta}\frac{|k_\varepsilon(\xi)-k_\varepsilon(\zeta)|}{\omega(|\xi-\zeta|)}=0,
\end{equation}
uniformly with respect to $\varepsilon,$ where  $k_\varepsilon:=\phi^{2}_{{\rho,0}}\phi^{}_{{\rho,\varepsilon}}
f_{{\gamma\setminus\gamma_{\varepsilon}}}-\phi^{3}_{{\rho,0}}.$

We will now show that
\begin{equation}\label{fig6}
\lim_{\varepsilon\rightarrow0}\sup_{\xi,\zeta\in\T\setminus\F(\delta)\atop\xi\neq\zeta}\frac{|k_\varepsilon(\xi)-k_\varepsilon(\zeta)|}{\omega(|\xi-\zeta|)}=0,
\end{equation}
for each number $\delta\in]0,1].$ We first remark that the function $k_\varepsilon$ has an analytic extension across  $\T\setminus\F(\delta),$ for  sufficiently small numbers $\varepsilon.$
A simple calculation gives
\begin{equation}\label{fig7}
\lim_{\varepsilon\rightarrow0}\sup_{z\in\T\setminus\F(\delta)}|k_\varepsilon(z)|=\lim_{\varepsilon\rightarrow0}\sup_{z\in\T\setminus\F(\delta)}|k^{'}_\varepsilon(z)|=0.
\end{equation}
For two distinct points $\xi,\zeta\in\T\setminus\F(\delta),$  we clearly have two possible cases:
$$\text{\it Case 1.}\quad |\xi-\zeta|\leq\frac{1}{2}d(\xi,\F)\qquad  \text{ or }\qquad \text{\it Case 2.}\quad  |\xi-\zeta|\geq\frac{1}{2}d(\xi,\F).$$
In the first case, by using the fact that $t\mapsto t/\omega(t)$ is nondecreasing
\begin{equation}\label{fig4}
\frac{|k_\varepsilon(\xi)-k_\varepsilon(\zeta)|}{\omega(|\xi-\zeta|)}\lesssim \sup_{z\in(\xi,\zeta)}|k^{'}_\varepsilon(z)|\frac{|\xi-\zeta|}{\omega(|\xi-\zeta|)}
\lesssim \sup_{z\in\T\setminus\F(\delta)}|k^{'}_\varepsilon(z)|,
\end{equation}
where  $(\xi,\zeta)$ is the open arc joining the points $\xi$ and $\zeta,$ and such that $(\xi,\zeta)\subseteq \T\setminus\F(\delta).$
In the second case, by using the fact that $\omega$ is nondecreasing and \eqref{omega}, we get
\begin{equation}\label{fig5}
\frac{|k_\varepsilon(\xi)-k_\varepsilon(\zeta)|}{\omega(|\xi-\zeta|)}
\leq \displaystyle 2\frac{\sup\limits_{z\in\T\setminus\F(\delta)}|k_\varepsilon(z)| }{\omega(\frac{1}{2}d(\xi,\F))} \leq\frac{4}{\omega(\delta)}\sup_{z\in\T\setminus\F(\delta)}|k_\varepsilon(z)|.
\end{equation}
Hence the desired equality \eqref{fig6} is deduced from \eqref{fig7}, \eqref{fig4} and \eqref{fig5}.
As we have done in the proof of Lemma \ref{hilbert} (see formulas from \eqref{chams00} to \eqref{chams5} above), we derive from the estimates \eqref{fig3} and \eqref{fig6} that
$\displaystyle \lim_{\varepsilon\rightarrow0}\|k_\varepsilon\|_\omega=0,$ which is the desired result of our lemma.
\end{proof}

The following lemma will be used in the proof of both Proposition \ref{lem22} and Proposition \ref{lem2}.

\begin{lem}\label{fal}
Let $\cI\subseteq\cL$ be  a closed ideal and let $g\in\cJ_{\omega}(\E_{_\cI})$ be a function, where $\omega$ is a modulus of continuity satisfying \eqref{cond}.
Let $\Gamma\in\Omega_{_{\E_{_\cI}}}$  be such that $\T\setminus\{\Gamma\cup\E_{_\cI}\}$ is a finite union of arcs $(a,b)\subseteq\T\setminus \E_{_\cI}$ $(a,b\in \E_{_\cI})$. We suppose that
 $gf\in\cI$ for some outer function $f\in\cL.$ Then  $gf_{\Gamma}\in\cI.$
\end{lem}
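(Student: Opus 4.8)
The plan is to peel off the finitely many complementary arcs one at a time, dividing out a single Korenblum factor at each stage, and to reduce the statement to the one-arc situation already prepared by Lemma \ref{hamam}. Write $\T\setminus\{\Gamma\cup\E_{_\cI}\}=\gamma_1\cup\cdots\cup\gamma_N$ with $\gamma_j=(a_j,b_j)$, $a_j,b_j\in\E_{_\cI}$, and set $\Gamma^{(k)}:=(\T\setminus\E_{_\cI})\setminus(\gamma_1\cup\cdots\cup\gamma_k)$, so that $\Gamma^{(0)}=\T\setminus\E_{_\cI}$ and $\Gamma^{(N)}=\Gamma$. Since $\E_{_\cI}$ has Lebesgue measure zero, the defining integral \eqref{korenblum} gives $f_{\Gamma^{(0)}}=f$ and $f_{\Gamma^{(k-1)}}=f_{\Gamma^{(k)}}f_{\gamma_k}$ for each $k$. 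I would prove by induction on $k$ that $gf_{\Gamma^{(k)}}\in\cI$: the base case $k=0$ is the hypothesis $gf\in\cI$, and the case $k=N$ is the assertion. Throughout, $gf_{\Gamma^{(k)}}\in\cJ_\omega(\E_{_\cI})$ by \eqref{fig1} of Lemma \ref{fliyo}, since $\E_{_\cI}\subseteq\E_g$ and each $f_{\Gamma^{(k)}}$ is an outer function of $\cL$.

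For the inductive step it suffices to treat one arc: assuming $g\tilde f\in\cI$ for the outer function $\tilde f:=f_{\Gamma^{(k-1)}}$ and $\gamma:=\gamma_k$, I must show $g\tilde f f_\gamma^{-1}=gf_{\Gamma^{(k)}}\in\cI$ (note that $|\tilde f|=|f|$ on $\gamma\subseteq\Gamma^{(k-1)}$, so $f_\gamma$ is precisely the Korenblum factor of $\tilde f$ over $\gamma$). I fix $\rho>0$ and, for small $\varepsilon$, the arc $\gamma_\varepsilon=(ae^{i\varepsilon},be^{-i\varepsilon})\subset\gamma$, so that $f_\gamma=f_{\gamma_\varepsilon}f_{\gamma\setminus\gamma_\varepsilon}$. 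The key algebraic identity is
\begin{equation*}
g\tilde f\cdot\big(\phi^2_{\rho,0}\phi_{\rho,\varepsilon}f_{\gamma_\varepsilon}^{-1}\big)=gf_{\Gamma^{(k)}}\cdot\big(\phi^2_{\rho,0}\phi_{\rho,\varepsilon}f_{\gamma\setminus\gamma_\varepsilon}\big),
\end{equation*}
whose left-hand side lies in $\cI$: indeed $g\tilde f\in\cI$, and the multiplier $\phi^2_{\rho,0}\phi_{\rho,\varepsilon}f_{\gamma_\varepsilon}^{-1}$ belongs to $\cL$ because the endpoints $ae^{i\varepsilon},be^{-i\varepsilon}$ lie in the interior of $\gamma$, away from $\E_{\tilde f}$, so that $f_{\gamma_\varepsilon}^{-1}$ is bounded there with only a bounded phase singularity that is absorbed by the simple zeros of $\phi_{\rho,\varepsilon}$ at exactly those two points.

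Next I would let $\varepsilon\to0$. Multiplying the conclusion of Lemma \ref{hamam}, namely $\phi^2_{\rho,0}\phi_{\rho,\varepsilon}f_{\gamma\setminus\gamma_\varepsilon}\to\phi^3_{\rho,0}$ in $\cL$, by the fixed element $gf_{\Gamma^{(k)}}\in\cL$ (Lemma \ref{fliyo}) and using that $\cL$ is a Banach algebra, the right-hand side of the identity converges in $\|\cdot\|_\omega$ to $gf_{\Gamma^{(k)}}\phi^3_{\rho,0}$. Since $\cI$ is closed, $gf_{\Gamma^{(k)}}\phi^3_{\rho,0}\in\cI$ for every $\rho>0$. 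Finally, $\phi^3_{\rho,0}$ is precisely the product $\varphi_{\rho,5}$ associated with the six points $a,a,a,b,b,b\in\E_{_\cI}$, so Lemma \ref{lem3} applied to $gf_{\Gamma^{(k)}}\in\cJ_\omega(\E_{_\cI})$ yields $\|gf_{\Gamma^{(k)}}\phi^3_{\rho,0}-gf_{\Gamma^{(k)}}\|_\omega\to0$ as $\rho\to0^+$; closedness of $\cI$ then gives $gf_{\Gamma^{(k)}}\in\cI$, completing the induction and hence the proof.

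The main obstacle is the verification that the multiplier $\phi^2_{\rho,0}\phi_{\rho,\varepsilon}f_{\gamma_\varepsilon}^{-1}$ genuinely belongs to $\cL$, that is, controlling the inverse Korenblum function near the arc endpoints; this is exactly the delicate point that the $\varepsilon$-truncation and the vanishing of the $\phi$-factors in Lemma \ref{hamam} are engineered to handle, and it forces the two-scale bookkeeping between the zeros of $\phi_{\rho,\varepsilon}$ at $ae^{i\varepsilon},be^{-i\varepsilon}$ and those of $\phi_{\rho,0}$ at $a,b$. A secondary but genuine point to check is that at each inductive stage the endpoints of $\gamma_k$ lie in the zero set $\E_{\tilde f}$ of the current outer factor $\tilde f=f_{\Gamma^{(k-1)}}$ and that $\gamma_k$ avoids $\E_{\tilde f}$ in its interior, so that Lemma \ref{hamam} is legitimately applicable; otherwise one must subdivide the arc according to the interior zeros of $f$ before applying the one-arc argument.
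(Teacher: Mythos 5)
Your overall skeleton --- truncating the arc to $\gamma_\varepsilon$, using Lemma \ref{hamam} to pass to the limit $\varepsilon\rightarrow0$, then Lemma \ref{lem3} (with $\phi^3_{\rho,0}=\varphi_{\rho,5}$ anchored at $a,a,a,b,b,b$) to let $\rho\rightarrow0^+$, invoking closedness of $\cI$ twice --- is exactly the paper's. The genuine gap is in how you place the intermediate function $gf_{\Gamma^{(k)}}\phi^{2}_{\rho,0}\phi_{\rho,\varepsilon}f_{\gamma\setminus\gamma_\varepsilon}$ into $\cI$: you multiply the hypothesis $g\tilde f\in\cI$ by $\phi^{2}_{\rho,0}\phi_{\rho,\varepsilon}f_{\gamma_\varepsilon}^{-1}$ and claim this multiplier lies in $\cL$. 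That claim is false in general. Nothing in Lemma \ref{fal} prevents the boundary zero set $\E_f$ of the outer function $f$ from meeting $\gamma_\varepsilon$ (only $g$ must vanish on $\E_{_\cI}$, and $\gamma\subseteq\T\setminus\E_{_\cI}$); if $\E_f\cap\gamma_\varepsilon\neq\emptyset$, then $f_{\gamma_\varepsilon}^{-1}$ has boundary modulus $1/|f|$ on $\gamma_\varepsilon$, which is unbounded there, and the zeros of $\phi_{\rho,\varepsilon}$, sitting only at the endpoints $ae^{i\varepsilon},be^{-i\varepsilon}$, cannot absorb an interior blow-up, so the multiplier is not even in $\cH^{\infty}$. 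Even in the favourable case $\E_f\cap\overline{\gamma_\varepsilon}=\emptyset$, the membership $\phi^{2}_{\rho,0}\phi_{\rho,\varepsilon}f_{\gamma_\varepsilon}^{-1}\in\cL$ (control of the phase singularity of an inverted Korenblum factor at the endpoints) is an assertion of the same depth as Lemma \ref{fliyo}, which you leave unproven. Your fallback --- subdividing $\gamma$ at the interior zeros of $f$ --- does not repair this: $\E_f\cap\gamma$ may be infinite (even uncountable), and the new subdivision points would lie in $\E_f\setminus\E_{_\cI}$, where $g$ has no $\cJ_\omega$-smallness, so Lemma \ref{lem3}, whose anchor points must lie in the set $\E_{_\cI}$ on which $g\in\cJ_\omega(\E_{_\cI})$, could no longer remove the auxiliary factors at the end.

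The paper avoids inverting $f_{\gamma_\varepsilon}$ altogether: it works in the quotient algebra, writing $0=\pi\big(\phi^{2}_{\rho,\varepsilon}gf\big)=\pi\big(\phi_{\rho,\varepsilon}gf_{\T\setminus\gamma_\varepsilon}\big)\pi\big(\phi_{\rho,\varepsilon}f_{\gamma_\varepsilon}\big)$ for the quotient map $\pi:\cL\to\cL/\cI$, where both factors lie in $\cL$ by Lemma \ref{fliyo}, and then uses that $\pi\big(\phi_{\rho,\varepsilon}f_{\gamma_\varepsilon}\big)$ is invertible in $\cL/\cI$ because its zero set is contained in $\overline{\gamma_\varepsilon}\cup\{ae^{i\varepsilon},be^{-i\varepsilon}\}$, disjoint from the hull of $\cI$. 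Invertibility in the quotient needs only this spectral information, no smoothness of $1/f_{\gamma_\varepsilon}$; this is precisely the device your argument is missing, and substituting it for your multiplier step essentially turns your proof into the paper's. A secondary flaw: your induction treats each $f_{\Gamma^{(k)}}$ as ``an outer function of $\cL$'', but Korenblum factors of $\cL$-outer functions need not belong to $\cL$ (which is exactly why Lemma \ref{fliyo} only asserts membership of the products $gf_\Gamma$). This is repairable by applying Lemmas \ref{fliyo} and \ref{hamam} to the original $f$ throughout, since $f_{\gamma_k}$ depends only on $|f|$ restricted to $\gamma_k$; note also that the induction is dispensable, as the paper's one-arc argument extends verbatim to finitely many arcs by taking the $\phi$-products over all endpoints at once.
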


\begin{proof} For simplicity we suppose that $\T\setminus\{\Gamma\cup\E_{_\cI}\}=(a,b)=:\gamma,$ where
$a,b\in \E_{_\cI}$ and $(a,b)\subseteq\T\setminus \E_{_\cI}.$
Let $\varepsilon$ be a nonzero real number such that $\gamma_{\varepsilon}:=(a e^{i\varepsilon},b e^{-i\varepsilon})\subset\gamma.$
According to Lemma \ref{fliyo}, the functions
$\phi_{{\rho,\varepsilon}}f_{\gamma_{\varepsilon}}$
and $\phi_{{\rho,\varepsilon}}
f_{{\T\setminus\gamma_{\varepsilon}}}$ belong to $\cL,$ since $\phi_{{\rho,\varepsilon}}\in\cI_{\omega}(\{a e^{i\varepsilon}, b e^{-i\varepsilon}\}).$
Now, for $\pi:\cL\to \cL/\cI$
being the canonical quotient map, we clearly have
\begin{eqnarray*} 0=\pi\big(\phi^{2}_{{\rho,\varepsilon}}gf\big)
=\pi\big(\phi_{{\rho,\varepsilon}}gf_{{\T\setminus\gamma_{\varepsilon}}}\big)\pi
\big(\phi_{{\rho,\varepsilon}}f_{{\gamma_{\varepsilon}}}\big).
\end{eqnarray*}
Then $\pi(\phi_{{\rho,\varepsilon}}gf_{{\T\setminus\gamma_{\varepsilon}}})=0,$
since the function
$\pi\big(\phi_{{\rho,\varepsilon}}f_{{\gamma_{\varepsilon}}}\big)$ is
invertible. It follows $\phi_{{\rho,\varepsilon}}gf_{{\T\setminus\gamma_{\varepsilon}}}\in\cI.$
Since $g\in\cJ_{\omega}(\E_{_\cI})$ then $gf_{\Gamma}\in\cJ_{\omega}(\E_{_\cI}),$ by applying again Lemma \ref{fliyo}.
Using Lemma \ref{hamam} and the following inequality
\begin{equation*}
\|\phi^{2}_{{\rho,0}}\phi^{}_{{\rho,\varepsilon}}
gf_{{\T\setminus\gamma_{\varepsilon}}}-\phi^{3}_{{\rho,0}}gf_{{\Gamma}}\|_{\omega}\leq \|gf_{\Gamma}\|_\omega\times
\|\phi^{2}_{{\rho,0}}\phi^{}_{{\rho,\varepsilon}}
f_{{\gamma\setminus\gamma_{\varepsilon}}}-\phi^{3}_{{\rho,0}}\|_{\omega},
\end{equation*}
we obtain
\begin{equation}\label{hamam2}
\lim\limits_{\varepsilon\rightarrow0}\|\phi^{2}_{{\rho,0}}\phi^{}_{{\rho,\varepsilon}}
gf_{{\T\setminus\gamma_{\varepsilon}}}-\phi^{3}_{{\rho,0}}gf_{{\Gamma}}\|_{\omega}=0.
\end{equation}
We deduce that $\phi^{3}_{{\rho,0}}gf_{{\Gamma}}\in\cI,$ for all $\rho>0.$
On the other hand
$$\lim\limits_{\rho\rightarrow0}\|\phi^{3}_{{\rho,0}}gf_{{\Gamma}}-gf_{{\Gamma}}\|_{\omega}=0,$$
by applying Lemma \ref{lem3}. Therefore  $gf_{{\Gamma}}\in\cI.$
\end{proof}

\subsection{Proof of Proposition \ref{lem22}\label{sect42}}

Let $\cI\subseteq\cL$ be a closed ideal and let $g\in\cJ_\omega(\E_{_{\cI}})$ be
a function. We suppose that $gf\in\cI,$  for some outer function $f\in\cL.$
We set $\T\setminus\E_{_{\cI}}=\bigcup\limits_{n\in\N}(a_n,b_n),$ where $(a_n,b_n)\subseteq\T\setminus\E_{_{\cI}}$ and $a_n,b_n\in\E_{_{\cI}}.$
We have $\lim\limits_{n\rightarrow +\infty}\|g f_{{\Gamma_{n}}}-g\|_{\omega}=0,$ by applying Lemma \ref{fliyo} with $\Gamma_{n}=\bigcup\limits_{m> n}(a_m,b_m).$  Then $g\in\cI,$ since, by applying Lemma \ref{fal}, $gf_{\Gamma_n}\in\cI$ for all $n\in\N.$  $\hfill\qed$

\section{\bf Proof of Proposition \ref{lem2}. \label{sect3}}

To prove the proposition we need first to establish some lemmas.

\begin{lem}\label{multi}
Let $f\in\cL$ be a function, where $\omega$ is a modulus of continuity satisfying \eqref{cond}.  Let $\E$ be a closed subset of $\E_{f}$ and let $S\in\cH^{\infty}$ be a singular inner function such that $\sigma(S)\subseteq \E$ and $\int_{\T}d\mu_{_S}(e^{i\theta})\leq M,$ where $M>0$ is a constant. Then
 $Sf\in\cL$ and
 \begin{equation}\label{singular}
 \|Sf\|_{\omega}\leq c_{\omega,M} \|f\|_{\omega},
 \end{equation}
  where $c_{\omega,M}>0$ is a constant not depending on $f.$
  If moreover $f\in\cJ_\omega(\E)$ then $Sf\in\cJ_\omega(\E).$
 \end{lem}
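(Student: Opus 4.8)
We must show that multiplication by a singular inner function $S$ whose spectrum sits inside $\E$ and whose associated measure has bounded mass preserves membership in $\cL$, with a norm bound depending only on $\omega$ and the mass bound $M$ (and not on $f$), and that it preserves membership in $\cJ_\omega(\E)$.

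**Approach.** The plan is to reduce the statement for $Sf$ to a Lipschitz-quotient estimate on the boundary $\T$, exactly as in Lemma \ref{fliyo}, and to exploit the pointwise control of $|S'|$ near $\T\setminus\E$ coming from the structure of a singular inner function. First I would record that since $\sigma(S)\subseteq\E\subseteq\E_f$, the function $S$ extends analytically across $\T\setminus\E$ and is continuous on $\overline{\D}\setminus\E$; hence $Sf$ extends to a function in $\cA(\D)$ vanishing on $\E$, so it suffices to estimate the difference quotient. As in the proof of Lemma \ref{fliyo}, I would split $Sf$ via the identity
\begin{equation*}
\frac{Sf(\xi)-Sf(\zeta)}{\omega(|\xi-\zeta|)}
=S(\xi)\,\frac{f(\xi)-f(\zeta)}{\omega(|\xi-\zeta|)}
+f(\zeta)\,\frac{S(\xi)-S(\zeta)}{\omega(|\xi-\zeta|)}.
\end{equation*}
Because $\|S\|_\infty\le 1$, the first term is dominated by $\|f\|_\omega$. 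For the second term, the whole difficulty is to bound $|f(\zeta)|\,|S(\xi)-S(\zeta)|/\omega(|\xi-\zeta|)$ uniformly, and here I would mimic the case split of Lemma \ref{fliyo}: for $\xi,\zeta\in\T\setminus\E$ with $d(\xi,\E)\le d(\zeta,\E)$, distinguish $|\xi-\zeta|\ge\tfrac14 d^2(\xi,\E)$ from $|\xi-\zeta|\le\tfrac14 d^2(\xi,\E)$.

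**The key estimate.** The crux is the derivative bound for a singular inner function near the boundary. For $z$ on an arc $(\xi,\zeta)\subseteq\T\setminus\E$, the standard estimate gives
\begin{equation*}
|S'(z)|\lesssim \frac{\|\mu_S\|}{d^2(z,\E)}\lesssim \frac{M}{d^2(\xi,\E)},
\end{equation*}
which is precisely the analogue of the bound $|f'_\Gamma(z)|\le c_f/d^2(\xi,\E)$ used in Lemma \ref{fliyo}. In the regime $|\xi-\zeta|\le\tfrac14 d^2(\xi,\E)$, combining this with the monotonicity of $t\mapsto t/\omega(t)$ and hypothesis \eqref{cond} yields
\begin{equation*}
|f(\zeta)|\frac{|S(\xi)-S(\zeta)|}{\omega(|\xi-\zeta|)}
\lesssim M\,\frac{|f(\zeta)||\xi-\zeta|}{\omega(|\xi-\zeta|)\,d^2(\xi,\E)}
\lesssim M\eta_\omega^{-1}\frac{|f(\zeta)|}{\omega(d(\xi,\E))}\lesssim M\eta_\omega^{-1}\|f\|_\omega,
\end{equation*}
after passing from $d(\xi,\E)$ to $d(\zeta,\E)$ via $d(\xi,\E)\le d(\zeta,\E)$ and the fact that $f$ vanishes on $\E$. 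In the complementary regime $|\xi-\zeta|\ge\tfrac14 d^2(\xi,\E)$, one uses $|S(\xi)-S(\zeta)|\le 2$ together with \eqref{cond} to replace $\omega(|\xi-\zeta|)$ by $\omega(d(\xi,\E))$ up to the factor $\eta_\omega^{-1}$, again landing on $|f(\zeta)|/\omega(d(\zeta,\E))\le\|f\|_\omega$. Summing the two terms gives \eqref{singular} with $c_{\omega,M}$ depending only on $\omega$ and $M$.

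**The $\cJ_\omega$ part and the main obstacle.** For the last assertion, assuming $f\in\cJ_\omega(\E)$, I would revisit the same two regimes but now localize to $\xi,\zeta\in\E(\delta)\setminus\E$: in each case the estimates above produce a bound of the form $(\text{const})\cdot |f(\zeta)|/\omega(d(\zeta,\E))$, and since $f\in\cJ_\omega(\E)$ forces $\sup_{\xi\in\E(\delta)}|f(\xi)|/\omega(d(\xi,\E))\to 0$ as $\delta\to 0$, this drives the difference quotient of $Sf$ to zero uniformly near $\E$. Hence $Sf\in\cJ_\omega(\E)$. The step I expect to be the main obstacle is making the derivative bound $|S'(z)|\lesssim M/d^2(z,\E)$ clean and uniform in the measure: one must differentiate $S(z)=\exp\big(-\int_\T\frac{\xi+z}{\xi-z}\,d\mu_S(\xi)\big)$, bound $|S'(z)|\le|S(z)|\int_\T\frac{2}{|\xi-z|^2}\,d\mu_S(\xi)$, and then use that $\operatorname{supp}\mu_S\subseteq\E$ so that $|\xi-z|\gtrsim d(z,\E)$ for $\xi\in\operatorname{supp}\mu_S$, yielding the factor $\|\mu_S\|\le M$. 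Getting the $M$-dependence (rather than an $f$- or $S$-dependence) to come out uniformly is exactly what lets $c_{\omega,M}$ be independent of $f$, and it is the one place where the bounded-mass hypothesis is genuinely used.
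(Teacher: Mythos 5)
Your strategy is in substance the same as the paper's: split the difference quotient of $Sf$ into an $f$-difference term (controlled by $\|f\|_\omega$ since $|S|\leq 1$) plus an $S$-difference term, treat separately the regimes $|\xi-\zeta|\geq\tfrac14 d^2(\xi,\E)$ and $|\xi-\zeta|\leq\tfrac14 d^2(\xi,\E)$, use the derivative bound $|S'(z)|\lesssim \|\mu_S\|/d^2(z,\E)\leq M/d^2(z,\E)$ (valid because $\mathrm{supp}\,\mu_{_S}\subseteq\E$), and use \eqref{cond} to pass from $\omega(d^2(\xi,\E))$ to $\omega(d(\xi,\E))$; the paper does exactly this, writing the derivative bound through the quantity $a_{_S}$. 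Your closing remarks about where the hypothesis $\int_\T d\mu_{_S}\leq M$ enters, and about the $\cJ_\omega(\E)$ part following because all bounds are of the form $\mathrm{const}\cdot|f(\cdot)|/\omega(d(\cdot,\E))$, also match the paper.

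There is, however, a slip in how you attach $f$ to the $S$-difference, and in Case A it breaks the argument as written. With the normalization $d(\xi,\E)\leq d(\zeta,\E)$, your identity carries the factor $f(\zeta)$ — the point \emph{farther} from $\E$ — while your case split and your lower bound for $\omega(|\xi-\zeta|)$ are keyed to the \emph{nearer} point $\xi$. So in Case A you arrive at $\eta_\omega^{-1}|f(\zeta)|/\omega(d(\xi,\E))$, and the claimed passage to $|f(\zeta)|/\omega(d(\zeta,\E))\leq\|f\|_\omega$ "via $d(\xi,\E)\leq d(\zeta,\E)$" goes the wrong way: since $\omega(d(\xi,\E))\leq\omega(d(\zeta,\E))$, the quantity you have is the \emph{larger} one, and it is genuinely unbounded (take $d(\xi,\E)=\varepsilon\to 0$ and $\zeta$ at distance $1$ from $\E$ with $|f(\zeta)|\sim\|f\|_\omega\,\omega(1)$; this pair lies in Case A). The paper avoids this by evaluating $f$ at the nearer point, i.e.\ it uses
$$\frac{Sf(\xi)-Sf(\zeta)}{\omega(|\xi-\zeta|)}=f(\xi)\frac{S(\xi)-S(\zeta)}{\omega(|\xi-\zeta|)}+S(\zeta)\frac{f(\xi)-f(\zeta)}{\omega(|\xi-\zeta|)},$$
so that every case ends with $|f(\xi)|/\omega(d(\xi,\E))\leq\|f\|_\omega$. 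Your write-up is repaired either by switching to this decomposition, or by inserting the triangle inequality $|f(\zeta)|\leq\|f\|_\omega\,\omega(|\xi-\zeta|)+|f(\xi)|$ in Case A; in Case B your bound is salvageable because there $d(\zeta,\E)\leq\tfrac32 d(\xi,\E)$, so $\omega(d(\xi,\E))$ and $\omega(d(\zeta,\E))$ are comparable by \eqref{omega} — but that comparability, not the inequality $d(\xi,\E)\leq d(\zeta,\E)$, is the correct justification. With this one-line fix both \eqref{singular} and the $\cJ_\omega(\E)$ assertion go through as you outline.
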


\begin{proof}
We let $\xi,\zeta\in\T\setminus\E$ be two distinct points such that $d(\xi,\E)\leq d(\zeta,\E).$
 Since
 \begin{eqnarray}\label{chemis0}
 \frac{Sf(\xi)-Sf(\zeta)}{\omega(|\xi-\zeta|)}
= f(\xi)\frac{S(\xi)-S(\zeta)}{\omega(|\xi-\zeta|)}+S(\zeta)\frac{f(\xi)-f(\zeta)}{\omega(|\xi-\zeta|)}
 \end{eqnarray}
 then
  \begin{eqnarray}\label{chemis1}
 \Big|\frac{Sf(\xi)-Sf(\zeta)}{\omega(|\xi-\zeta|)}\Big|
 \leq \Big|f(\xi)\frac{S(\xi)-S(\zeta)}{\omega(|\xi-\zeta|)}\Big|+\|f\|_{\omega}.
  \end{eqnarray}
 We have two situations:

\textit{Case A.}  We suppose that $\displaystyle|\xi-\zeta|\geq \frac{1}{4} d^{2}(\xi,\E).$ Since $\omega$ satisfies \eqref{cond},
  \begin{eqnarray}\nonumber
  \Big|f(\xi)\frac{S(\xi)-S(\zeta)}{\omega(|\xi-\zeta|)}\Big|&\leq& 2\frac{|f(\xi)|}{\omega(\frac{1}{4}d^{2}(\xi,\E))}
   \\\label{chemis20}&\leq&  4\eta_{\omega}^{-1}\frac{|f(\xi)|}{\omega(d(\xi,\E))}
   \\\label{chemis2}&\leq&  4\eta_{\omega}^{-1}\|f\|_{\omega}.
  \end{eqnarray}

\textit{Case B.}  We now suppose that $\displaystyle|\xi-\zeta|\leq \frac{1}{4}d^{2}(\xi,\E).$ In this case we let $(\xi,\zeta)$ be the open arc joining the points $\xi$ and $\zeta,$ and such that $(\xi,\zeta)\subseteq\T\setminus\E.$ Since we have assumed $d(\xi,\E)\leq d(\zeta,\E),$ then
$$d(z,\E)\geq d(\xi,\E),\qquad z\in(\xi,\zeta).$$
We have
  \begin{eqnarray}
\nonumber
\frac{|S(\xi)-S(\zeta)|}{|\xi-\zeta|}\lesssim\sup_{z\in(\xi,\zeta)}|S'(z)|\lesssim\sup_{z\in(\xi,\zeta)}a_{_S}(z),
  \end{eqnarray}
where
$$a_{_S}(z):=\frac{1}{\pi}\int_{\T}\frac{1}{|e^{i\theta}-z|^2}d\mu_{_S}(e^{i\theta}),\qquad z\in\T\setminus\E.$$
Then
  \begin{eqnarray*}
\frac{|S(\xi)-S(\zeta)|}{|\xi-\zeta|}\lesssim  \frac{M}{d^{2}(\xi,\E)},
  \end{eqnarray*}
and consequently
  \begin{eqnarray}\nonumber
  \Big|f(\xi)\frac{S(\xi)-S(\zeta)}{\omega(|\xi-\zeta|)}\Big|&\lesssim& M \frac{|f(\xi)|}{d^{2}(\xi,\E)}\frac{|\xi-\zeta|}{\omega(|\xi-\zeta|)}.
  \end{eqnarray}
Therefore, by using the facts that $t/\omega(t)$ is nondecreasing and that $\omega$ satisfies \eqref{cond},
  \begin{eqnarray}\label{chemis30}
  \Big|f(\xi)\frac{S(\xi)-S(\zeta)}{\omega(|\xi-\zeta|)}\Big|&\lesssim& M \eta_{\omega}^{-1} \frac{|f(\xi)|}{\omega(d(\xi,\E))}
  \\\label{chemis3}&\lesssim&  M \eta_{\omega}^{-1}\|f\|_{\omega}.
  \end{eqnarray}
Using the inequalities \eqref{chemis1}, \eqref{chemis2} and \eqref{chemis3} we deduce that $\|Sf\|_{\omega}\leq c_{\omega,M} \|f\|_{\omega},$ and hence $Sf\in\cL.$ It remains to prove that if $f\in\cJ_{\omega}(\E)$ then so is for $Sf.$
Using \eqref{chemis0}
  \begin{eqnarray}\nonumber
&& \sup_{\xi,\zeta\in\E(\delta)\atop \xi\neq\zeta}\Big|\frac{Sf(\xi)-Sf(\zeta)}{\omega(|\xi-\zeta|)}\Big|
 \\\label{chemis4}&\leq&
 \sup_{\xi,\zeta\in\E(\delta)\setminus\E\atop \xi\neq\zeta}\Big|f(\xi)\frac{S(\xi)-S(\zeta)}{\omega(|\xi-\zeta|)}\Big|
 + \sup_{\xi,\zeta\in\E(\delta)\atop \xi\neq\zeta}\Big|\frac{f(\xi)-f(\zeta)}{\omega(|\xi-\zeta|)}\Big|,
  \end{eqnarray}
  for every $\delta\in]0,1].$ The desired result follows by combining \eqref{chemis20}, \eqref{chemis30} and \eqref{chemis4} and using the hypothesis that $f\in\cJ_{\omega}(\E).$
\end{proof}

For a function $f\in\cH^{\infty},$ we denote by $B_{f}$ and $S_{f}$ respectively  the Blaschke product associated to $\Z_{U_{_f}},$ and the singular part of $U_f.$ Similarly, $B_{_\cI}$ and $S_{_\cI}$ are respectively  the Blaschke product constructed from $\Z_{U_{_\cI}},$ and the singular part of $U_{_\cI}.$ For completeness, we give the proof of the  following classical lemma.

\begin{lem}\label{multi2}
Let $\cI\subseteq\cL$ be a closed ideal, where $\omega$ is an arbitrary modulus of continuity. There exists a sequence of functions
$\{g_k\ : \ k\in\N\}\subseteq\cI$ satisfying the following properties:
\begin{itemize}
  \item [$i.$] The greater common divisor of the singular functions $S_{g_{k}},$ $k\in\N,$  is equal to $S_{_\cI}$ and
  $$\lim_{k\rightarrow \infty}\|\mu_{_{S_{g_{k}}}}-\mu_{_{S_{_\cI}}}\|=0.$$
  \item [$ii.$]The sequence $\{B_{g_{k}}\ :\ k\in\N\}$ converges uniformly to $B_{_\cI}$  on each compact subset of $\D.$
\end{itemize}
\end{lem}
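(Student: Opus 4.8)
The lemma asserts the existence of functions $g_k \in \cI$ whose inner parts capture the singular and Blaschke factors of $U_{_\cI}$ in a precise limiting sense: the singular measures $\mu_{_{S_{g_k}}}$ converge in norm to $\mu_{_{S_{_\cI}}}$ (with $S_{_\cI}$ as their greatest common divisor), while the Blaschke products $B_{g_k}$ converge locally uniformly to $B_{_\cI}$. Since $U_{_\cI}$ is by definition the greatest inner common divisor of the inner parts of the nonzero elements of $\cI$, both $B_{_\cI}$ and $S_{_\cI}$ divide $U_{g}$ for every $g \in \cI \setminus \{0\}$, and no strictly larger inner divisor has this property. The content of the lemma is to realize this infimum-type characterization by an explicit sequence, and this is where I would start.

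**Plan of proof.** The plan is to exploit the divisibility characterization of $U_{_\cI}$ separately for the Blaschke and singular components, then combine. First I would recall that for any $f \in \cI\setminus\{0\}$, the zero set $\Z_{U_{_\cI}}$ is contained in $\Z_{U_f}$ and $\mu_{_{S_{_\cI}}} \leq \mu_{_{S_f}}$ as measures, because $U_{_\cI} \mid U_f$. The greatest-common-divisor property means that $B_{_\cI}$ is the Blaschke product on $\bigcap_{f} \Z_{U_f}$ (counting multiplicities) and $\mu_{_{S_{_\cI}}} = \inf_f \mu_{_{S_f}}$, the largest measure dominated by every $\mu_{_{S_f}}$. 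The key construction is to pick, for each $k$, a single function $g_k \in \cI$ (or a finite product of chosen elements) whose inner part's singular measure approximates this infimum to within $1/k$ in norm, and whose Blaschke zeros agree with those of $B_{_\cI}$ up to distance $1/k$ from the origin. Concretely, I would enumerate a countable family $\{f_j\} \subseteq \cI$ whose inner parts realize the gcd — such a family exists because the infimum defining $U_{_\cI}$ is attained along a countable subset, the underlying measure space and zero configurations being separable — and then set $g_k$ to be a suitable finite product $f_1 f_2 \cdots f_{n_k}$ (which lies in $\cI$ since $\cI$ is an ideal). Passing to products replaces the individual singular measures by their pointwise minimum in the limit and forces the common zeros to dominate, so that $\mu_{_{S_{g_k}}} \downarrow \mu_{_{S_{_\cI}}}$ and $B_{g_k} \to B_{_\cI}$.

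**The main obstacle.** The delicate point is ensuring the \emph{norm} convergence $\lim_k \|\mu_{_{S_{g_k}}} - \mu_{_{S_{_\cI}}}\| = 0$ rather than mere weak-$*$ convergence, together with the exactness of the gcd. For the Blaschke part, local uniform convergence on compacta of $\D$ follows once the zeros of $B_{g_k}$ (with multiplicity) decrease to exactly $\Z_{U_{_\cI}}$ inside each disk $|z| \leq 1 - 1/k$, and this is a routine consequence of uniform convergence of the defining products on compacta. For the singular part the issue is that taking products of inner functions adds singular measures, whereas we want their infimum; the right device is therefore not products but a careful diagonal selection. I expect the cleanest route is: choose $f_j \in \cI$ so that $\mu_{_{S_{f_j}}}\big(\T\big) \to \mu_{_{S_{_\cI}}}\big(\T\big)$ with $\mu_{_{S_{f_j}}} \geq \mu_{_{S_{_\cI}}}$, and observe that since each $\mu_{_{S_{f_j}}} - \mu_{_{S_{_\cI}}} \geq 0$ and the total masses converge, norm convergence of the differences is forced by the monotone-limit structure of the gcd. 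The main work is thus to justify that the infimum of the singular measures over $\cI$ is attained along a sequence in total variation norm — this is the analytic heart, and it rests on the observation that $\cI$ is closed under multiplication by $\cH^\infty$-units of controlled singular mass, which is exactly what Lemma \ref{multi} supplies.
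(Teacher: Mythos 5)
Your outline does not follow the paper's route, and its central mechanism has a genuine gap. First, the product construction goes in the wrong direction: if $g_k=f_1f_2\cdots f_{n_k}$ then $U_{g_k}=U_{f_1}\cdots U_{f_{n_k}}$, so the zero sets \emph{unite} (multiplicities adding) and the singular measures \emph{add}. Products move away from the greatest common divisor, not toward it; in particular $B_{g_k}$ would accumulate zeros far beyond $\Z_{U_{_\cI}}$, so claim $ii$ fails for this construction just as badly as claim $i$, and you offer no replacement for the Blaschke part. Second, and more fundamentally, the step you defer to the end --- that the infimum of the singular measures over $\cI$ is attained along a sequence in total variation norm --- \emph{is} the lemma; it cannot be waved through. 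The gcd $S_{_\cI}$ corresponds to the lattice infimum $\bigwedge_{f\in\cI}\mu_{_{S_f}}$, and a lattice infimum of a family of measures is in general not norm-approximable by members of the family: disjointly supported unit point masses have lattice infimum $0$ while each stays at total-variation distance $1$ from it. So the ideal structure of $\cI$ must enter in an essential way, and the tool you invoke, Lemma \ref{multi}, points the wrong way: it lets you \emph{multiply} by singular inner functions (adding singular mass), whereas approaching the gcd requires \emph{dividing out} singular factors while staying in $\cI$ --- which is essentially what Proposition \ref{lem2}, the consumer of this lemma, is meant to establish, so assuming it here would be circular.

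The paper's proof supplies exactly the missing mechanism, and it is quite different. One passes to the closure $\overline{\cI}$ of $\cI$ in the disk algebra $\cA(\D)$; this is a closed ideal of $\cA(\D)$ with the same hull $\E_{_\cI}$ and the same gcd inner factor $U_{_\cI}$, so by the Beurling--Rudin theorem it is the standard ideal determined by $(\E_{_\cI},U_{_\cI})$. Hence $U_{_\cI}O\in\overline{\cI}$ for an outer $O\in\cA(\D)$ vanishing exactly on $\E_{_\cI}$, and there exist $f_k\in\cI$ with $\|f_k-U_{_\cI}O\|_\infty\to 0$: a sequence in $\cI$ converging \emph{uniformly} to a function whose inner factor is exactly $U_{_\cI}$. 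The extraction lemma of Esterle--Strouse--Zouakia \cite[Lemma 1.3]{ESZ} then yields a subsequence $\{g_k\}$ whose Blaschke factors converge locally uniformly to $B_{_\cI}$ and whose singular measures converge in norm to $\mu_{_{S_{_\cI}}}$, with $S_{_\cI}$ as gcd. It is this detour through $\cA(\D)$, where closed ideals are completely classified, that converts the abstract lattice infimum into a norm-approximating sequence of actual elements of $\cI$; without it, or an equivalent substitute, your outline cannot be completed.
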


\begin{proof}
We denote by $\overline{\cI}$ the closedness of $\cI$ in $\cA(\D).$ According to Beurling-Rudin Theorem, we have
\begin{equation}\label{br}
\overline{\cI}:=\{g\in\cA(\D)\ :\ g/U_{_\cI}\in\cH^{\infty} \text{ and }g(z)=0,\ \text{for every } z\in\E_{_\cI}\}.
\end{equation}
Let now $O\in\cA(\D)$ be an outer function vanishing on $\E_{_\cI}.$ Since $\E_{_\cI}\supseteq\sigma(U_{_\cI})\cap\T,$ then $U_{_\cI}O\in\cA(\D).$
Thus $U_{_\cI}O\in\overline{\cI},$ by using \eqref{br}. We suppose that $U_{_\cI}O\notin\cI,$ otherwise the lemma is obvious. By applying again \eqref{br}, there exists a sequence of functions
$\{f_k\ : \ k\in\N\}\subseteq\cI$  converging uniformly to $U_{_\cI}O.$ As it is done in  \cite[Lemma 1.3]{ESZ}, we can extract a subsequence
$\{g_k\ : \ k\in\N\}$ of $\{f_k\ : \ k\in\N\}$ satisfying the desired results of the lemma.
\end{proof}

We also give the proof of the following simple lemma.

\begin{lem}\label{multi3}
Let $\{S_k:\ k\in\N\}$ be a sequence of singular functions such that $\text{supp}(\mu_{S_k})\subseteq \F,$ for every $k\in\N,$ where $\F$ is a closed subset of $\T.$
We suppose that $$\lim_{k\rightarrow \infty}\|\mu_{_{S_k}}-\mu_{_S}\|=0,$$
for some singular function $S.$ Then $\{S_k: \ k\in\N\}$ converges uniformly to $S$ on each compact subset of $\overline{\D}\setminus\F.$
\end{lem}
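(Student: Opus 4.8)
The plan is to work directly with the canonical representation of a singular inner function, namely
\[
S_k(z)=\exp\Big(-\int_{\T}\frac{e^{i\theta}+z}{e^{i\theta}-z}\,d\mu_{_{S_k}}(e^{i\theta})\Big),\qquad z\in\D,
\]
and similarly for $S$, and to control the exponents by the total variation convergence $\|\mu_{_{S_k}}-\mu_{_S}\|\to0$. The first step is to record that $\mu_{_S}$ is itself carried by $\F$: since each $\mu_{_{S_k}}$ vanishes on the relatively open set $\T\setminus\F$ and $\mu_{_{S_k}}\to\mu_{_S}$ in total variation, the limit measure vanishes there too, so the signed measure $\mu_{_{S_k}}-\mu_{_S}$ is supported in $\F$ for every $k$. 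This lets me take all the integrals below over $\F$ alone, which is precisely the region where the Cauchy kernel stays bounded.

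Next I would fix a compact set $K\subseteq\overline{\D}\setminus\F$ and put $d:=d(K,\F)>0$, positive because $K$ and $\F$ are disjoint compacta. For $z\in K$ and $e^{i\theta}\in\F$ one has $|e^{i\theta}+z|\le 2$ and $|e^{i\theta}-z|\ge d$, so $\big|\tfrac{e^{i\theta}+z}{e^{i\theta}-z}\big|\le 2/d$ uniformly on $K\times\F$. Writing $w_k(z)$ and $w(z)$ for the two exponents appearing in the formulas above, this gives
\[
|w_k(z)-w(z)|\le\int_{\F}\Big|\frac{e^{i\theta}+z}{e^{i\theta}-z}\Big|\,d|\mu_{_{S_k}}-\mu_{_S}|\le\frac{2}{d}\,\|\mu_{_{S_k}}-\mu_{_S}\|,
\]
which tends to $0$ as $k\to\infty$, uniformly for $z\in K$. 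Hence the exponents converge uniformly on every such $K$.

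It then remains to upgrade convergence of the exponents to convergence of the functions themselves. Since $\|\mu_{_{S_k}}\|\to\|\mu_{_S}\|$, the masses $\|\mu_{_{S_k}}\|$ are bounded, so the same kernel estimate shows that the values $w_k(z)$ and $w(z)$ all lie in one fixed bounded disk of $\C$ for $z\in K$. Applying the elementary inequality $|e^{a}-e^{b}|\le|a-b|\,e^{\max(|a|,|b|)}$ with $a=w_k(z)$, $b=w(z)$ then turns the uniform convergence $w_k\to w$ into $S_k=e^{w_k}\to e^{w}=S$ uniformly on $K$, which is the assertion. I do not expect a real obstacle: the heart of the matter is the single kernel bound, and the only two points deserving attention are verifying that $\mu_{_S}$ is supported on $\F$ and securing the uniform boundedness of the exponents needed to linearise the exponential.
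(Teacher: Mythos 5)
Your proposal is correct and follows essentially the same route as the paper: the same kernel bound $\bigl|\tfrac{e^{i\theta}+z}{e^{i\theta}-z}\bigr|\le 2/d(\K,\F)$ applied to the total-variation difference of the measures, followed by an elementary estimate converting uniform convergence of the exponents into uniform convergence of the exponentials (the paper uses $|S|\le 1$ together with $|e^{w}-1|\le 2|w|$ near $0$, you use $|e^{a}-e^{b}|\le|a-b|e^{\max(|a|,|b|)}$ with uniformly bounded exponents, which is an immaterial variation). Your explicit verification that $\mu_{_S}$ is carried by $\F$, which the paper leaves implicit, is a welcome bit of extra care.
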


\begin{proof}
We let $\K$ be a compact subset of $\overline{\D}\setminus\F.$
It is obvious that
\begin{eqnarray}\label{moin}
\Big|\int_{\T}\frac{e^{i\theta}+z}{e^{i\theta}-z}d\mu_{_{S_{k}}}(e^{i\theta})-
\int_{\T}\frac{e^{i\theta}+z}{e^{i\theta}-z}d\mu_{_{S}}(e^{i\theta})\Big|
\leq\frac{2\|\mu_{_{S_k}}-\mu_{_S}\|}{\delta_{\K}},\qquad z\in\K,
\end{eqnarray}
where $\delta_{\K}:=\inf\limits_{z\in\K}d(z,\F).$ We remark that $\delta_{\K}>0.$
Using the estimate \eqref{moin} and the simple fact that there exists a neighborhood $\mathcal{U}$ of $0$ such that
$$|e^{z}-1|\leq 2|z|,\qquad z\in\mathcal{U},$$
we deduce that for sufficiently large numbers $k\in\N,$
\begin{eqnarray}
|S_k(z)-S(z)|\leq \Big|\exp\Big\{\frac{1}{2\pi}\int_{\T}\frac{e^{i\theta}+z}{e^{i\theta}-z}d(\mu_{_S}-\mu_{_{S_k}})(e^{i\theta})\Big\}-1\Big| \leq \frac{4\|\mu_{_{S_k}}-\mu_{_S}\|}{\delta_{\K}},
\end{eqnarray}
for all points $z\in\K.$ Which gives the desired result of the lemma.
\end{proof}

With a singular function $S\in\cH^{\infty}$ and a closed subset $\K\subseteq \T$ we associate the following singular function $$\big(S\big)_{\K}(z):=\exp\Big\{-\frac{1}{2\pi}\int_{\K}\frac{e^{i\theta}+z}{e^{i\theta}-z}d\mu_{_S}(e^{i\theta})\Big\},\qquad z\in\D.$$

\subsection*{ \it Proof of Proposition \ref{lem2}\label{sect32}}

Let $f\in\cI$ be a function. We define $B_{\{f,n\}}$ and $B_{\{\cI,n\}}$ to be respectively the Blaschke product with zeros $\Z_{U_f}\cap\D_{n}$ and  $\Z_{U_{_\cI}}\cap\D_{n},$ where
$$\D_{n}:=\{z\in\D\text{ : }|z|<\frac{n}{n+1}\}, \qquad n\in\N\setminus\{0\}.$$
For a fixed $n\in\N\setminus\{0\},$ we set
$$\cJ_n:=\{g\in\cL\ :\ gB_{\{\cI,n\}}\in\cI\}.$$
Let $\{g_k\ : \ k\in\N\}\subseteq\cI$ be the sequence of Lemma \ref{multi2}.
Since $\Z_{B_{\{\cI,n\}}}$ is finite,
then $B_{\{g_{k},n\}}=B_{\{\cI,n\}}$ for a sufficiently large number $k\in\N,$
by using the property $(ii)$ of Lemma \ref{multi2}.
Then $g_k/B_{\{g_{k},n\}}\in\cJ_n,$ for a large $k\in\N.$
Thus the zero set $\bigcap\limits_{g\in\cJ_n} g^{-1}(0)$ of the ideal $\cJ_n$ is contained in $\overline{\D}\setminus\D_{n},$ since
$g_k/B_{\{g_{k},n\}}$ does not vanish on $\D_{n}.$
Therefore $B_{\{f,n\}}$ is invertible in the quotient algebra $\cL/\cJ_n.$

We now let $\pi_n:\cL\to \cL/\cJ_n$
be the canonical quotient map. Since $f\in\cI$ then $f\in\cJ_n,$ and hence
\begin{eqnarray*} 0=\pi_n\big(f\big)
=\pi_n\big(f/B_{\{f,n\}}\big)\pi_n \big(B_{\{f,n\}}\big).
\end{eqnarray*}
Therefore $\pi_n\big(f/B_{\{f,n\}}\big)=0,$ and consequently
$f/B_{\{f,n\}}\in\cJ_n.$ It follows that
$$B_{\{\cI,n\}}(f/B_{\{f,n\}})\in\cI.$$
Since the sequence $\{B_{\{\cI,n\}}(B_f/B_{\{f,n\}})\ :\ n\in\N\}$ converges uniformly on compact subsets of $\overline{\D}\setminus\E_f$ to $B_{_\cI},$
and since $S_{f}O^2_{f}$ is continuous on $\overline{\D}$ and vanishes on $\E_f,$ then
$$\lim\limits_{n\rightarrow\infty}\|B_{\{\cI,n\}}(B_f/B_{\{f,n\}})S_{f}O^2_{f}-B_{_\cI}S_{f}O^2_{f}\|_{\infty}=0.$$
Thus
$$\lim\limits_{n\rightarrow\infty}\|B_{\{\cI,n\}}(B_f/B_{\{f,n\}})S_{f}O^2_{f}-B_{_\cI}S_{f}O^2_{f}\|_{\omega}=0,$$
by using Theorem \ref{main2} and the fact that $fO_f\in\cJ_\omega(\E_f),$ as product of two functions $f,O_f\in\cI_\omega(\E_f).$
Therefore
$$B_{_\cI}S_{f}O^2_{f}\in\cI.$$

We let $\varepsilon$ be a nonzero real number such that $\gamma_{\varepsilon}:=(a e^{i\varepsilon},b e^{-i\varepsilon})\subset\gamma:=(a,b),$ where
$(a,b)\subseteq\T\setminus \E_{_\cI}$ is the open arc joining the points $a,b\in \E_{_\cI}.$
Using Lemma \ref{fliyo}, we have $L_{\varepsilon}:=p_{{\varepsilon}}\times \big(O_f\big)_{\gamma_{\varepsilon}}\in\cL,$  where
$$\displaystyle p_{{\varepsilon}}(z):=(z\overline{a}e^{-i\varepsilon}-1)
(z\overline{b}e^{i\varepsilon}-1),\qquad z\in\overline{\D}.$$
It is obvious that $$\E_{{L_{\varepsilon}}}=\{a e^{i\varepsilon},b e^{-i\varepsilon}\}\cup\big(\overline{\gamma_{\varepsilon}}\cap\E_f\big).$$
Since $\sigma((S_{f})_{{\gamma_{\varepsilon}}})\subseteq \overline{\gamma_{\varepsilon}}\cap\E_f\subseteq\E_{{L_{\varepsilon}}},$ then $\big(S_{f}\big)_{\gamma_{\varepsilon}}L_{\varepsilon}\in\cL,$ by applying Lemma \ref{multi}. Furthermore, we remark that $B_{_\cI}\big(S_{f}\big)_{\T\setminus\gamma_{\varepsilon}}O_{f}\in\cL,$
by using the F-property of $\cL.$
We just proved above that $B_{_\cI}S_{f}O^2_{f}\in\cI,$ then
\begin{eqnarray*}
0=\pi\big(L_{\varepsilon}B_{_\cI}S_{f}O^2_{f}\big)=
\pi\big(\big(S_{f}\big)_{\gamma_{\varepsilon}}L_{\varepsilon}\big)
\times \pi\big(B_{_\cI}\big(S_{f}\big)_{\T\setminus\gamma_{\varepsilon}}O^2_{f}\big),
\end{eqnarray*}
where $\pi:\cL\to \cL/\cI$ is the canonical quotient map.
The function $\big(S_{f}\big)_{\gamma_{\varepsilon}}L_{\varepsilon}$ is invertible in the
quotient algebra $\cL/\cI,$ since its zero set $\E_{{L_{\varepsilon}}}$ does not intersect with the spectrum $\sigma(U_{_\cI})\cup \E_{_\cI}$ of the Banach algebra $\cL/\cI.$ Thus $\pi\big(B_{_\cI}\big(S_{f}\big)_{\T\setminus\gamma_{\varepsilon}}O^2_{f}\big)=0,$ and hence
$$B_{_\cI}\big(S_{f}\big)_{\T\setminus\gamma_{\varepsilon}}O^2_{f}\in\cI.$$

Using Lemma \ref{multi3}, we deduce that the sequence of elements $\big(S_{f}\big)_{\T\setminus\gamma_{\varepsilon}}$ converges uniformly on compact subsets of $\overline{\D}\setminus\E_f$ to $\big(S_{f}\big)_{\T\setminus\gamma},$ when $\varepsilon$ goes to $0.$
This fact and the clearly fact that $B_{_\cI}O^2_{f}$ is continuous on $\overline{\D}$ and vanishes on $\E_f,$ give simply
$$\lim\limits_{\varepsilon\rightarrow 0}\|B_{_\cI}\big(S_{f}\big)_{\T\setminus\gamma_{\varepsilon}}O^2_{f}-B_{_\cI}\big(S_{f}\big)_{\T\setminus\gamma}O^2_{f}\|_{\infty}=0.$$
Then
$$\lim\limits_{\varepsilon\rightarrow0}
\|B_{_\cI}\big(S_{f}\big)_{\T\setminus\gamma_{\varepsilon}}O^2_{f}-B_{_\cI}\big(S_{f}\big)_{\T\setminus\gamma}O^2_{f}\|_{\omega}=0,$$
by using Theorem \ref{main2}.
Thus $$B_{_\cI}\big(S_{_f}\big)_{\T\setminus\gamma}O^2_{f}\in\cI.$$
We now argue similarly  to arrive at
$$B_{_\cI}\big(S_{f}\big)_{\T\setminus\Delta_{_N}}O^2_{f}\in\cI,$$
where $\Delta_{N}:=\bigcup\limits_{n\leq N}(a_n,b_n)\in\Omega_{_{\E_{_\cI}}}.$
As it is just done above, a simple application of Lemma \ref{multi3} gives
$$\lim\limits_{N\rightarrow \infty}\|B_{_\cI}\big(S_{f}\big)_{\T\setminus\Delta_{_N}}O^2_{f}-B_{_\cI}\big(S_{f}\big)_{\E_{_\cI}}O^2_{f}\|_{\infty}=0.$$
Then
$$\lim\limits_{N\rightarrow \infty}\|B_{_\cI}\big(S_{f}\big)_{\T\setminus\Delta_{_N}}O^2_{f}-B_{_\cI}\big(S_{f}\big)_{\E_{_\cI}}O^2_{f}\|_{\omega}=0,$$
by using again Theorem  \ref{main2}.
Therefore $$B_{_\cI}\big(S_{f}\big)_{\E_{_\cI}}O^2_{f}\in\cI.$$
We deduce
\begin{equation*}
B_{_\cI}\big(S_{g_k}\big)_{\E_{_\cI}}O^2_{g_k}\in\cI,\qquad \text{for every } k\in\N,
\end{equation*}
and hence
\begin{equation} \label{ded0}
B_{_\cI}\big(S_{g_k}\big)_{\E_{_\cI}}O^2_{g_k}O^2_{f}\in\cI,\qquad \text{for every } k\in\N,
\end{equation}
where $\{g_k\ : \ k\in\N\}\subseteq\cI$ is the sequence of  Lemma \ref{multi2}.
Using respectively the F-Property of $\cL$ and Lemma \ref{multi}, we obtain $B_{_\cI}O_{f}\in\cL$ and
$\big(S_{g_k}\big)_{\E_{_\cI}}O_{f}\in\cL.$ Since this two functions vanish on $\E_{_\cI},$ we obtain
$B_{_\cI}\big(S_{g_k}\big)_{\E_{_\cI}}O^2_{f}\in\cJ_\omega(\E_{_\cI}).$
Thus, by applying Proposition \ref{lem22},
$$B_{_\cI}\big(S_{g_k}\big)_{\E_{_\cI}}O^2_{f}\in\cI,\qquad \text{for every } k\in\N,$$
since we proved \eqref{ded0}. We deduce that the singular functions $\big(S_{g_k}\big)_{\E_{_\cI}}$ are all divided by $S_{_\cI},$
and since
$$\|\mu_{{\big(S_{g_{k}}\big)_{\E_{_\cI}}}}-\mu_{_{S_{_\cI}}}\|\leq\|\mu_{_{S_{g_{k}}}}-\mu_{_{S_{_\cI}}}\|,\qquad \text{for every } k\in\N,$$
then, by using Lemma \ref{multi2},
$$\lim_{k\rightarrow \infty}\|\mu_{{\big(S_{g_{k}}\big)_{\E_{_\cI}}}}-\mu_{_{S_{_\cI}}}\|=0.$$
Thus
$$\lim\limits_{k\rightarrow \infty}\|B_{_\cI}\big(S_{g_k}\big)_{\E_{_\cI}}O^2_{f}-U_{_\cI}O^2_{f}\|_{\infty}=0,$$
by using Lemma \ref{multi3} and also the simple fact that $B_{_\cI}O^2_{f}$ is continuous on $\overline{\D}$ and vanishes on $\E_{_\cI}.$
Then
$$\lim\limits_{k\rightarrow \infty}\|B_{_\cI}\big(S_{g_k}\big)_{\E_{_\cI}}O^2_{f}-U_{_\cI}O^2_{f}\|_{\omega}=0,$$
by using once more Theorem  \ref{main2}. Hence $U_{_\cI}O^2_{f}\in\cI,$ which gives the desired result
of the proposition.

\section{\bf Proof of Theorem \ref{main2}. \label{sect5}}

Before giving the proof of the theorem, we need first to establish some needed results. Some of these results are actually inspired from
\cite{Bou1,Shi,HS}.

\subsection{Some properties of functions from  $\cJ_{\omega}(\E)$\label{sect51}}

The next lemma will simplify the proof of the following one.

\begin{lem}\label{form0}
Let $g\in \cJ_{\omega}(\E)$ be a function, where $\omega$ is an arbitrary modulus of continuity. For every $\nu\geq0,$ we have
\begin{eqnarray}\nonumber
&&\exp\Big\{
\frac{1}{2\pi}\int_{\xi\in\T}\frac{1-|z|^2}{|\xi-z|^2}\log\big(|g(\xi)-g(z/|z|)|+\nu\big)|d\xi|\Big\}
\\\label{matin2}&\leq& o(\omega(1-|z|))+A\nu,\qquad\text{as } d(z,\E)\rightarrow 0\text{ and } z\in\D,
\end{eqnarray}
where $A\geq 1$ is an absolute constant.
\end{lem}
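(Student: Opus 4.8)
The plan is to read the left-hand side as the value at $z$ of an outer function. Writing $w:=z/|z|\in\T$, $h:=1-|z|$, $\delta:=d(z,\E)$ and $\psi(\xi):=|g(\xi)-g(w)|+\nu$, the exponent is exactly the Poisson integral $P[\log\psi](z)=\frac1{2\pi}\int_\T\frac{1-|z|^2}{|\xi-z|^2}\log\psi(\xi)\,|d\xi|$, so the quantity to bound is $\exp\{P[\log\psi](z)\}$. I would record two consequences of $g\in\cJ_{\omega}(\E)$. First, the crude global bound $\psi\le 2\|g\|_\infty+\nu=:C_0+\nu$. Second, since $h\le\delta$ forces $w\in\E(2\delta)$, the defining property of $\cJ_{\omega}(\E)$ provides a gauge $\epsilon(\cdot)$ with $\epsilon(0^+)=0$ such that $|g(\xi)-g(w)|\le\epsilon(\delta')\,\omega(|\xi-w|)$ whenever $\xi,w\in\E(\delta')$; since $d(\xi,\E)\le 2\delta+r$ for $|\xi-w|\le r$, this gives $\psi(\xi)\le\epsilon(2\delta+r)\,\omega(|\xi-w|)+\nu$ on $\{|\xi-w|\le r\}$. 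I would stress from the start that one must \emph{not} replace $\exp\{P[\log\psi]\}$ by the arithmetic mean $P[\psi]$ (Jensen): that bound is genuinely too lossy and fails to yield even $O(\omega(h))$, because it is the geometric mean that detects the smallness of $\psi$ at the relevant scale $h$.

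The analytic core is the following model estimate, valid for an arbitrary modulus of continuity: for all $c,\nu\ge0$,
\[
\exp\Big\{\frac1{2\pi}\int_\T\frac{1-|z|^2}{|\xi-z|^2}\log\big(c\,\omega(|\xi-w|)+\nu\big)\,|d\xi|\Big\}\le A_0\,\big(c\,\omega(h)+\nu\big),
\]
with $A_0$ absolute. To prove it I set $m:=c\,\omega(h)+\nu$ and use the two monotonicities of $\omega$: for $|\xi-w|\le h$ one has $\omega(|\xi-w|)\le\omega(h)$, while for $|\xi-w|\ge h$ the map $t\mapsto\omega(t)/t$ being nonincreasing gives $\omega(|\xi-w|)\le(|\xi-w|/h)\,\omega(h)$; hence $\big(c\,\omega(|\xi-w|)+\nu\big)/m\le\max(1,|\xi-w|/h)$. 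Taking logarithms, $\log\psi\le\log m+\big(\log(|\xi-w|/h)\big)_+$, and integrating against the Poisson probability measure $d\mu_z:=\frac1{2\pi}\frac{1-|z|^2}{|\xi-z|^2}|d\xi|$ leaves only the tail $\int_{|\xi-w|>h}\log\frac{|\xi-w|}{h}\,d\mu_z$, which after the substitution $|\xi-w|\approx h s$ is dominated by the convergent absolute integral $\int_1^\infty\frac{\log s}{1+s^2}\,ds$. Exponentiating gives the claim.

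The remaining step is to pass from the model to the true $\psi$ by a near/far splitting at a radius $\rho=\rho(z)$, and this is where the main difficulty lies. On $\{|\xi-w|\le\rho\}$ I would use the second consequence above with $c=\epsilon^{**}:=\epsilon(2\delta+\rho)$ and invoke the model estimate; on $\{|\xi-w|>\rho\}$ I would use the global bound $\psi\le C_0+\nu$ together with the harmonic-measure estimate $\mu_z(\{|\xi-w|>\rho\})\lesssim h/\rho$. The delicate point is that removing the far part perturbs the main term, after taking logarithms, by a ``mass-leakage'' amount of order $\big|\log(\epsilon^{**}\omega(h)+\nu)\big|\cdot(h/\rho)$; hence the cutoff must make both $\epsilon^{**}\to0$ (to gain the $o(\omega(h))$) and $\frac h\rho\log\frac1{\epsilon^{**}\omega(h)}\to0$ (so the leakage is harmless). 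These pull in opposite directions, smallness wanting $\rho$ small and negligible leakage wanting $\rho$ not too small.

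I would resolve the tension with the explicit choice $\rho:=\max(\sqrt h,\rho_1)$, where $\rho_1:=\inf\{r\ge0:\epsilon(2\delta+r)\ge h^{1/4}\}$. Using only $\epsilon(0^+)=0$ and the monotonicity of $\epsilon$, this forces simultaneously $\rho\ge\sqrt h$, $\rho\to0$, and $h^{1/4}\le\epsilon^{**}\to0$ as $d(z,\E)\to0$. Then $\log\frac1{\epsilon^{**}}\le\frac14\log\frac1h$, together with $\omega(h)\ge\tfrac12\omega(2)h$ (from $\omega(t)/t$ nonincreasing) and $\rho\ge\sqrt h$, bounds the leakage by a multiple of $\sqrt h\log\frac1h\to0$, while the far contribution is $\lesssim\frac h\rho\log^+\! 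C_0\le\sqrt h\,\log^+\!C_0\to0$. Assembling, $P[\log\psi](z)\le\log(\epsilon^{**}\omega(h)+\nu)+A_1$ for an absolute $A_1$, so the left-hand side is at most $A\,(\epsilon^{**}\omega(h)+\nu)=o(\omega(h))+A\nu$ with $A=e^{A_1}\ge1$, since $\epsilon^{**}\to0$. I expect the fussiest parts of the write-up to be the verification that $\rho$ enjoys all the stated properties (in particular $\rho_1\to0$) and the sign-bookkeeping in the leakage term $\log(\epsilon^{**}\omega(h)+\nu)\,\mu_z(\{|\xi-w|\le\rho\})$, where the factor $\mu_z(\{|\xi-w|\le\rho\})=1-O(h/\rho)$ must be compared against a possibly large negative logarithm.
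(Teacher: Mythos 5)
Your proof is, at its core, the same as the paper's: interpret the left-hand side as $\exp\{P[\log\psi](z)\}$; on the part of $\T$ near $w=z/|z|$ dominate $\psi$ by a small gauge times $\omega(|\xi-w|)$ plus $\nu$, using the definition of $\cJ_{\omega}(\E)$; use the two monotonicities of $\omega$ to replace $\omega(|\xi-w|)$ by $\max(1,|\xi-w|/h)\,\omega(h)$, so that after integration against the Poisson measure only $\log(\mathrm{gauge}\cdot\omega(h)+\nu)$ plus a convergent logarithmic tail remains; and bound the far part by its harmonic measure. This is exactly the paper's splitting into $\Gamma_1(z),\Gamma_2(z),\Gamma_3(z)$, whose computation it quotes from \cite[Lemma A.2]{Bou1}. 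The one structural difference is the cutoff: the paper fixes $\varepsilon$ first and uses the \emph{fixed} radius $c_\varepsilon$ furnished by the definition of $\cJ_{\omega}(\E)$ (valid for all $z$ with $d(z,\E)\leq c_\varepsilon$). With a cutoff independent of $z$, the leakage is $O_\varepsilon(h\log(1/h))\to 0$ automatically, so the ``tension'' you describe never arises; the resulting bound $P[\log\psi](z)\leq\log(\varepsilon\,\omega(h)+\nu)+O(1)$ for $d(z,\E)$ small, with $O(1)$ absolute, is precisely the claimed $o(\omega(h))+A\nu$ statement. Your $z$-dependent cutoff re-creates this tension and then must resolve it.

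It is in that resolution that your proposal has a genuine gap: the claim that $\epsilon(0^+)=0$ and monotonicity of $\epsilon$ alone force $\rho\to0$ and $h^{1/4}\leq\epsilon^{**}\to0$ is false. First, if $\epsilon$ vanishes on an interval $[0,\delta_0]$ with $\delta_0>0$, then $\epsilon(2\delta+r)=0<h^{1/4}$ for all $r\leq\delta_0-2\delta$, so $\rho_1\geq\delta_0-2\delta$ and $\rho\not\to0$ (nor need $\epsilon^{**}\to0$). In the present setting this degenerate case occurs only when $g\equiv0$, because $\epsilon(\delta_0)=0$ makes $g$ vanish identically on the positive-measure set $\E(\delta_0)$; but ruling it out is an extra analytic argument that the two properties of $\epsilon$ you invoke do not supply. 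Second, $\epsilon$ need not be continuous: when $\rho_1>\sqrt h$ and the level $h^{1/4}$ falls inside a jump of $r\mapsto\epsilon(2\delta+r)$, the set $\{r\geq0:\epsilon(2\delta+r)\geq h^{1/4}\}$ is open at its infimum, so $\epsilon^{**}=\epsilon(2\delta+\rho_1)<h^{1/4}$, and the inequality $\log(1/\epsilon^{**})\leq\frac14\log(1/h)$ on which your leakage bound rests fails. Both defects disappear, and $\rho_1$ becomes superfluous, if you take $\rho:=\sqrt h$ and run the near-region estimate with the floored gauge $\tilde\epsilon:=\max\{\epsilon(2\delta+\sqrt h),\,h^{1/4}\}$ (weakening the upper bound for $\psi$ costs nothing): then $\tilde\epsilon\leq\max\{\epsilon(2\delta+\sqrt\delta),\,\delta^{1/4}\}\to0$ and $\log(1/\tilde\epsilon)\leq\frac14\log(1/h)$ hold by construction, for every $g$ including $g\equiv0$, and the rest of your argument closes as written.
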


\begin{proof}
Since $g\in \cJ_{\omega}(\E),$
\begin{equation}\label{matin0}
\lim_{\delta\rightarrow 0}\sup_{\xi,\zeta\in\E(\delta)\atop \xi\neq\zeta}\frac{|g(\xi)-g(\zeta)|}{\omega(|\xi-\zeta|)}=0.
\end{equation}
Let $\varepsilon\leq1$ be a positive number. By using \eqref{matin0}, there exists a positive number
$c_\varepsilon$  such that
$$|g(\xi)-g(z/|z|)|\leq \varepsilon\ \omega(|\xi-z/|z||)$$
for every $z\in\D$ for which $d(z,\E)\leq c_\varepsilon$ and for every $\xi\in\T$ satisfying $|\xi-z/|z||\leq c_\varepsilon.$
For a point $z\in\D,$ we divide $\T$ into the following three parts
\begin{eqnarray*}
\Gamma_1(z)&:=& \big\{\xi\in\T \ :\ |\xi-z/|z||\leq 1-|z|\leq c_{\varepsilon}\big\},\\
\Gamma_2(z)&:=& \big\{\xi\in\T \ :\ 1-|z|\leq|\xi-z/|z||\leq c_{\varepsilon}\big\},\\
\Gamma_3(z)&:=& \big\{\xi\in\T \ :\ c_{\varepsilon}\leq|\xi-z/|z||\big\}.
\end{eqnarray*}
As in the proof of \cite[Lemma A.2]{Bou1} (with  $\delta=\nu$ ) we deduce that, for all points $z\in\D$ sufficiently close to $\E,$
\begin{eqnarray}
\nonumber&&\frac{1}{2\pi}\int_{\xi\in\T}\frac{1-|z|^2}
{|\xi-z|^2}\log\big(|g(\xi)-g(z/|z|)|+\nu\big)|d\xi|
\\\nonumber&\leq& \frac{1}{2\pi}\int_{\Gamma_1(z)}+\frac{1}{2\pi}\int_{\Gamma_2(z)}+\frac{1}{2\pi}\int_{\Gamma_3(z)}
\\\label{matin3}&\leq& \log\big(\varepsilon\ \omega(1-|z|)+\nu\big)+c \int_{t\geq 1}\frac{\log(t)}{t^2}dt
-\varepsilon\log(\varepsilon),
\end{eqnarray}
where $c>0$ is a constant. Hence \eqref{matin2} is deduced  from  \eqref{matin3}.
\end{proof}

An application of Lemma \ref{form0} provide us with the following proposition.

\begin{prop}\label{coincide}
Let $\omega$ be an arbitrary modulus of continuity. Then
$\cJ_{\omega}(\E)=\cK_{\omega}(\E),$
where
$$\cK_{\omega}(\E):=\Big\{f\in\cI_{\omega}(\E)\ :\
\lim_{\delta\rightarrow 0}\sup_{d(z,\E),d(w,\E)\leq\delta\atop z,w\in\D \text{ and } z\neq w}\frac{|f(z)-f(w)|}{\omega(|z-w|)}=0\Big\}.$$
\end{prop}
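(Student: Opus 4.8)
The plan is to prove the two inclusions separately, the inclusion $\cK_\omega(\E)\subseteq\cJ_\omega(\E)$ being the elementary one. Indeed, every $f\in\cK_\omega(\E)$ is continuous on $\overline{\D}$, so for distinct boundary points $\xi,\zeta\in\E(\delta)$ one may approximate them by interior points $z,w\in\D$ with $d(z,\E),d(w,\E)\leq 2\delta$ and pass to the limit in the quotient $|f(z)-f(w)|/\omega(|z-w|)$; this shows that the boundary supremum defining $\cJ_\omega(\E)$ is dominated by the interior supremum defining $\cK_\omega(\E)$ (with $\delta$ replaced by $2\delta$), whence $f\in\cJ_\omega(\E)$. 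All the work therefore lies in the reverse inclusion $\cJ_\omega(\E)\subseteq\cK_\omega(\E)$.

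Fix $g\in\cJ_\omega(\E)$ and write $\hat z:=z/|z|$ for the radial projection of $z\in\D\setminus\{0\}$. First I would extract from Lemma \ref{form0} the \emph{radial estimate}
$$|g(z)-g(\hat z)|=o\big(\omega(1-|z|)\big)\qquad\text{as }d(z,\E)\to0.$$
To this end, fix $z$ and regard $h(w):=g(w)-g(\hat z)$ as an analytic function of $w\in\D$, continuous on $\overline{\D}$. Writing $\psi(v):=\log(e^{v}+\nu)$ one checks that $\psi$ is convex and increasing, so $\log(|h|+\nu)=\psi(\log|h|)$ is subharmonic; Poisson domination at the point $z$ then bounds $\log(|h(z)|+\nu)$ by the Poisson integral of $\log(|g-g(\hat z)|+\nu)$ over $\T$. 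Exponentiating and invoking Lemma \ref{form0} yields $|g(z)-g(\hat z)|+\nu\leq o(\omega(1-|z|))+A\nu$; letting $\nu\to0$ gives the radial estimate.

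Next I would upgrade this to a \emph{derivative estimate}
$$|g'(z)|=o\Big(\frac{\omega(1-|z|)}{1-|z|}\Big)\qquad\text{as }d(z,\E)\to0,$$
by Cauchy's formula on the circle $|u-z|=(1-|z|)/2$, subtracting the constant $g(\hat z)$: for $u$ on this circle one has $d(u,\E)$ small, $1-|u|\asymp 1-|z|$, and $|\hat u-\hat z|\lesssim 1-|z|$ (the radial projection is Lipschitz near $\T$), so the radial estimate together with the defining condition of $\cJ_\omega(\E)$ bounds $|g(u)-g(\hat z)|$ by $o(\omega(1-|z|))$; dividing by $1-|z|$ gives the claim.

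Finally I would prove the $\cK_\omega(\E)$ condition by splitting, for $z,w\in\D$ near $\E$ with $\rho:=|z-w|$, into two regimes. When $\rho\geq\frac14\max(1-|z|,1-|w|)$ I would use the three-term decomposition $g(z)-g(w)=\big(g(z)-g(\hat z)\big)+\big(g(\hat z)-g(\hat w)\big)+\big(g(\hat w)-g(w)\big)$, bounding the outer terms by the radial estimate and the middle one by the $\cJ_\omega(\E)$ condition, and comparing $\omega(1-|z|),\omega(1-|w|),\omega(|\hat z-\hat w|)$ with $\omega(\rho)$ via \eqref{omega} and the Lipschitz bound for the projection. When $\rho\leq\frac14\max(1-|z|,1-|w|)$ the two distances $1-|z|,1-|w|$ are comparable and exceed $\rho$, so I would estimate $|g(z)-g(w)|\leq\rho\sup_{[z,w]}|g'|$ and insert the derivative estimate, using that $t\mapsto\omega(t)/t$ is nonincreasing to replace $\omega(1-|\zeta|)/(1-|\zeta|)$ by $\omega(\rho)/\rho$. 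In both regimes the bound is $o(1)\,\omega(\rho)$ uniformly, which is precisely the condition defining $\cK_\omega(\E)$. The main obstacle is the first upgrade: correctly deriving the pointwise radial estimate from the integral bound of Lemma \ref{form0}, which hinges on the subharmonicity of $\log(|h|+\nu)$ and on controlling the $\nu\to0$ passage; thereafter one must be vigilant that every $o(1)$ is uniform in the relevant points, the "deep and close" regime genuinely requiring the interior derivative estimate rather than a boundary comparison.
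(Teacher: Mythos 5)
Your proposal is correct and follows essentially the same route as the paper's proof: the same radial estimate $|g(z)-g(z/|z|)|=o(\omega(1-|z|))$ extracted from Lemma \ref{form0} via subharmonicity of $\log|g-g(z/|z|)|$, the same splitting according to whether $|z-w|$ exceeds a fixed fraction of $\max\{1-|z|,1-|w|\}$, with the three-term decomposition through the radial projections in the far regime and a Cauchy-formula derivative bound in the close regime. The only deviations (letting $\nu\to0$ rather than taking $\nu=0$ directly, subtracting $g(z/|z|)$ instead of $g(z)$ in the Cauchy integral, and the thresholds $1/4$ versus $1/2$) are cosmetic.
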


\begin{proof} It is obvious that $\cJ_{\omega}(\E)\supseteq\cK_{\omega}(\E).$
We now let $g\in\cJ_{\omega}(\E)$ be a function and $\varepsilon$ be a fixed positive number.
We remark that,  for $z,w\in\D$ such that $\min\{|z|,|w|\}\geq\frac{1}{2},$
$$|z-w|\geq \frac{1}{4}\Big|\frac{z}{|z|}-\frac{w}{|w|}\Big|.$$
For two distinct points $z,w\in\D,$  we have two possible cases:
$$\text{Case 1. } |z-w|\geq \frac{1}{2}\max\{1-|z|,1-|w|\}\quad \text{or}\quad \text{Case 2. } |z-w|\leq \frac{1}{2}\max\{1-|z|,1-|w|\}.$$
We let $z,w\in\D$ be two distinct points  such that $\min\{|z|,|w|\}\geq\frac{1}{2}.$
In the first case, by using the facts that both $\omega$ and $t\mapsto t/\omega(t)$ are nondecreasing functions, and \eqref{omega},
\begin{eqnarray}\label{final1}
\frac{|g(z)-g(w)|}{\omega(|z-w|)}\leq 2\frac{\big|g((z)-g(\frac{z}{|z|})\big|}{\omega(1-|z|)}
+ 4\frac{\big|g(\frac{z}{|z|})-g(\frac{w}{|w|})\big|}{\omega(|\frac{z}{|z|}-\frac{w}{|w|}|)}
+ 2\frac{\big|g(w)-g(\frac{w}{|w|})\big|}{\omega(|1-|w||)}.
\end{eqnarray}
Furthermore, by applying Lemma \ref{form0} with $\nu=0,$
\begin{eqnarray}\nonumber
\big|g(z)-g(\frac{z}{|z|})\big| &\leq& \exp\Big\{
\frac{1}{2\pi}\int_{\xi\in\T}\frac{1-|z|^2}{|\xi-z|^2}\log\big(\big|g(\xi)-g(\frac{z}{|z|})\big|\big)|d\xi|\Big\}
\\\label{final2}&=& o(\omega(1-|z|)),\qquad\text{as } d(z,\E)\rightarrow 0.
\end{eqnarray}
Thus, by combining the estimates \eqref{final1} and \eqref{final2}, and using our assumption that $g\in\cJ_{\omega}(\E),$ we obtain
\begin{eqnarray}\label{final3}
\frac{|g(z)-g(w)|}{\omega(|z-w|)}\leq \varepsilon,
\end{eqnarray}
if  $z,w\in\D$ are two distinct points sufficiently close to $\E,$ and satisfying $|z-w|\geq \frac{1}{2}\max\{1-|z|,1-|w|\}.$

In the second case, it can be assumed without loss of generality that  $\max\{1-|z|,1-|w|\}=1-|z|.$
Since $t\mapsto t/\omega(t)$ is nondecreasing, we
clearly have
\begin{eqnarray}\nonumber
\frac{|g(z)-g(w)|}{\omega(|z-w|)}\leq \Big(\sup_{|q-z|\leq \frac{1}{2}(1-|z|)}|g'(q)|\Big)\frac{|z-w|}{\omega(|z-w|)}
\\\label{final4}\leq\Big(\sup_{|q-z|\leq \frac{1}{2}(1-|z|)}|g'(q)|\Big)\frac{1-|z|}{\omega(1-|z|)}.
\end{eqnarray}
On the other hand, for a point $q\in\D$ satisfying $|q-z|\leq \frac{1}{2}(1-|z|),$  the classical Cauchy formula gives
\begin{eqnarray}\nonumber\label{final45}
g'(q)=\frac{1}{2i\pi}\int_{|p-z|= \frac{3}{4}(1-|z|)}\frac{f(p)-f(z)}{(p-q)^2}dp.
\end{eqnarray}
It follows
\begin{eqnarray}\label{final5}
|g'(q)|\leq \frac{1}{2\pi}\int_{|p-z|= \frac{3}{4}(1-|z|)}\frac{|f(p)-f(z)|}{|p-q|^2}|dp|.
\end{eqnarray}
Then
\begin{eqnarray}\label{final6}
|g'(q)|\leq \frac{12}{1-|z|}\sup_{|p-z|= \frac{3}{4}(1-|z|)}|f(p)-f(z)|.
\end{eqnarray}
Thus, by using \eqref{final3} and since $\omega$ is nondecreasing,
\begin{eqnarray}\label{final7}
|g'(q)|\leq \varepsilon\frac{\omega(1-|z|)}{1-|z|},
\end{eqnarray}
if  $z$ is sufficiently close to $\E.$
Therefore, by combining \eqref{final4} and \eqref{final7},
\begin{eqnarray}\label{final8}
\frac{|g(z)-g(w)|}{\omega(|z-w|)}\leq \varepsilon,
\end{eqnarray}
if  $z,w\in\D$ are two distinct points sufficiently close to $\E,$ and satisfying $|z-w|\leq \frac{1}{2}\max\{1-|z|,1-|w|\}.$

We now arrive to the conclusion that  $g\in\cK_{\omega}(\E),$ by joining together the estimates \eqref{final3} and \eqref{final8}. Hence  $\cJ_{\omega}(\E)\subseteq\cK_{\omega}(\E),$ which completes the proof of the proposition.
\end{proof}

For an inner function $U\in\cH^\infty$ we set
$$a_{U}(\xi):=\sum\limits_{n}\frac{1-|z_n|^2}{|\xi-z_n|^2}
+\frac{1}{\pi}\int_{\T}\frac{1}{|e^{i\theta}-\xi|^2}d\mu_{_{U}}(\theta),\qquad \xi\in\T\setminus\E_f,$$
where $z_n,$ $n\in\N,$ are the zeros of $U,$  each $z_n$ is repeated according to its multiplicity, and $\mu_{_U}$ is the positive singular measure associated with the singular factor of $U,$ with the understanding that $a_{U}=0$ if $U$ is constant.
In \cite{Shi3} Shirokov proved
\begin{equation}\label{shi}
\sup_{\zeta\in\T\setminus\E_f}
\frac{|f(\zeta)|}{\omega(\min\{1,a^{-1}_{U_f}(\zeta)\})}<+\infty,
\end{equation}
for every function $f\in\cL,$ see also \cite[Lemma B.5]{Bou1}. However, functions from the space $\cJ_{\omega}(\E)$ admit more  precise control than \eqref{shi};

\begin{lem}\label{forml}
Let $g\in \cJ_{\omega}(\E)$ be a function, where $\omega$ is an arbitrary modulus of continuity.
Then$$\lim_{\delta\rightarrow0}\sup_{\zeta\in\E(\delta)\setminus\E_g}
\frac{|g(\zeta)|}{\omega(\min\{1,a^{-1}_{U_g}(\zeta)\})}=0.$$
\end{lem}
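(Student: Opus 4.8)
The plan is to prove this as the little-oh refinement of Shirokov's estimate \eqref{shi}, following the scheme used to establish \eqref{shi} itself (as in \cite{Shi3} and \cite[Lemma B.5]{Bou1}) but feeding in the sharpened bound of Lemma \ref{form0} in place of the corresponding $O(\cdot)$-estimate. Fix $\zeta\in\T\setminus\E_g$, write $h=h(\zeta):=\min\{1,a^{-1}_{U_g}(\zeta)\}$, and for a depth parameter $t\in(0,h]$ consider the interior point $z=(1-t)\zeta\in\D$, so that $1-|z|=t$, $z/|z|=\zeta$ and $|\zeta-z|=t$. The starting inequality is
\begin{equation*}
|g(\zeta)|\leq |g(z)|+|g(\zeta)-g(z)|=|U_g(z)|\,|O_g(z)|+|g(\zeta)-g(z)|,
\end{equation*}
and the whole argument consists in estimating the three factors on the right when $\zeta$ is close to $\E$.

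First I would dispose of the easy regime. Fixing a threshold $\rho\in(0,1]$, for those $\zeta\in\E(\delta)$ with $h(\zeta)\geq\rho$ one has $\omega(h(\zeta))\geq\omega(\rho)$, whence $|g(\zeta)|/\omega(h(\zeta))\leq \omega(\rho)^{-1}\sup_{\xi\in\E(\delta)}|g(\xi)|$, which tends to $0$ as $\delta\to0$ because $g$ is continuous on $\overline{\D}$ and vanishes on $\E$. Thus only the regime $h(\zeta)<\rho$, i.e. $a_{U_g}(\zeta)>1/\rho$, is substantial; there $h=a^{-1}_{U_g}(\zeta)$.

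For this regime I would take $t=\lambda h$ with a large constant $\lambda\geq1$ fixed later. The outer factor is controlled by Lemma \ref{form0}: since $|O_g|=|g|$ on $\T$ and $|g(\xi)|\leq |g(\xi)-g(\zeta)|+|g(\zeta)|$, monotonicity of the Poisson integral gives, with $\nu=|g(\zeta)|$,
\begin{equation*}
|O_g(z)|\leq \exp\Big\{\frac{1}{2\pi}\int_{\T}\frac{1-|z|^2}{|\xi-z|^2}\log\big(|g(\xi)-g(\zeta)|+\nu\big)|d\xi|\Big\}\leq o(\omega(t))+A|g(\zeta)|
\end{equation*}
as $d(z,\E)\to0$, where $A\geq1$ is the constant of Lemma \ref{form0}. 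The inner factor is controlled by the standard pointwise estimate for inner functions: using $|1-\overline{z_n}\zeta|=|\zeta-z_n|$ and the fact that $\zeta\notin\sigma(U_g)\cap\T$ leaves a fixed neighbourhood of $\zeta$ free of the zeros $z_n$ and of the support of $\mu_{U_g}$, one obtains $-\log|U_g(z)|\geq c_0\,t\,a_{U_g}(\zeta)=c_0\lambda$ for an absolute $c_0>0$ once $t$ is small, hence $|U_g(z)|\leq e^{-c_0\lambda}=:\theta_\lambda$. The Lipschitz term is $o(\omega(t))$: since $z,\zeta$ both lie within $\delta+\lambda\rho$ of $\E$ and $\cJ_\omega(\E)=\cK_\omega(\E)$ by Proposition \ref{coincide}, the quotient $|g(\zeta)-g(z)|/\omega(|\zeta-z|)$ is uniformly small. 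Collecting the three bounds and using $\omega(\lambda h)\lesssim_\lambda\omega(h)$ (a consequence of \eqref{omega}) yields
\begin{equation*}
|g(\zeta)|\leq \theta_\lambda A\,|g(\zeta)|+o(\omega(h)),
\end{equation*}
and choosing $\lambda$ so large that $\theta_\lambda A\leq\tfrac12$ lets me absorb the first term, giving $|g(\zeta)|=o(\omega(h))$ uniformly over the regime $h<\rho$ as $\delta+\lambda\rho\to0$.

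The two regimes are then combined by the usual $\varepsilon$-bookkeeping: first fix $\lambda$ from the inner estimate, then choose $\rho$ small enough that the regime $h<\rho$ contributes at most $\varepsilon$ once $\delta$ is small, and finally shrink $\delta$ so that the continuity bound in the regime $h\geq\rho$ also falls below $\varepsilon$. The main obstacle I anticipate is the inner-function estimate together with its uniformity: one must secure the geometric decay $|U_g(z)|\leq e^{-c_0\lambda}$ at depth $t=\lambda h$ with a constant $c_0$ independent of $\zeta$, while simultaneously keeping $z$ close enough to $\E$ for the $o(\cdot)$ terms of Lemma \ref{form0} and of Proposition \ref{coincide} to be effective. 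Reconciling ``deep enough to make $\theta_\lambda A<1$'' with ``close enough to $\E$'' is precisely the delicate point, and it is what forces the two-parameter ($\lambda$ first, then $\rho$ and $\delta$) structure of the argument.
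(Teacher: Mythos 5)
Your architecture is the right one and matches the paper's main case: compare $|g(\zeta)|$ with $|g(z)|$ at an interior point of depth comparable to $a^{-1}_{U_g}(\zeta)$, bound the outer factor by Lemma \ref{form0} with $\nu=|g(\zeta)|$, make the inner factor uniformly small, and absorb the term $A|g(\zeta)|$. But there is a genuine gap at exactly the step you flag as delicate: the bound $-\log|U_g(z)|\geq c_0\,t\,a_{U_g}(\zeta)$ at depth $t=\lambda a^{-1}_{U_g}(\zeta)$ is false in general. The pointwise comparison behind it needs $t\lesssim d\big(\zeta,\Z_{U_g}\cup\mathrm{supp}\,\mu_{U_g}\big)$, and that distance can be far smaller than $a^{-1}_{U_g}(\zeta)$. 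Concretely, let $U_g$ have a single zero $z_0$ with $1-|z_0|=\eta$ and take $|\zeta-z_0|=\epsilon$ with $\eta\ll\epsilon\ll\sqrt{\eta/\lambda}$ (say $\epsilon=\eta^{2/3}$): then $a_{U_g}(\zeta)\approx 2\eta/\epsilon^2$, so $t=\lambda a^{-1}_{U_g}(\zeta)\approx\lambda\epsilon^2/(2\eta)\gg\epsilon$ is an admissible depth, yet at $z=(1-t)\zeta$ one has $-\log|U_g(z)|\approx\eta/t\approx\eta^{2/3}\to 0$, i.e. $|U_g(z)|\to1$ rather than $|U_g(z)|\leq e^{-c_0\lambda}$; a small singular mass close to $\zeta$ produces the same failure. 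Your justification, that ``$\zeta\notin\sigma(U_g)\cap\T$ leaves a fixed neighbourhood free of the zeros and of $\mathrm{supp}\,\mu_{U_g}$,'' conflates positive distance for each fixed $\zeta$ with a distance bounded below on the scale $a^{-1}_{U_g}(\zeta)$; and no ordering of the parameters $\lambda,\rho,\delta$ can repair this, because the obstruction is the geometry of $\sigma(U_g)$ near $\zeta$, not the bookkeeping.

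The missing idea, which is how the paper proceeds, is a dichotomy in $\zeta$. Case A: $d(\zeta,\Z_g)\leq 8A\,a^{-1}_{U_g}(\zeta)$. Here one never touches the inner factor: $g$ vanishes at the nearby zero $w\in\Z_g$, both $\zeta$ and $w$ are close to $\E$, so Proposition \ref{coincide} gives $|g(\zeta)|=|g(\zeta)-g(w)|\leq\varepsilon\,\omega\big(d(\zeta,\Z_g)\big)\leq 8A\varepsilon\,\omega\big(a^{-1}_{U_g}(\zeta)\big)$ --- and this is precisely the regime of the counterexample above, where your absorption scheme collapses. Case B: $d(\zeta,\Z_g)\geq 8A\,a^{-1}_{U_g}(\zeta)$. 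Only now is the depth $1-\rho_\zeta=8A\,a^{-1}_{U_g}(\zeta)$ dominated by the distance to the zeros, so the estimate $|U_g(\rho_\zeta\zeta)|\leq\exp\{-\tfrac{1-\rho_\zeta}{8}a_{U_g}(\zeta)\}=e^{-A}$ of \cite[Lemma B.4]{Bou1} applies, and the conclusion follows by the same absorption you propose (the paper absorbs with the fixed constant $Ae^{-A}<1$ instead of choosing $\lambda$ large, which is immaterial). In short, your proposal reproduces Case B of the paper's proof but omits Case A entirely, and Case A is not a technicality: it is the only mechanism available when $\sigma(U_g)$ approaches $\zeta$ faster than $a^{-1}_{U_g}(\zeta)$.
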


\begin{proof}
We have
\begin{eqnarray}
&&|O_{g}(z)|\nonumber
\\\nonumber&=&\exp\Big\{\frac{1}{2\pi}\int_{\xi\in\T}\frac{1-|z|^2}{|\xi-z|^2}\log|g(\xi)||d\xi|\Big\}
\\\label{matin11}&\leq&\exp\Big\{
\frac{1}{2\pi}\int_{\xi\in\T}\frac{1-|z|^2}{|\xi-z|^2}\log\big(|g(\xi)-g(z/|z|)|+|g(z/|z|)|\big)|d\xi|\Big\},\quad z\in\D.
\end{eqnarray}
Then, using Lemma \ref{form0} with $\nu=|g(z/|z|)|,$
\begin{eqnarray}\label{matin22}
|O_{g}(z)|
\leq o(\omega(1-|z|))+A|g(z/|z|)|,\qquad\text{as } d(z,\E)\rightarrow 0.
\end{eqnarray}
From Proposition \ref{coincide},
\begin{eqnarray}\label{matin42}
|g(z)-g(\zeta)|= o(\omega(|z-\zeta|)),\qquad\text{as } d(z,\E),d(\zeta,\E)\rightarrow 0.
\end{eqnarray}
We let $\varepsilon\leq1$ be a positive number. Using  \eqref{matin22} and \eqref{matin42}, there exists a positive number $\delta_\varepsilon$ such that
\begin{equation}\label{equation1}
|O_{g}(z)|\leq \varepsilon\omega(|1-|z|) + A|g(z/|z|)|,
\end{equation}
and
\begin{equation}\label{equation2}
|g(z)-g(\zeta)|\leq \varepsilon\omega(|z-\zeta|),
\end{equation}
whenever $1-|z|\leq\delta_\varepsilon$, $d(z/|z|,\E)\leq\delta_\varepsilon$
and $d(\zeta,\E)\leq\delta_\varepsilon.$

Let $\zeta\in\E(\delta_\varepsilon)\setminus\E_g$ be  a point satisfying $$8Aa^{-1}_{U_g}(\zeta)< 1.$$ Two cases are possible;

\textit{Case A. } We assume that $\displaystyle d(\zeta, \Z_{g})\leq 8Aa^{-1}_{U_g}(\zeta).$ Using \eqref{equation2}
 $$\displaystyle|g(\zeta)|\leq \varepsilon\omega(d(\zeta, \Z_{g})).$$
Then
 $$\displaystyle|g(\zeta)|\leq\varepsilon \omega(8Aa^{-1}_{U_g}(\zeta))\leq 8A\varepsilon\omega(a^{-1}_{U_g}(\zeta)).$$

\textit{Case B.} We now assume that $\displaystyle d(\zeta, \Z_{g})\geq 8Aa^{-1}_{U_g}(\zeta).$
 Then $1-\rho_{_\zeta}\leq d(\zeta, \Z_{g}),$ where $$\displaystyle \rho_{_\zeta}:=1-8Aa^{-1}_{U_g}(\zeta).$$
 We have
\begin{equation}\label{equation3}
|U_{g}(\rho_{_\zeta} \zeta)|\leq\exp\big\{-\frac{1-\rho_{_\zeta}}{8}a_{U_g}(\zeta)\big\} =e^{-A},
\end{equation}
 as it is already computed in \cite[Lemma B.4]{Bou1}. We observe that
 $$1-\rho_{_\zeta}\leq d(\zeta, \Z_{g})\leq d(\zeta, \E_{g})\leq d(\zeta, \E)\leq \delta_\varepsilon.$$
From \eqref{equation1} and \eqref{equation3}
\begin{equation}\label{equation4}
|g(\rho_{_\zeta} \zeta)|=|U_g(\rho_{_\zeta} \zeta)||O_g(\rho_{_\zeta} \zeta)|\leq Ae^{-A}\big(\varepsilon\ \omega(1-\rho_{_\zeta})+|g(\zeta)|\big).
\end{equation}
Therefore
\begin{eqnarray*} |g(\zeta)|&\leq& |g(\zeta)-g(\rho_{_\zeta} \zeta)|+|g(\rho_{_\zeta} \zeta)|\\&\leq&
\varepsilon\ \omega(1-\rho_{_\zeta})+ Ae^{-A}\big(\varepsilon\ \omega(1-\rho_{_\zeta})+|g(\zeta)|\big).
\end{eqnarray*}
Hence
\begin{eqnarray*}
|g(\zeta)|&\leq&\varepsilon(1-Ae^{-A})^{-1}(1+Ae^{-A})\omega(1-\rho_{_\zeta})
\\&\leq& 3\varepsilon\ \omega(1-\rho_{_\zeta})
\\&\leq& 24 A\varepsilon\ \omega(a^{-1}_{U_g}(\zeta)).
\end{eqnarray*}
The lemma follows by joining together the results of the above two cases.
\end{proof}

\subsection{Proof of Theorem \ref{main2}\label{sect52}}

Let $g\in\cL$ be a function and $V\in\cH^{\infty}(\D)$ be an inner function dividing $g.$
We have $O_{g}\in \cL$ and $VO_{g}\in \cL,$ by the F-property of $\cL.$ Now, we suppose additionally  that  $g\in \cJ_{\omega}(\E).$
We will first show that
\begin{equation}\label{souklarba3}
\lim\limits_{\delta\rightarrow 0}\sup\limits_{ \xi,\zeta\in\E(\delta)\setminus\E_g\atop \xi\neq\zeta} \Big|g(\zeta)\frac{V(\xi)-V(\zeta)}{\omega(|\xi-\zeta|)}\Big|=0,
\end{equation}
uniformly with respect to any inner function $V$ dividing $U_g.$
Let $\xi,\zeta\in\T\setminus\E_g$ be two distinct points. We have two possibilities;

\textit{Case A.} We suppose that $\displaystyle|\xi-\zeta|\geq \frac{1}{2}d(\zeta,\Z_g).$  Since $g\in \cJ_{\omega}(\E),$ then
\begin{equation}\label{souk3}
\frac{|g(\zeta)|}{\omega(d(\zeta,\Z_g))}=o(1),\qquad \mbox{as }d(\zeta, \E)\rightarrow 0.
\end{equation}
by using \eqref{matin42}.
It follows
\begin{equation}\label{souk4}
\Big|g(\zeta)\frac{V(\xi)-V(\zeta)}{\omega(|\xi-\zeta|)}\Big|\leq 2
\frac{|g(\zeta)|}{\omega(d(\zeta,\Z_g))}=o(1),\qquad \mbox{as }d(\zeta, \E)\rightarrow 0.
\end{equation}

\textit{Case B.} We now suppose that $\displaystyle|\xi-\zeta|\leq \frac{1}{2}d(\zeta,\Z_g).$  Let  $(\xi,\zeta)\subset
\T\setminus \E_{g}$ be the arc joining the points $\xi$ and $\zeta.$ We have
\begin{eqnarray}\nonumber
            \frac{1}{2}|\zeta-w|\leq|z-w|\leq \frac{3}{2}|\zeta-w|,                \qquad z\in(\xi,\zeta)\text{ and }w\in\Z_g.
\end{eqnarray}
Since
\begin{eqnarray}\nonumber
\frac{|V(\xi)-V(\zeta)|}{|\xi-\zeta|}&\lesssim& \sup_{z\in(\xi,\zeta)} |V'(z)|
\\\nonumber&\lesssim& \sup_{z\in(\xi,\zeta)}\Big\{\sum\limits_{n}\frac{1-|z_n|^2}{|z-z_n|^2}
+\frac{1}{\pi}\int_{\T}\frac{1}{|e^{i\theta}-z|^2}d\mu_{_{U_g}}(\theta) \Big\},
\end{eqnarray}
where $\{z_n\ :\ n\in\N\}$ is the zero set of $g$ in $\D$ and $\mu_{_{U_g}}$ is the positive singular measure associated with the singular factor of $U_g.$
Then
\begin{eqnarray}\label{souk5}
\frac{|V(\xi)-V(\zeta)|}{|\xi-\zeta|}\lesssim a_{U_g}(\zeta).
\end{eqnarray}

\textit{  Case B.1.} We assume that  $\displaystyle\displaystyle |\xi-\zeta|\leq a^{-1}_{U_g}(\zeta).$
Since $t\mapsto t/\omega(t)$ is nondecreasing, and by applying Lemma \ref{forml}, we have
\begin{equation}\label{souk55}
|g(\zeta)|a_{U_g}(\zeta)\frac{|\xi-\zeta|}{\omega(|\xi-\zeta|)}\leq \frac{|g(\zeta)|}{\omega(\min\{1,a^{-1}_{U_g}(\zeta)\})}=
o(1),\qquad \mbox{as } d(\zeta, \E)\rightarrow 0.
\end{equation}
Using \eqref{souk5} and \eqref{souk55},
\begin{eqnarray}\nonumber
\Big|g(\zeta)\frac{V(\xi)-V(\zeta)}{\omega(|\xi-\zeta|)}\Big|&=& |g(\zeta)|\frac{|V(\xi)-V(\zeta)|}{|\xi-\zeta|}\frac{|\xi-\zeta|}{\omega(|\xi-\zeta|)}
\\&=&\label{souk6} o(1),\qquad \mbox{as } d(\zeta, \E)\rightarrow 0.
\end{eqnarray}

\textit{  Case B.2.}  We assume that  $\displaystyle\displaystyle |\xi-\zeta|\geq a^{-1}_{U_g}(\zeta).$ Then, by  applying once more Lemma \ref{forml},
\begin{eqnarray}\nonumber
\Big|g(\zeta)\frac{V(\xi)-V(\zeta)}{\omega(|\xi-\zeta|)}\Big|&\leq&
2\frac{|g(\zeta)|}{\omega(\min\{1,a^{-1}_{U_g}(\zeta)\})}
\\&=&\label{souk7} o(1), \qquad \mbox{as }d(\zeta, \E)\rightarrow 0.
\end{eqnarray}
Hence, the equality \eqref{souklarba3} is deduced by combining \eqref{souk4}, \eqref{souk6} and \eqref{souk7}.
We now clearly have
\begin{equation}\label{gat1}
\frac{VO_g(\xi)-VO_g(\zeta)}{\omega(|\xi-\zeta|)}
=V(\xi)\frac{O_g(\xi)-O_g(\zeta)}{\omega(|\xi-\zeta|)}
+O_{g}(\zeta)\frac{V(\xi)-V(\zeta)}{\omega(|\xi-\zeta|)},
\end{equation}
for all distinct points   $\xi,\zeta\in\T\setminus\E_g.$
By considering the particular case $V=U_g$ in the equality \eqref{gat1}, we obtain
\begin{eqnarray}\label{gat2}
\Big|\frac{O_g(\xi)-O_g(\zeta)}{\omega(|\xi-\zeta|)}\Big|
\leq\Big|\frac{g(\xi)-g(\zeta)}{\omega(|\xi-\zeta|)}\Big|+\Big|g(\zeta)\frac{U_g(\xi)-U_g(\zeta)}{\omega(|\xi-\zeta|)}\Big|.
\end{eqnarray}
Thus
\begin{eqnarray}\nonumber
&&\Big|\frac{VO_g(\xi)-VO_g(\zeta)}{\omega(|\xi-\zeta|)}\Big|
\\\label{gat3}&\leq&\Big|\frac{g(\xi)-g(\zeta)}{\omega(|\xi-\zeta|)}\Big|+\Big|g(\zeta)\frac{U_g(\xi)-U_g(\zeta)}{\omega(|\xi-\zeta|)}\Big|
+\Big|g(\zeta)\frac{V(\xi)-V(\zeta)}{\omega(|\xi-\zeta|)}\Big|,
\end{eqnarray}
for all distinct points   $\xi,\zeta\in\T\setminus\E_g.$ Therefore $VO_g\in\cJ_{\omega}(\E),$
since by hypothesis $g\in\cJ_{\omega}(\E),$ and combining the estimates \eqref{souklarba3} and \eqref{gat3}.
Which finishes the proof of \eqref{fprop1}. In particular we deduce that $O_g\in\cJ_{\omega}(\E).$

We now let
$\{V_n\ :\ n\in\N\}\subset\cH^{\infty}$ be a sequence of inner functions dividing $g,$ satisfying \eqref{fprop1.5} and such that $\sigma(V_n)\cap\T\subseteq\E,$ for every $n\in\N.$
Then $\sigma(V)\cap\T\subseteq\E.$
The equality \eqref{souklarba3} gives the following one,
\begin{equation}\label{sam}
\lim_{\delta\rightarrow 0}\sup_{\xi,\zeta\in\E(\delta)\setminus\E\atop
\xi\neq\zeta}\Big|O_g(\zeta)\frac{h_n(\xi)-h_n(\zeta)}{\omega(|\xi-\zeta|)}\Big|=0,
\end{equation}
uniformly with respect to $n\in\N,$  where $h_n:=V_n-V.$ Tacking account of \eqref{sam} and the fact that $O_g\in\cJ_{\omega}(\E),$ the equality \eqref{fprop2} is deduced by applying Lemma \ref{hilbert} with $\Delta_n=\T\setminus\E,$ for every $n\in\N.$ This completes the proof of the theorem.

\end{document}